\documentclass[a4paper,10pt]{article}
\usepackage{amsfonts}
\usepackage{enumitem}
\usepackage{amsthm}
\usepackage{nccmath}
\usepackage{amsmath}
\usepackage{amssymb}
\usepackage{setspace}
\parindent 0pt

\advance\textwidth by 50pt
\advance\oddsidemargin by -25pt
\advance\evensidemargin by -25pt

\newtheorem{lemma}{Lemma}[section]
\newtheorem{proposition}[lemma]{Proposition}
\newtheorem{theorem}[lemma]{Theorem}
\newtheorem{definition}{Definition}[section]
\newtheorem{principle}{Principle}[section]
\newtheorem{pbm}{Problem}[section]
\newtheorem{corollary}{Corollary}[section]
\theoremstyle{definition}
\newtheorem{remark}[lemma]{Remark}

\newcommand{\be}{\begin{equation}}
\newcommand{\ee}{\end{equation}}
\newcommand{\pfreem}{p_{\textrm{\tiny{s}}}}

\newcommand{\tildh}{\tilde{h}}

\newcommand{\Freem}{\mathcal{H}}
\newcommand{\pFreem}{\mathcal{H}\p}
\newcommand{\optfreemin}{\sigma_{\bar h}}
\newcommand{\pfreemin}{\overline{p}_\textrm{\tiny{s}}}
\newcommand{\freemin}{\overline{h}}
\newcommand{\freem}{h}
\newcommand{\nut}{\tilde{\nu}}
\newcommand{\R}{\mathbb{R}}
\newcommand{\optFreem}{\mathcal{P}_{ac}(\mathbb{R}^3)}
\newcommand{\optfreem}{\sigma}

\newcommand{\pr}{p_\textrm{\scriptsize{ref}}}

\newcommand{\ps}{p_\textrm{\tiny{s}}}
\newcommand{\s}{_\textrm{\tiny{s}}}
\newcommand{\p}{_\textrm{\tiny{p}}}

\newcommand{\renumerate}[1]{\begin{enumerate}[label=(\textit{\roman{*}}), ref=\textit{(\roman{*})}] #1 \end{enumerate}}
\newcommand{\push}{\textrm{\#}}

\newcommand{\bF}{\mathbf F}
\newcommand{\bx}{\mathbf x}
\newcommand{\by}{\mathbf y}
\newcommand{\bZ}{\mathbf Z}
\renewcommand{\geq}{\geqslant}                                     
\renewcommand{\leq}{\leqslant}                                     
\newcommand{\fc}{f_\textrm{\scriptsize{cor}}} 
\renewcommand{\gg}{g_\textrm{\scriptsize{grav}}} 

\begin{document}

\vspace{3mm}
\begin{center}
\Large
{\bf Free upper boundary value problems for the semi-geostrophic equations}

\vspace{3mm}

\large
M.J.P. Cullen$^{*}$, D.K. Gilbert, T. Kuna  and B. Pelloni

\vspace{5mm}
{\em 
Department of Mathematics and Statistics,

University of Reading, 

Reading RG6 6AX, UK}

\vspace{2mm}
$^{*}$ also {\em Met Office, Fitzroy Road, 

Exeter EX1 3PB, UK}

\vspace{3mm}
\today
\end{center}
\normalsize

\begin{abstract}
The  semi-geostrophic system 
 is widely used in the modelling of large-scale atmospheric flows. In this paper, we prove existence of solutions of the incompressible semi-geostrophic equations in a fully three-dimensional domain with a free upper boundary condition.  
 The main structure of the proof follows the pioneering work of Benamou and Brenier  \cite{benamou}, who analysed the same system but with a rigid boundary condition. However, there are very significant new elements required in our proof of the existence of solutions  for the incompressible free boundary problem. The proof uses on optimal transport results as well as the analysis of  Hamiltonian ODEs in spaces of probability measures given by Ambrosio and Gangbo \cite{ambgangbo}. We also show how these techniques can be modified to yield the analogous result for the compressible version of the system.
\end{abstract}

\section{Introduction}\label{introduction}
The fully compressible semi-geostrophic system, posed in a domain of the form $[0,\tau)\times\Omega$, with $\Omega\subset \R^3$ a bounded subset of  the physical space,
is the following system of equations:
\begin{eqnarray}
&&\label{commom}
D_t \mathbf u^g + f_\textrm{\scriptsize{cor}} \mathbf e _3 \times \mathbf u + \nabla \phi + \frac{1}{\rho } \nabla p = 0,\\
&&\label{comad}
D_t \theta = 0,\\
&&\label{comcont}
D_t \frac{1}{\rho } = \frac{1}{\rho }\nabla \cdot \mathbf u,\\
&&\label{comgeo}
f_\textrm{\scriptsize{cor}} \mathbf e _3 \times \mathbf u^g + \nabla \phi + \frac{1}{\rho } \nabla p = 0,\\
&&\label{comstate}
p = R\rho \theta \left (\frac{p}{p_\textrm{\scriptsize{ref}}}\right )^{\frac{\kappa -1}{\kappa }},
\end{eqnarray}
where $D_t$ denotes the lagrangian derivative operator:
 \be
D_t=\partial_t+\mathbf u\cdot\nabla
\label{lagrder}\ee

The unknowns in the above equations are $\mathbf u^g = (u^g_1, u^g_2, 0)$, $\mathbf u = (u_1, u_2, u_3)$, $p$, $\rho $, $\theta$; we assume $R$, $\fc$ and $p_\textrm{\scriptsize{ref}}$ constant, and indeed we will assume $\fc=1$ in what follows.  We also assume $\Phi(\bx)=g_{grav} x_3$. The physical significance of each variable is given in the Appendix.   

\smallskip
This system is obtained as an approximation to the laws of thermodynamics and to the compressible Navier-Stokes equations, the fundamental equations that describe the behaviour of the atmosphere, or more precisely the version obtained when viscosity is neglected, known as the Euler equations. The particular approximation made in the derivation of the semi-geostrophic system is  valid on scales where the effects of rotation dominate the flow. In this case, the effect of the Coriolis and of the pressure gradient force are  balanced, and equation (\ref{comgeo}) is precisely a formulation of hydrostatic and geostrophic balance.  
The remaining equations formulate other physical properties:  (\ref{commom}) is the momentum equation; (\ref{comad}) represents the adiabatic assumption; (\ref{comcont}) is the continuity equation and (\ref{comstate}) is the equation of state which relates the thermodynamic quantities to each other.

The semi-geostrophic system was first introduced by Eliassen \cite{eliassen} and then rediscovered by Hoskins \cite{hoskins}. It admits more singular behaviour in the solutions than other reductions with a simpler mathematical structure, such as the quasi-geostrophic system, and for this reason this system been used in particular to describe the formation of atmospheric fronts. 

\smallskip
For an accurate representation of  the behaviour of large-scale atmospheric flow, one should consider the fully compressible semi-geostrophic equations with variable Coriolis parameter and a free upper boundary condition. The complexity of this problem means that so far results have only been obtained after relaxing one or more of these conditions. We give  a brief summary of these results.

In \cite{benamou}, Benamou and Brenier assumed the fluid to be  incompressible, the Coriolis parameter  constant and the boundaries rigid. The problem  they considered, written in dimensionless scalar form, is posed in a fixed domain $\Omega\subset \R^3$ and  given by
\be
\left\{\begin{array}{ll}
D_tu_1^g -  u_2 + \frac{\partial p}{\partial x_1} = 0,&\\ 
D_tu_2^g +  u_1 +\frac{\partial p}{\partial x_2} = 0,&\\
D_t\rho = 0,& (t,x)\in [0, \tau ) \times \Omega\\ 
\nabla \cdot \mathbf u = 0,&\\ 
\frac{\partial p}{\partial x_1}=u_2^g ,\quad \frac{\partial p}{\partial x_2}=-u_1^g,\quad \frac{\partial p}{\partial x_3} = -\rho.&
\end{array}\right.
\label{sginc}
\ee
The equations are to be solved subject  to  appropriate initial conditions, and the rigid boundary conditions
\be\label{bbbound}
\mathbf u \cdot \mathbf n = 0 \qquad\qquad \qquad  \qquad\qquad  \qquad (t,x)\in [0, \tau ) \times \partial \Omega ,
\ee
where 
$\partial \Omega $ represents the boundary of $\Omega $ and $\mathbf n$ is the outward unit normal to $\partial \Omega $.  

Using a change of variables, first introduced by Hoskins in \cite{hoskins}, one derives the so-called {\em dual formulation} of the system, that elucidates the Hamiltonian structure of the problem. Indeed, in this formulation,  the equations are interpreted as a Monge-Amp\`{e}re equation coupled with a transport problem, and this elegant interpretation yields the proof of the existence of  weak solutions of the system in dual space, based on the groundbreaking work of Brenier \cite{brenier}. 

This result was generalised in \cite{maroofi} to prove existence of weak solutions for the 3-dimensional compressible system (\ref{commom})-(\ref{comstate}),   still assuming a fixed boundary and a rigid boundary condition.  

In \cite{gangbo}, Cullen and Gangbo relaxed the assumption of rigid boundaries assuming a more physically appropriate free boundary condition.  However, they made the  additional assumption of a constant potential temperature, and thus obtained a 2-D system, known as the semi-geostrophic shallow water system, posed on a fixed two-dimensional domain. After passing to dual variables, they showed existence of weak solutions of the resulting dual problem.

The above results were obtained for the dual space formulation of the equations, which is the setting we also consider in the present paper. However, we mention for completeness more recent results regarding  the existence of solutions in the original physical variables. The first step in this direction was taken by Cullen and Feldman, who proved in \cite{feldman}  the existence of Lagrangian solutions in physical variables, a result that was extended in  \cite{CGP1} to the compressible system.  Recently, Ambrosio et al have succedeed in proving  existence of solutions for the  Eulerian formulation, in cases when there are no boundary effects  \cite{ambnew,ambnew2}.

\smallskip
In this paper, we extend the results above to prove the existence of dual-space solutions for the {\em incompressible system, in three-dimensional space, in a domain with a free upper boundary}. This result is stated in Theorem \ref{new5.5} , and is a direct but substantial extension of the results of Cullen and Gangbo. The proof differs from the one given in \cite{gangbo} also in its use of the approach introduced in \cite{CGP1}, namely it exploits the general theory of  Hamiltonian ODEs  in spaces of probability measures given in \cite{ambgangbo}. The strategy of the proof is to show that the Hamiltonian of the system, given by the dual energy, satisfies the necessary conditions to invoke the general theory of \cite{ambgangbo}, and that its superdifferential coincides precisely with the dual velocity of the flow. This, coupled with the existence of the optimal transport map for the given cost function,  yields the desired result.
We also sketch the extension of this proof to the compressible case. Namely, by writing the equations in pressure coordinates, we extend the result of  \cite{maroofi}, who considered the compressible equations but assumed rigid boundary conditions,  to the more physically relevant case of free boundary conditions. 
 
We mention that recently Caffarelli and McCann \cite{caffarelli2010free} have developed extensively a general theory of optimal transport in domains with free boundaries.  It would be interesting to verify whether these general results can be used to give an alternative proof of the problem considered here.

\smallskip
The paper is organised as follows:

In Section 2, we summarise the results of Benamou and Brenier on the solution of the incompressible 3-D system in dual space, with rigid boundary conditions. The proof of this result sets the strategy for all generalisations, and we highlight how our approach differs from this.

In Section 3, we consider the same problem but assume a more realistic free boundary condition on the top boundary (the surface of the fluid). We first summarise the results for the 2-D case obtained by Cullen and Gangbo, then give the proof for the 3-D case.  This is the main result of this paper. 

In Section 4, we extend the results to the compressible system.  In view of the fact that, in pressure coordinates, the two problems are formally identical, this extension does not introduce any new element.

In the Appendix, we list various definitions and the notation we use throughout, as well as some general results in the theory of optimal transport and Hamiltonian flows that we appeal to in the proof of our results.

\section{The incompressible semi-geostrophic system in a fixed domain}
\setcounter{equation}{0}

We start by  describing the strategy common to proving the existence of solutions, in a particular set of coordinates, in all cases we examine. The original approach is due to  Benamou and Brenier \cite{benamou}. 

Let $\Omega\subset \R^3$ be a fixed bounded domain, and $\tau>0$ a fixed constant.  Consider the system of equations  (\ref{sginc}), with suitable prescribed initial conditions and the rigid boundary conditions given by (\ref{bbbound}).
%

\medskip 
The geostrophic energy,  which is conserved by the flow, is given by 
\begin{equation}\label{bbenergy}
E = \int_\Omega \! \bigg( \frac{1}{2}((u_1^g)^2 + (u_2^g)^2) + \rho x_3 \bigg)\, d\mathbf x.
\end{equation}

An important physical property of the flow described by the semigeostrophic approximation  is summarised in the following fundamental principle. 

\begin{principle}[Cullen's stability principle]
Stable solutions of (\ref{sginc})-(\ref{bbbound}) correspond to solutions that, at each fixed time $t$,  minimise the energy $E$ given by (\ref{bbenergy}) with respect to the rearrangements of particles, in physical space, that conserve the absolute momentum $(u_1^g-x_2, u^g_2+x_1)$ and the density $\rho$.
\label{princ1}\end{principle}
This was expressed in  \cite{shutts} as the requirement that states corresponding to critical points of (\ref{bbenergy}) with respect to such rearrangements of particles in physical space are states in hydrostatic and geostrophic balance. The evolution of states that are critical points of the energy but not minima cannot be described by the semi-geostrophic approximation \cite{cullen}.

The significance of Brenier's work is in the elucidation of the precise mathematical meaning of this minimisation principle, and its mathematical formulation in the framework of convex analysis and optimal transport theory. This machinery can be used after a change of variables, introduced by Hoskins \cite{hoskins} and motivated by physical considerations. In these variables,  the problem is formulated mathematically in Hamiltonian form, and the  time evolution of the velocity is expressed explicitly. 

\subsubsection*{Formulation in dual variables}
The  change to dual coordinates $\mathbf y = \mathbf T(t, \mathbf x)$ is defined by
\begin{equation}\label{bbgeovariables}
 \mathbf T: \Omega\to \R^3: \qquad T_1(\bx) = x_1 + u_2^g, \quad T_2(\bx) = x_2 - u_1^g, \quad T_3(\bx) = -\rho .
\end{equation}
Note that (\ref{sginc}) implies 
\[ (y_1 - x_1, y_2 - x_2, y_3) = \nabla p .\]
The energy functional (\ref{bbenergy}) is formulated in dual variables as 
\begin{equation}\label{bbenergyrewrite}
E(t,\bx,\mathbf T) = \int_\Omega \! \bigg( \frac{1}{2}\{ |x_1 - {T}_1(\mathbf x)|^2 + |x_2 - {T}_2(\mathbf x)|^2 \} - x_3{T}_3(\mathbf x) \bigg) \, d\mathbf x.
\end{equation}
The geostrophic coordinates are related to Cullen's stability principle through  the so-called {\em geopotential} $P(t, \mathbf x)$, defined as
\begin{equation}\label{bbP}
P(t, \mathbf x) = \frac{1}{2}(x_1^2 + x_2^2) + p(t, \mathbf x).
\end{equation}
One can perform a formal variational computation, with respect to variations $\varphi$ of particle position  satisfying the incompressibility constraint  $\nabla\cdot\varphi=0$ and that conserve absolute momentum so that $u_1^g-\varphi_2=u_2^g+\varphi_1=0$. This computation 
indicates that, for the energy   in (\ref{bbenergyrewrite}) to be stationary, it must hold that $\mathbf T(\bx) = \nabla P$, and  that  the condition for the energy to be minimised is that $D^2P$ is positive definite, where $D^2$ is the Hessian.  Positive definiteness of $D^2P$ implies that $P$ is convex, see \cite{cullen, purser, dkgthesis, shutts}.   Hence the stability principle can be formulated as a convexity principle.

\begin{principle}[Cullen's convexity principle]
Minima of the energy (\ref{bbenergy}), with respect to variations as in Principle \ref{princ1},  correspond to a  geopotential $P(t,\bx)$, as given by (\ref{bbP}), which is a convex function of $\bx$. 
\end{principle}

We can now express the dual formulation in the language of optimal transport theory,  \cite{ambbook, villanioldnew}. 

\begin{definition}\label{potden}
The {\em potential density} $\nu(t,\bx)\in{\cal P}([0,\tau)\times\Omega)$ associated to the system (\ref{sginc}) is the push forward of the Lebesgue measure of the domain $\Omega$ through the map $ \mathbf T $  given by (\ref{bbgeovariables}):
\begin{equation}
\nu = \mathbf T \push \chi _\Omega. \end{equation}
This means that  the measure $\nu$ is defined by
$$
\nu(B)=|\mathbf T^{-1}(B)|, \quad \forall\;B\subset \R^3 \; Borel\; set,
$$
and satisfies the change of variable formula
$$
 \int_{\Omega}   f(\mathbf T(\bx ))d\bx= \int_{\R^3}   f(\by)d\nu(\by)\qquad \forall f\in{\bf C}_c(\R^3).
$$
\end{definition}
We can now rephrase Cullen's stability principle as the requirement that $\mathbf T$ which minimises (\ref {bbenergyrewrite}) is the {\em optimal map} in the transport of $\chi _\Omega $ to $\nu $ with respect to the cost function $c(\mathbf x, \mathbf y)$ given by
\begin{equation}\label{bbcost}
c(\mathbf x, \mathbf y) = \frac{1}{2}\{ |x_1 - y_1|^2 + |x_2 - y_2|^2 \} - x_3y_3.
\end{equation}
Brenier's polar factorization theorem \cite{brenier} ensures the existence of a unique such optimal map, and guarantees that this optimal map, for each fixed time $t$, is of the form $\mathbf T = \nabla P$ with $P$ a convex function of the space variable $\bx$.  

Hence defining $\mathbf T$ as in (\ref{bbgeovariables}) and $P$ as in (\ref{bbP}), we can use 
 the fact that $D_t\mathbf x= \mathbf u$, to rewrite (\ref{sginc})-(\ref{bbbound}) as the following system of equations for $P(t,\bx)$,  $\mathbf u(t,\bx)$:
\begin{eqnarray}
&&\label{cf2.3i}D_t\mathbf T(t, \mathbf x) = J(\mathbf T(t, \mathbf x) - \mathbf x),\\
&& \label{cf2.3ii}\nabla \cdot \mathbf u = 0,\\
&& \label{cf2.3iii}\mathbf T(t, \mathbf x) = \nabla P(t, \mathbf x),\\
&& \label{cf2.3iv}\mathbf u \cdot \mathbf n = 0 \textrm{ on $[0, \tau ) \times \partial\Omega $},
\end{eqnarray}
with initial condition
\be
 \label{cf2.3v}P(0, \mathbf x) = P_0(\mathbf x):=  \frac{1}{2}(x_1^2 + x_2^2) + p_0(\mathbf x) \textrm{ in $\Omega $},
 \ee
where the symplectic matrix $J$ is defined by 
\begin{equation}
\label{bbJ}
J=\left(\begin{array}{ccc}
0&-1&0
\\
1&0&0
\\0&0&0
\end{array}\right)
\end{equation}
We now write (\ref{cf2.3i})-(\ref{cf2.3v}) in Lagrangian form.  We define the Lagrangian flow map $\mathbf F(t, \mathbf x)$ corresponding to the velocity $\mathbf u$, i.e. 
\[ \frac{\partial }{\partial t}\mathbf F(t, \mathbf x) = \mathbf u(t, \mathbf F(t, \mathbf x)), \qquad \mathbf F(0, \mathbf x) = 0,\]
and can then rewrite (\ref{cf2.3i}), (\ref{cf2.3iii}), as first done in \cite{feldman}, in the form
\begin{equation}\label{bbevolution}
\frac{\partial }{\partial t}\bZ (t, \bx ) = J(\bZ (t, \bx ) - \bF (t, \bx )),\qquad \bZ (t, \bx ) = \nabla P(t, \bF (t, \bx )).
\end{equation}

The incompressibility condition and the  boundary condition can then be reformulated  as
\begin{equation}\label{bblaginc1}
\mathbf F(t, \cdot ) \push \chi _{\Omega} = \chi _{\Omega}\Longleftrightarrow detD\bF (t, \bx ) = 1,
\end{equation}
where $D\mathbf F$ is the Jacobian matrix of $\mathbf F$. Hence $\mathbf F(t, \cdot )$ is a volume preserving mapping of $\Omega $. 
 
 \medskip
Using (\ref{bbevolution}), it is possible to derive an evolution equation for $\nu (t, \mathbf y)$ in dual space.  Namely, for any $\xi\in C_c^1([0,\tau)\times \R^3)$,
\begin{equation}\label{bb17}
\int_{[0, \tau ) \times \mathbb{R}^3} \! \left( \frac{\partial }{\partial t}\xi (t, \mathbf y) + \mathbf w(t, \mathbf y)\cdot \nabla \xi (t, \mathbf y)\right) \nu (t, \mathbf y) \, d\mathbf y dt + \int_{\mathbb{R}^3} \! \xi (0, \mathbf y)\nu (0, \mathbf y) \, d\mathbf y = 0,
\end{equation}
where  the dual velocity $\mathbf w$ is defined (and automatically divergence-free, by its definition)  by
\begin{equation}\label{bbw}
\mathbf w(t, \mathbf y) = J(\mathbf y - \nabla P^*(t, \mathbf y)) \Longrightarrow \nabla\cdot \mathbf w=0.
\end{equation}
with $P^*$ denoting the Legendre transform of $P$:
\begin{equation}P^* = \sup _{\mathbf x \in \Omega } \{ \mathbf x \cdot \mathbf y - P(t, \mathbf x) \}.
\label{legendredef}
\end{equation} 
Equation (\ref{bb17}) is the weak formulation of the transport equation 
\begin{equation}\label{bbdualcont}
\frac{\partial }{\partial t}\nu (t, \mathbf y) + \nabla \cdot (\mathbf w(t, \mathbf y)\nu (t, \mathbf y)) = 0.
\end{equation}

Combining (\ref{bbdualcont}), (\ref{bbw}) and the weak formulation of the Monge-Amp\`{e}re equation (\ref{bblaginc1}) yields the {\em semi-geostrophic equations in dual variables}
\begin{eqnarray}
&&\label{bbdual1} \frac{\partial }{\partial t}\nu (t, \mathbf y) + \nabla \cdot (\mathbf w(t, \mathbf y)\nu (t, \mathbf y)) = 0, \qquad \;\;(t,x)\in [0, \tau ) \times \mathbb{R}^3 ,\\
&&\label{bbdual2} \mathbf w(t, \mathbf y) = J(\mathbf y - \nabla P^*(t, \mathbf y)),\qquad \qquad\qquad (t,x)\in [0, \tau ) \times \mathbb{R}^3 , \\
&&\label{bbdual3} 
\nabla P(t, \cdot ) \push \chi _\Omega = \nu (t, \cdot ), \qquad \textrm{  }\qquad\qquad \qquad \;\;  t \in [0, \tau ),
\end{eqnarray}
where $J$ is defined by (\ref{bbJ}) and  $P^* $ by (\ref{legendredef}); $\nabla P(t, \cdot )$ is the unique optimal transport map of $\chi _\Omega$ to $\nu(t,\cdot)$.


Equation (\ref{bbdual3}) expresses the energy minimisation requirement, hence it is a precise mathematical formulation of Cullen's principle.    
Equations (\ref{bbdual1})-(\ref{bbdual3}) are supplemented with the  initial condition 
\begin{equation}\label{bbinitialnu}
\nu (0, \cdot ) = \nu _0(\cdot ), \qquad \mathbf y \in B(0,r)\subset\mathbb{R}^3.
\end{equation}
Note that we require that  $\nu _0$ is a given measure  with {\em compact support} contained in some ball $B\subset\mathbb{R}^3$.

\subsubsection*{The proof of Benamou and Brenier}
To prove the existence of weak solutions of the  system (\ref{bbdual1})-(\ref{bbinitialnu}), the following strategy was introduced in \cite{benamou}:

\begin{itemize}
\item
Given the  compactly supported, absolutely continuous measure $\nu(t,\by)$ at a given fixed time $t$,  compute the velocity field $\mathbf w$ from (\ref{bbdual3}) and (\ref{bbdual2}). 
\item
 In  order to advect $\nu $ in time using (\ref{bbdual1}), the system is discretised in time.  Then $\mathbf w$ is used to advect $\nu $ to the next time step, using the transport equation (\ref{bbdual1}).  Due to the way in which $\mathbf w$ is constructed, we have that $\mathbf w \in L^\infty _{loc}([0, \tau )\times \mathbb{R}^3)$ and $\mathbf w \in L^\infty ([0, \tau ); \, BV_{loc}(\mathbb{R}^3))$. 
The measure $\nu$ remains compactly supported within a ball whose radius depends on time.
 \item
To solve the transport equation, one must also  use a sequence of regularised problems, with Lipschitz continuous velocity field,   that approximates $\mathbf w$. For the approximating problems, the transport equation is uniquely solvable.   Then, using the stability property of polar factorisation, one can show that these approximate solutions converge to solutions of the system (\ref{bbdual1})-(\ref{bbinitialnu}).  
 \end{itemize}

This strategy gives a proof of the main result \cite[Theorem 5.1]{benamou}; our slightly more general statement  is taken from \cite[Theorem 2.3]{feldman}:
\begin{theorem}\label{bbdualth}
Let $\Omega \subset \mathbb{R}^3$ be an open bounded set such that $\overline{\Omega} \subset B(0, S)$, where $B(0, S)$ is an open ball of radius $S$ centred at the origin.  Let $P_0(\mathbf x)$ be a convex bounded function in $B(0, S)$ satisfying
\begin{equation}\label{cf2.13}
\nu _0 := \nabla P_0 \push \chi _\Omega \in L^q(\mathbb{R}^3)
\end{equation}
for some $q>1$.  Then, for $\tau > 0$, there exist functions $\nu $ on $[0, \tau ) \times \mathbb{R}^3$, $P$ on $[0, \tau ) \times \Omega $ such that $(\nu , P)$ satisfy (\ref{bbdual1})-(\ref{bbdual3})and the initial condition (\ref{bbinitialnu}) in the weak sense. In addition,
\renumerate{ \item $\nu $, $P$ satisfy
\begin{eqnarray*}
&& \nu \in L^\infty ([0, \tau ); L^q(\mathbb{R}^3))\cap C([0, \tau ); L_w^q(\mathbb{R}^3)),\\
&& P \in L^\infty ([0, \tau ); W^{1, \infty }(\Omega )) \cap C([0, \tau ); W^{1, r}(\Omega )),\quad P(t, \cdot )\; is\; convex\; in \;\Omega; 
\end{eqnarray*}
where $r \in [1, \infty )$ and $C([0, \tau ); L_w^q(\mathbb{R}^3))$ is the set of all measurable functions $\mu (t, \mathbf y)$ on $[0, \tau ) \times \mathbb{R}^3$ such that $\mu _{(t)}(\cdot ) = \mu (t, \cdot ) \in L^q(\mathbb{R}^3)$ for any $t \in [0, \tau )$ and, for any $\{ t_k\} _{k = 1}^\infty , \, t_* \in [0, \tau )$ satisfying $\lim _{k \to \infty } t_k = t_*$, we have $\mu _{(t_k)} \rightharpoonup \mu _{(t_*)}$ weakly in $L^q(\mathbb{R}^3)$ (narrowly if $q = \infty $);

\item $\textrm{ for all } t \in [0, \tau ),\;\; supp(\nu (t, \cdot )) \subset B(0, R_0), $
where $R_0 = S(1 + \tau )$;
%

\item $P^* = \sup _{\mathbf x \in \Omega } \{ \mathbf x \cdot \mathbf y - P(t, \mathbf x) \} $ satisfies
\begin{eqnarray*}
&& P^*(t, \cdot ) \textrm{ is convex in $\mathbb{R}^3$ for any $t \in [0, \tau )$},\\
&& P^* \in L^\infty _{loc}([0, \tau ) \times \mathbb{R}^3),\\
&& \nabla P^* \in L^\infty ([0, \tau ) \times \mathbb{R}^3; \mathbb{R}^3) \cap C([0, \tau ); L^r(B(0, R); \mathbb{R}^3)),
\end{eqnarray*}
for any $R>0$ and any $r \in [1, \infty )$.  Moreover,
\[ \| \nabla P^*(t, \cdot )\| _{L^\infty (\mathbb{R}^3)} \leqslant S \qquad \textrm{ for every } t \in [0, \tau ).\]

\item $\mathbf w \in L^\infty _{loc}([0, \tau )\times \mathbb{R}^3)$, $\mathbf w \in L^\infty ([0, \tau ); \, BV_{loc}(\mathbb{R}^3))$.



 }
 
\end{theorem}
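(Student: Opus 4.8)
The plan is to follow the time-discretisation scheme of \cite{benamou} in the refined form of \cite{feldman}: freeze the nonlinearity over short time slabs, resolve it by optimal transport, solve the resulting linear transport equation after mollification, and then pass to the limit first in the mollification parameter and then in the time step. Throughout, the two structural facts doing the work are Brenier's polar factorization theorem \cite{brenier} --- which, for a compactly supported absolutely continuous target $\nu$, produces the unique convex potential $P$ with $\nabla P\push\chi_\Omega=\nu$ together with its conjugate $P^*$, where $\nabla P^*$ maps into $\overline\Omega\subset B(0,S)$ --- and the fact that $\mathbf w(\mathbf y)=J(\mathbf y-\nabla P^*(\mathbf y))$ is divergence-free, so that the associated flow is Lebesgue-measure preserving and the $L^q$ bound propagates without loss.

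First I would fix $h=\tau/N$ and construct an approximate solution $\nu_h$ inductively on the slabs $[nh,(n+1)h]$. Starting from $\nu_h(0,\cdot)=\nu_0$, at step $n$ one applies \cite{brenier} to $\nu_h(nh,\cdot)$ to get a convex $P^n$ on $B(0,S)$ and its conjugate, sets $\mathbf w^n(\mathbf y)=J(\mathbf y-\nabla(P^n)^*(\mathbf y))$, and notes that $\mathbf w^n$ is divergence-free, satisfies $|\mathbf w^n(\mathbf y)|\leq|\mathbf y|+S$, and lies in $BV_{loc}(\R^3)$ since $\nabla(P^n)^*$ is the gradient of a convex function. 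Because $\mathbf w^n$ is only $BV$, I would mollify it at scale $\varepsilon$ to a Lipschitz, still divergence-free field $\mathbf w^n_\varepsilon$, solve $\partial_t\mu+\nabla\cdot(\mathbf w^n_\varepsilon\mu)=0$ on the slab with data $\nu_h(nh,\cdot)$ via its classical (measure-preserving) flow, and let $\varepsilon\to0$; by the stability theory for continuity equations driven by bounded divergence-free $BV$ fields \cite{ambbook} these converge to a solution with velocity $\mathbf w^n$, which defines $\nu_h$ on $[nh,(n+1)h]$ with $\|\nu_h(t,\cdot)\|_{L^q}$ unchanged. Concatenating over $n$ produces $\nu_h$ on $[0,\tau)$, together with the uniform-in-$h$ estimates: $\|\nu_h(t,\cdot)\|_{L^q}=\|\nu_0\|_{L^q}$ for all $t$ (this is where incompressibility, $\nabla\cdot\mathbf w=0$, is essential --- the flow merely rearranges mass); $\operatorname{supp}\nu_h(t,\cdot)\subset B(0,R(t))$ with $R(t)\leq S(1+t)$ by a Gr\"onwall estimate along characteristics using $|\nabla(P^n)^*|\leq S$, which is exactly (ii); $\|\nabla P_h^*\|_{L^\infty}\leq S$; and equicontinuity of $t\mapsto\nu_h(t,\cdot)$ when tested against $C^1_c$ functions, coming from the uniform bound on $\mathbf w^n$ on compact sets.

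A compactness argument of Arzel\`a--Ascoli / Aubin--Lions type then yields a subsequence with $\nu_h\rightharpoonup\nu$ in $L^\infty([0,\tau);L^q(\R^3))$ weak-$*$ and in $C([0,\tau);L^q_w(\R^3))$, while the convex functions $P_h,P_h^*$ converge locally uniformly to limits $P,P^*$. The crucial step --- and the place I expect the main obstacle --- is passing to the limit in the nonlinear advection term $\nabla\cdot(\mathbf w_h\nu_h)$ in the weak formulation (\ref{bb17}). Two coupled difficulties appear: (a) $\mathbf w_h(\mathbf y)=J(\mathbf y-\nabla P_h^*(\mathbf y))$ depends on $\nu_h$ through the optimal transport problem, so one needs the \emph{stability of polar factorization} --- that $\nu\mapsto\nabla P^*[\nu]$ is continuous when $\nu$ varies with narrow (equivalently here, weak-$L^q$) convergence and supports in a fixed ball --- to upgrade the local uniform convergence of $P_h^*$ to strong convergence $\mathbf w_h\to\mathbf w$ in $L^r_{loc}$ for every $r<\infty$; (b) one must show that the time-discretisation error from freezing $\mathbf w$ over each slab of length $h$ vanishes as $h\to0$, using the uniform modulus of continuity of $t\mapsto\nu_h(t,\cdot)$ against smooth test functions. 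Combining strong convergence of $\mathbf w_h$ with weak convergence of $\nu_h$ gives $\mathbf w_h\nu_h\rightharpoonup\mathbf w\nu$, so $(\nu,P)$ solves (\ref{bbdual1})--(\ref{bbdual2}) weakly, and the constraint (\ref{bbdual3}) passes to the limit for every $t$ from the same stability together with the $C([0,\tau);L^q_w)$ convergence. Finally the regularity claims (i)--(iv) are read off from the construction: the $L^q$ bound and weak continuity from the uniform estimates; $P\in L^\infty([0,\tau);W^{1,\infty}(\Omega))$, convexity of $P$ and $P^*$ and $\|\nabla P^*\|_\infty\leq S$ from \cite{brenier}; the support bound (ii) from the Gr\"onwall estimate; and $\mathbf w\in L^\infty_{loc}\cap L^\infty([0,\tau);BV_{loc}(\R^3))$ since $\nabla P^*$ is a locally bounded map with values in $B(0,S)$ of $BV_{loc}$ regularity, being the gradient of a convex function.
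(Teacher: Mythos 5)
Your proposal faithfully reproduces the time-discretisation plus polar-factorisation-stability strategy that the paper attributes to \cite{benamou} and, in its refined form, to \cite{feldman} for this theorem; the paper itself only sketches that strategy in the bullet points preceding the statement and defers the details to those references, and your reconstruction supplies the right steps in the right order. (It is worth noting for context that the paper deliberately departs from this route when proving its own main result, Theorem \ref{new5.5}, replacing the time-discretisation by the Ambrosio--Gangbo Hamiltonian-flow machinery; but for Theorem \ref{bbdualth} your approach is the one intended.)

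One step needs more care, and as written it would give the wrong conclusion. You record the bound $|\mathbf w^n(\mathbf y)|\leq|\mathbf y|+S$ and then assert that the support bound $R(t)\leq S(1+t)$ of part (\emph{ii}) follows ``by a Gr\"onwall estimate along characteristics.'' Applying Gr\"onwall to $\frac{d}{dt}|\mathbf y|\leq|\mathbf y|+S$ gives the exponential bound $|\mathbf y(t)|\leq(|\mathbf y(0)|+S)e^{t}-S$, not the linear bound stated. The linear growth comes from the antisymmetry of $J$ and the fact that the third row of $J$ vanishes: along a characteristic,
\[
\frac{d}{dt}|\mathbf y|^{2}
=2\,\mathbf y\cdot J\bigl(\mathbf y-\nabla P^{*}(\mathbf y)\bigr)
=-2\,\mathbf y\cdot J\nabla P^{*}(\mathbf y)
\leqslant 2\,|\mathbf y|\,|\nabla P^{*}(\mathbf y)|
\leqslant 2S\,|\mathbf y|,
\]
since $\mathbf y\cdot J\mathbf y=0$. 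Hence $\frac{d}{dt}|\mathbf y|\leqslant S$, giving $|\mathbf y(t)|\leqslant|\mathbf y(0)|+St$ and thus $R_0=S(1+\tau)$ once the initial support lies in $B(0,S)$. This cancellation is not cosmetic: it is what keeps the support from growing exponentially on $[0,\tau)$, and hence what makes the uniform-in-$h$ estimates (hence the whole compactness argument) go through on an arbitrary fixed time horizon. Make it explicit rather than appealing to the crude velocity bound.
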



 \begin{remark}
 The original result of \cite{benamou} makes the assumption $q>3$ in Theorem \ref{bbdualth}.  Lopes Filho and Nussenzveig Lopes \cite{lopes} extended this result to $q>1$.  Loeper \cite{loeper} extended this result further, proving existence and stability of measure valued solutions. In \cite{faria}, Faria \emph{et al.} have extended the results of \cite{feldman} for the incompressible equations to the case of an  initial potential density $\nu _0$ in $L^1$. Faria has recently done the same for the compressible system as well, \cite{faria2} .
 
 In view of these results, we will include the case $q=1$ in our main statements below.
 \end{remark}

The strategy employed to prove Theorem \ref{bbdualth} can be adapted to prove existence of weak solutions in dual space for the compressible equations \cite{CGP1,dkgthesis}. In this paper, we will prove an analogous result for the case of  a free boundary condition, using a modification of the original strategy that does not explicitly require the time discretization argument of \cite{benamou}, but relies instead on the theory of Hamiltonian ODEs of \cite{ambgangbo}, summarised in the Appendix.  This basic structure of proof was already used in  \cite{CGP1}.

\section{The incompressible free boundary problem}
\setcounter{equation}{0}

In this section, we study the problem obtained when  the rigid boundary condition  (\ref{bbbound}) considered in \cite{benamou} is replaced by a more physically relevant free boundary condition.   To model this situation, the equations (\ref{sginc}) are to be solved in $[0, \tau ) \times \Omega_h (t)$, where the domain $\Omega_h (t) \subset \mathbb{R}^3$ is time-dependent and represents the region occupied by the fluid at time $t$:  
\begin{equation}\label{cgfreeOmega}
\Omega_h (t) = \{ (x_1, x_2, x_3) \in \mathbb{R}^3 : (x_1, x_2) \in \Omega _2, 0 \leqslant x_3 \leqslant h(t, x_1, x_2) \} .
\end{equation}
Here $\Omega _2 \subset \mathbb{R}^2$ is a fixed bounded domain with rigid wall boundary conditions, while  $h(t,x_1,x_2)$ is unknown and represents the free boundary.  

The incompressibility of the flow can be formulated as the requirement that $|\Omega_h (t)|$ remains constant for all $t \in [0, \tau )$, where $|\cdot |$ denotes the three-dimensional Lebesgue measure.  In what follows, we normalise the measure so that 
$$|\Omega_h(t)| = 1\quad  for \; all \;\;t<\tau.$$

We denote by $\sigma_{h}(t,\bx)\in\mathcal{P}_{ac}(\mathbb{R}^3)$ the probability measure defined on $\R^3$  by 
\begin{equation}\label{sigmah}
 \sigma_{h}(t,\bx)= \chi _{\Omega _{h(t) }}(\bx), \qquad \int_{\R^3}\sigma_h(t,\bx)d\bx=1\quad \forall t<\tau.
\end{equation}

We make no a-priori assumption that $h(t,x_1,x_2)$ is a well defined, single valued function, since in principle the free boundary could develop an overhanging profile. Hence our notation in (\ref{cgfreeOmega}) is not   well defined. However,  we will show that the solution indeed corresponds  to a well-defined function, so the  abuse of notation in our definition of the domain is ultimately justified.
  
The flat rigid bottom of the domain is defined by $x_3 = 0$.



The  boundary conditions we consider are 
\begin{eqnarray}\label{cgspeedboundary}
&&\mathbf u \cdot \mathbf n = 0 \qquad \bx\in \partial \Omega_h (t)\setminus \{x_3=h\},
\\
\label{cgboundcon}
&&\left\{
\begin{array}{l}\partial_th+u_1\frac{\partial h}{\partial x_1}+ u_2\frac{\partial h}{\partial x_2}=u_3, \\ p(t, x_1, x_2, h(x_1, x_2)) = p_h,\end{array}
\right.\quad \bx\in \partial \Omega_h (t):\; x_3=h(t,x_1,x_2),
\end{eqnarray}
where $p_h$ is  a prescribed constant; for convenience henceforth we take $p_h=0$.

\medskip
In what follows, we first state the results of \cite{gangbo}, obtained by taking advantage of  the additional assumption of constant density. This assumption reduces the dimensionality of the problem, so that the governing equations are transformed to the shallow water  system.

We then consider variable density and the incompressible three-dimensional problem, and prove our main result.

\subsection{Constant density - the 2-D shallow water equations}\label{cg}
When the density is assumed constant, the system (\ref{sginc}) describing the flow of an incompressible fluid reduces to the \emph{two-dimensional semi-geostrophic shallow water equations}:
\begin{eqnarray}
&&\label{shallmom1} D^{(2)}_t u_1^g -  u_2 + \frac{\partial h}{\partial x_1} = 0,\\ 
&&\label{shallmom2} D^{(2)}_tu_2^g +  u_1 + \frac{\partial h}{\partial x_2} = 0,\\ 
&&\label{shallcont} \frac{\partial h}{\partial t} + \nabla _2 \cdot (h\mathbf u_2) = 0,\\ 
&&\label{shallgeo} u_1^g = -\frac{\partial h}{\partial x_2}, \qquad u_2^g = \frac{\partial h}{\partial x_1}, 
\end{eqnarray}
where $\mathbf u_2 = (u_1, u_2)$, $D_t^{(2)}=\partial_t+\mathbf u_2\cdot\nabla$,  and all equations are to be solved for $(t, \mathbf x) \in [0, \tau ) \times \Omega _2$.
  The system (\ref{shallmom1})-(\ref{shallgeo}) is to be considered with the prescribed initial and boundary conditions 
\begin{equation}\label{shallcon} 
\mathbf u_2 \cdot \mathbf n = 0 \quad \textrm{ on } [0, \tau ) \times \partial \Omega _2, \qquad h(0, \cdot ) = h_0(\cdot ) \quad \textrm{ in } \Omega _2.
\end{equation}
Note that the evolution of the free boundary $h(t,\bx)$ is now explicitly part of the system of governing equations, which are posed in the {\em fixed} domain $\Omega_2$.

The 2-D geostrophic energy associated with the flow is defined by
\begin{equation}\label{shallenergy}
E_2 = \int_{\Omega _2 }\! \bigg( \frac{1}{2}( (u_1^g)^2 + (u_2^g)^2)h + \frac{1}{2}h^2 \bigg)  \, dx_1dx_2.
\end{equation}

The dual system in Lagrangian  coordinates, obtained after passing to the dual coordinates $y_1=x_1+u_g^2$, $y_2=x_2-u_g^1$, is given by
  \begin{eqnarray}
&&\label{shalldual1} \frac{\partial }{\partial t}\nu (t, \mathbf y) + \nabla _2 \cdot (\mathbf w(t, \mathbf y) \nu (t, \mathbf y)) = 0, \qquad J_2=\left(\begin{array}{cc}
0&-1\\1&0\end{array}\right),\\
&&\label{shalldual2} \mathbf w (t, \mathbf y) =  J_2 (\mathbf y - \nabla _2 P^*(t, \mathbf y)), \qquad \textrm{ in } [0, \tau ) \times \mathbb{R}^2, \\
&&\label{shalldual3} \nabla _2 P(t, \cdot ) \textrm{\#}h(t, \cdot ) = \nu (t, \cdot ) \qquad \textrm{ for any } t \in [0, \tau ),\\
&&\label{shalldual4} P(t, \mathbf x) =   h(t, \mathbf x) + \frac{1}{2}(x_1^2 + x_2^2) , \qquad \textrm{ in } [0, \tau ) \times \Omega _2 ,\\
&&\label{shalldual5} \nu (0, \mathbf y) = \nu _0(\mathbf y) \qquad \textrm{ given, compactly\;supported}.
\end{eqnarray}

The main theorem of \cite{gangbo} is summarised below.
\begin{theorem}\label{cgdualth}
Let $\Omega _2 \subset \mathbb{R}^2$ be an open connected set. Let $r$ be given, $1\leq r<\infty$. Assume that $\nu_0\in L^r(\R^2)$, $h_0\in L^1(\R^2)$ are two probability density functions,    such that  {\em support}$(\nu_0) \subset B(0, S)$, where $B(0, S)$ is an open ball of radius $S$ centered at the origin.  Assume also that the function $P_0(\bx)= |x|^2/2+h_0(\bx)$ can be extended to  a convex bounded function in $\R^2$ and that $\nu_0$, $h_0$ satisfy
\begin{equation}\label{cg2.13}
\nu _0 = \nabla  P_0 \push h _0.
\end{equation}
Then, for $\tau > 0$, there exist functions $\nu $ on $[0, \tau ) \times \mathbb{R}^2$, $P$ on $[0, \tau ) \times \Omega _2 $ such that
$(\nu,P)$ satisfy (\ref{shalldual1})-(\ref{shalldual5}) and the initial condition (\ref{shalldual5}) in the weak sense.
In addition $\nu $, $P$ satisfy the regularity stated in  (i)-(iv)  of Theorem \ref{bbdualth}.

\end{theorem}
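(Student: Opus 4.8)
The plan is to follow the Benamou--Brenier strategy as adapted to the free-boundary setting, treating the problem as a Hamiltonian flow in the space of probability measures. The key object is the \emph{dual energy} functional: rewriting the $2$-D geostrophic energy $E_2$ from (\ref{shallenergy}) in dual coordinates and using the constraint (\ref{cg2.13}), one obtains a functional $\mathcal{E}[\nu]$ defined on compactly supported measures $\nu\in\mathcal{P}_{ac}(\mathbb{R}^2)$, whose value is determined by the optimal transport between $\nu$ and the reference measure $h$ for the cost $c(\mathbf x,\mathbf y)=\tfrac12|\mathbf x-\mathbf y|^2$ (up to the affine terms coming from $P=|x|^2/2+h$). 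First I would make precise the relation between minimising $E_2$ over admissible rearrangements and the statement that $\nabla_2 P(t,\cdot)\#h(t,\cdot)=\nu(t,\cdot)$ is the optimal map; here Brenier's polar factorisation theorem (invoked earlier) gives existence, uniqueness, and the convexity of $P$, and one must check that the hypothesis that $P_0$ extends to a convex bounded function on $\mathbb{R}^2$ is propagated in time.

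The second step is to compute the superdifferential of the Hamiltonian $\mathcal{E}$ at a point $\nu$ and to verify it coincides with the dual velocity $\mathbf w=J_2(\mathbf y-\nabla_2 P^*)$ given by (\ref{shalldual2}), where $P^*$ is the Legendre transform. Concretely, one perturbs $\nu$ along a transport velocity field and differentiates the optimal-transport cost; the envelope theorem (differentiating the sup defining the optimal cost, with the optimiser fixed to leading order) yields that the relevant gradient is $\nabla_2 P^*$, and composition with the symplectic rotation $J_2$ produces a divergence-free vector field, automatically so by the structure in (\ref{bbw})/(\ref{shalldual2}). One then checks the regularity and local-boundedness/$BV_{\mathrm{loc}}$ bounds on $\mathbf w$: since $\mathrm{supp}(\nu_0)\subset B(0,S)$ and $\|\nabla_2 P^*\|_{L^\infty}\le S$ (because $P^*$ is a sup over the bounded set $\Omega_2$), the support of $\nu(t,\cdot)$ grows at most linearly, so on each time slice $\mathbf w$ is bounded and, being $J_2$ applied to the gradient of a convex function, is $BV_{\mathrm{loc}}$.

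The third step is to invoke the abstract existence theory for Hamiltonian ODEs in $(\mathcal{P}_2,W_2)$ of Ambrosio--Gangbo (summarised in the Appendix): once $\mathcal{E}$ is shown to be (locally) bounded, continuous along narrowly convergent sequences with uniformly bounded supports and $L^r$ norms, and its superdifferential non-empty with the explicit form above, the abstract theorem produces a curve $t\mapsto\nu(t,\cdot)$ solving the continuity equation (\ref{shalldual1}) weakly with velocity $\mathbf w$ from (\ref{shalldual2}), for all $\tau>0$. The maps $P(t,\cdot)$ and $P^*(t,\cdot)$ are then recovered by solving the Monge--Amp\`ere/polar-factorisation problem (\ref{shalldual3}) at each time, and (\ref{shalldual4}) defines $h$; a stability argument for optimal maps under narrow convergence of $\nu$ (Loeper-type estimates, or the $L^r$-stability used by Cullen--Feldman) gives the claimed continuity in time of $P$, $\nabla_2 P^*$, and weak-$L^r$ continuity of $\nu$, which are exactly properties (i)--(iv) of Theorem \ref{bbdualth}.

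The main obstacle is the verification that the superdifferential of the dual energy is non-empty and equals precisely $\mathbf w$: this requires a careful first-variation computation for the transport cost with a \emph{variable} reference measure $h$ that is itself the push-forward relating to $\nu$ through the constraint, so the perturbation of $\nu$ is coupled to a perturbation of the domain $\Omega_h$, and one must ensure the affine corrections from $P=|x|^2/2+h$ and the convexity constraint on $P$ do not spoil the identification. A secondary difficulty is checking the hypotheses of the Ambrosio--Gangbo framework uniformly along the flow --- in particular the $\lambda$-convexity/semi-concavity-type regularity of $\mathcal{E}$ needed for well-posedness --- and ensuring the solution constructed in dual space indeed corresponds to a genuine (single-valued, non-overhanging) free boundary $h$, which follows a posteriori from the convexity of $P$ but needs to be argued explicitly.
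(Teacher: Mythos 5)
The paper does not actually prove Theorem~\ref{cgdualth}: it states it as a summary of the result of Cullen and Gangbo \cite{gangbo}, with no proof supplied (the next proof given in the text is for the 3-D free-boundary problem, Theorem~\ref{new5.5}). So there is no ``paper's own proof'' to compare against directly. What is worth flagging is that your route --- casting the dual energy as a Hamiltonian on $(\mathcal{P}_2,W_2)$ and invoking the Ambrosio--Gangbo theory of Hamiltonian ODEs --- is not the route taken in \cite{gangbo}. The original 2-D proof follows the Benamou--Brenier strategy: uniqueness of the minimiser of $E_\nu$ over $h$ is obtained by a strict-convexity argument, approximate solutions are built by time-stepping with the resulting dual velocity, and one passes to the limit by stability of optimal maps. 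The present paper says so explicitly in the introduction (``The proof differs from the one given in \cite{gangbo} also in its use of the approach introduced in \cite{CGP1}, namely it exploits the general theory of Hamiltonian ODEs'') and again in Section~\ref{Hsect} (``We do not follow the strategy employed for the proof of the analogous result for the 2-dimensional problem. Indeed, in our case it does not seem straightforward to prove that the energy functional is strictly convex with respect to $h$''). Your proposal is therefore best read as a 2-D specialisation of the paper's \emph{own} proof of Theorem~\ref{new5.5}, not as a reconstruction of the argument in \cite{gangbo}.

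Both routes plausibly work, but they trade off differently. The Ambrosio--Gangbo framework avoids explicit time-discretisation and yields energy conservation essentially for free via (H3) and $\lambda$-concavity; in exchange you must carry out the superdifferential identification you correctly flag as the hard step. Note two 2-D-specific points that your sketch glosses over. First, the 2-D reference measure in (\ref{shalldual3}) is the density $h$ itself, not a characteristic function $\chi_{\Omega_h}$ as in 3-D, and $h$ is slaved to $P$ through (\ref{shalldual4}); so perturbing $\nu$ induces a perturbation of the whole absolutely continuous reference measure, and the first-variation computation of the optimal cost is entangled with that coupling in a way that does not arise in the 3-D argument around Proposition~\ref{newmainalt}. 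Second, the symplectic structure is the full-rank $J_2$ of (\ref{shalldual1}), not the degenerate $\tilde J$ of (\ref{tildeJ}) carrying the $y_3$ factor, so the tangent-space projection and the argument that you ``need only consider variations in the $(y_1,y_2)$ directions'' used in the 3-D proof of Proposition~\ref{newmainalt} have to be reworked for the non-degenerate 2-D case. Neither is a fatal obstacle, but both are genuine gaps between the sketch and a proof, and the Cullen--Gangbo strict-convexity route sidesteps the first of them entirely.
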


\subsection{Variable density - the incompressible free boundary problem in 3-D }

We now consider the incompressible semi-geostrophic system (\ref{sginc})  in the region $\Omega_h (t) $ given by (\ref{cgfreeOmega}), with  boundary conditions  (\ref{cgspeedboundary})-(\ref{cgboundcon}). 

%

\medskip
The energy associated with the flow is the geostrophic energy  defined by
\begin{equation}\label{myincenergy}
E = \int_{\Omega_h}\! \bigg( \frac{1}{2}((u_1^g)^2 + (u_2^g)^2) + \rho x_3 \bigg) \, d\mathbf x.
\end{equation}
By a formal but straightforward calculation, it can be shown that, as expected, this   energy integral is conserved in time.
\begin{proposition}\label{energyconserve}
The system (\ref{sginc})-(\ref{hinitial}) conserves the energy integral in (\ref{myincenergy}).
\end{proposition}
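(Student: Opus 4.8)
The plan is to verify the claimed conservation law by differentiating $E$ in time and using the evolution equations together with the free boundary conditions to show that all boundary terms cancel. I would work at the same formal level as the statement (which says "formal but straightforward"), so no function-space technicalities; everything is assumed smooth and the manipulations are classical.

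First I would write $\frac{d}{dt}E = \frac{d}{dt}\int_{\Omega_h(t)} \big(\tfrac12((u_1^g)^2+(u_2^g)^2)+\rho x_3\big)\,d\bx$ and apply the Reynolds transport theorem for a domain whose boundary moves with the fluid velocity $\mathbf u$ — this is exactly what the kinematic free boundary condition $\partial_t h + u_1 \partial_{x_1}h + u_2\partial_{x_2} h = u_3$ together with $\mathbf u\cdot\mathbf n = 0$ on the lateral and bottom boundary guarantee. Reynolds' theorem then gives $\frac{d}{dt}E = \int_{\Omega_h}\big(D_t(\tfrac12|\mathbf u^g|^2) + D_t(\rho x_3)\big)\,d\bx + \int_{\Omega_h}\big(\tfrac12|\mathbf u^g|^2+\rho x_3\big)\nabla\cdot\mathbf u\,d\bx$; the last term vanishes by incompressibility $\nabla\cdot\mathbf u = 0$. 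Next I compute the two Lagrangian derivatives: $D_t(\rho x_3) = x_3 D_t\rho + \rho D_t x_3 = 0 + \rho u_3$ using $D_t\rho = 0$ and $D_t x_3 = u_3$; and $D_t(\tfrac12|\mathbf u^g|^2) = u_1^g D_t u_1^g + u_2^g D_t u_2^g$, which from the momentum equations in (\ref{sginc}) equals $u_1^g(u_2 - \partial_{x_1}p) + u_2^g(-u_1 - \partial_{x_2}p)$.

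Now I substitute the geostrophic/hydrostatic relations $\partial_{x_1}p = u_2^g$, $\partial_{x_2}p = -u_1^g$, $\partial_{x_3}p = -\rho$. The term $u_1^g(u_2-\partial_{x_1}p) + u_2^g(-u_1-\partial_{x_2}p)$ becomes $u_1^g u_2 - u_1^g u_2^g - u_2^g u_1 + u_2^g u_1^g = u_1^g u_2 - u_2^g u_1$. So the integrand reduces to $u_1^g u_2 - u_2^g u_1 + \rho u_3$. Using the geostrophic relations again to express $u_1^g, u_2^g, \rho$ in terms of $\nabla p$, namely $u_2^g = \partial_{x_1}p$, $u_1^g = -\partial_{x_2}p$, $\rho = -\partial_{x_3}p$, the integrand becomes $-u_2\partial_{x_2}p - u_1\partial_{x_1}p - u_3\partial_{x_3}p = -\mathbf u\cdot\nabla p = -\nabla\cdot(p\,\mathbf u)$ (again using $\nabla\cdot\mathbf u = 0$). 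Hence $\frac{d}{dt}E = -\int_{\Omega_h}\nabla\cdot(p\,\mathbf u)\,d\bx = -\int_{\partial\Omega_h} p\,(\mathbf u\cdot\mathbf n)\,dS$ by the divergence theorem.

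Finally I dispose of the boundary integral. On the lateral walls and the flat bottom $\mathbf u\cdot\mathbf n = 0$ by (\ref{cgspeedboundary}), so those pieces vanish. On the top free surface $x_3 = h$, the dynamic boundary condition $p = p_h = 0$ (the normalization $p_h = 0$ fixed in the text) kills the integrand there. Therefore $\frac{d}{dt}E = 0$, which is the claim. The only place needing a little care — and the "main obstacle" in an otherwise mechanical computation — is making sure the moving-domain transport theorem is applied with the correct boundary velocity: one must check that the boundary of $\Omega_h(t)$ really does move with normal speed $\mathbf u\cdot\mathbf n$ everywhere, which on the top is precisely the content of the kinematic condition $\partial_t h + u_1\partial_{x_1}h + u_2\partial_{x_2}h = u_3$ and on the rest follows from $\mathbf u\cdot\mathbf n = 0$ (a non-moving boundary being the degenerate case of moving with zero normal speed). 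I would remark that this is a formal calculation in the sense that it presupposes enough regularity of $h$, $p$, $\mathbf u$ and of $\partial\Omega_h(t)$ for the transport and divergence theorems to apply, consistent with the "formal" qualifier in the statement.
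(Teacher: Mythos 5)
Your proof is correct and is exactly the formal calculation the paper alludes to when it writes ``By a formal but straightforward calculation, it can be shown\ldots'' (the paper does not spell out the details, deferring to \cite{dkgthesis}). The Reynolds transport step, the reduction of the integrand to $-\mathbf u\cdot\nabla p = -\nabla\cdot(p\,\mathbf u)$ via the geostrophic/hydrostatic relations, and the cancellation of the surface integral using $\mathbf u\cdot\mathbf n=0$ on the rigid parts together with the dynamic condition $p=p_h=0$ on the free surface all check out.
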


Similarly,  a  formal argument shows that geostrophic and hydrostatic balance can be characterised as a stationary point of the energy in (\ref{myincenergy}) with respect to a particular class of variations, supporting the validity of Cullen's stability principle also in this case. 

\begin{remark}[Support of the density $\rho(t,\bx)$]
We can assume that there exists $\delta>0$ such that the density $\rho(t,\bx)$ satisfies
\be
\delta<\rho(t,\bx)<\frac 1 \delta,\qquad \bx\in\Omega_h,\; \mbox{uniformly for }\; t<\tau.
\label{densityest}\ee
This follows from assuming the bound at time $t=0$ and employing the third of equations (\ref{sginc}).
 The full arguments are presented  in \cite{dkgthesis}.
\end{remark}

Note that the incompressibility condition as expressed by (\ref{sigmah}) and the conservation of energy  (\ref{myincenergy}) imply that any sufficiently regular $h(t,\cdot)$ which is a solution of the system  has to satisfy
\be
 h(t,\cdot)\in L^1\cap L^2(\Omega_2),
 \label{hass}
 \ee
 at least if it is assumed that $\rho(t=0)$ satisfies the bound (\ref{densityest}), and that the energy $E$ is initially bounded. 
  
 Indeed, 
\be
\|h\|_1=\int_{\Omega_2}h(x_1,x_2)dx_1dx_2=\int_{\Omega_2}\int_0^h d\bx=\int_{\Omega}d\sigma_h=1,
\label{l1}\ee
and
$$
\|h\|_2^2=\int_{\Omega_2}h^2(x_1,x_2)dx_1dx_2=\int_{\Omega_2}\left[\int_0^h2x_3dx_3 \right]dx_1dx_2\leq \frac 2 {\delta_\rho}\int_{\Omega_2}\left[\int_0^h
\rho x_3dx_3 \right]dx_1dx_2
$$
\be
\leq\frac 2 {\delta_\rho} \int_{\Omega_h}\! \bigg( \frac{1}{2}((u_1^g)^2 + (u_2^g)^2) + \rho x_3 \bigg) \, d\mathbf x=\frac 2 {\delta}E:=C_0.
\label{l2}\ee

We also assume that there exists a constant  $H>0$ such  that for every admissible $h(t,\cdot)$,
 \be
 \Omega_h\subset  \Omega_2\times [0,H):=\Omega_H.
 \label{omegaH}\ee 
  This assumption will be justified by our solution procedure.   


\subsubsection{Dual formulation}\label{secnewgeo}
In what follows, we assume that $\Lambda \subset \mathbb{R}^3$ is an open bounded set.  Indeed, we assume there exists $R_0>0$ such that 
\be
\Lambda\subset  \Lambda_2\times [-R_0,0), \quad R_0>0, \;\Lambda_2\subset \R^2\;bounded.
\label{Lambda0}\ee
This bound follows from the bound (\ref{densityest}) on $\rho(\bx,t)$, and from the fact that $\Lambda_2$ can be assumed to remain bounded. The latter is guaranteed by condition (H1), see section \ref{newmain}.

\medskip
The change of variables to the geostrophic coordinates, for each fixed $h(x_1,x_2)$ describing the domain,  is defined in this case by
\[ \mathbf T:\Omega_h(t)\to\Lambda,\quad \mathbf T(t, \mathbf x) = (T_1(t, \mathbf x), T_2(t, \mathbf x), T_3(t, \mathbf x)) = (y_1, y_2, y_3), \]
where
\begin{equation}\label{newchange}
T_1(\bx) = x_1 + u_2^g, \qquad T_2(\bx) = x_2 - u_1^g, \qquad T_3(\bx) = -\rho.
\end{equation}
This definition of the mapping $\mathbf T$, and the bound (\ref{Lambda0}), imply that, for all $t<\tau$, the geostrophic velocity $(u_1^g, u_2^g)$ remains bounded. 

We will denote the inverse of $\mathbf T$ by $\mathbf S$ (see Theorem \ref{new3.3} below);
\[ \mathbf S(t, \mathbf y ) = (S_1(t, \mathbf y ), S_2(t, \mathbf y ), S_3(t, \mathbf y )) = \mathbf T^{-1}(t, \mathbf y),\quad \by\in\Lambda.\]

\smallskip
We show next that, as in the rigid boundary case, the problem can be formulated as an optimal transport problem, whose solution is given by the gradient of a convex function.

We use (\ref{newchange}) to rewrite the energy in (\ref{myincenergy}), at fixed time $t$, as the following functional in dual space:
\begin{eqnarray}
 E[\mathbf T, h] &&= \int_{\Omega _h}\! \left[ \frac{1}{2}\{|x_1 - T_1(\bx)|^2 + |x_2 - T_2(\bx)|^2 \} - x_3T_3(\bx) \right] \, d\mathbf x \label{en1}
\end{eqnarray}
The following definition is the analogue of  Definition \ref{potden}. 

\begin{definition}
Given $\sigma_h$ as in (\ref{sigmah}), define the potential density $\nu := \mathbf T \textrm{\#}\sigma_h \in \mathcal{P}_{ac}(\Lambda)$ associated with the flow described by  
 (\ref{sginc})-(\ref{cgboundcon}) as the push forward of the measure $\sigma_h\in  \mathcal{P}_{ac}(\mathbb{R}^3)$ under the map $\mathbf T$ given by (\ref{newchange}). \end{definition}
 \begin{remark}[Support of the potential density $\nu(\bx,t)$]
We show below that the potential density  $\nu(t,\mathbf y)$ must satisfy the evolution (\ref{newdualinc1}), Assuming that  at  time $t=0$ the initial potential density  $\nu_0$ has compact support in $\mathbb R^3$, we can deduce that   $supp(\nu)$ is contained in a bounded open set $\Lambda $, depending on the time interval length $\tau $, such that $\overline{\Lambda} \subset \mathbb{R}^2 \times [-\frac{1}{\delta},-\delta]$, for some $\delta$ with $0<\delta<1$. This follows from a standard fixed-point argument; see, for example, \cite{cullen,loeper}.
\end{remark}
 
Define the functional
\begin{equation}\label{newenergyminfirst}
E_\nu (\sigma_h) = \inf _{{\mathbf T}:\,{\mathbf T} \textrm{\#} \optfreem_h = \nu }\int_{\mathbb{R}^3} \! c(\mathbf x, {\mathbf T}(\mathbf x))\sigma_h(\mathbf x)\, d\mathbf x,
\end{equation}
where $\optfreem_h $ is defined in (\ref{sigmah}) and the cost function  $c$ is given by
\begin{equation}\label{newcost}
c(\mathbf x, \mathbf y) = \left [\frac{1}{2}\{|x_1 - y_1|^2 + |x_2 - y_2|^2 \} - x_3y_3 \right ] .
\end{equation}

\begin{principle}[Cullen's stability principle]
At each fixed time $t$, the pair $(\sigma_{\bar h} , \overline{\mathbf T})$ corresponding to a solution of   (\ref{sginc}) with boundary conditions (\ref{cgboundcon}) minimises the energy (\ref{en1}) amongst all pairs $(\sigma_h , {\mathbf T})$ where  $\sigma_h$ is given by (\ref{sigmah})  and ${\mathbf T} \textrm{\#} \sigma_h = \nu $.  

Namely,  given $\nu \in \mathcal{P}^2_{ac}(\Lambda )$,  a stable solution corresponds to the following minimal value for the  energy:
\begin{equation}\label{newstablefirst} {\cal E}(t,\nu) =  \inf _{\sigma_h\in{\mathcal{H}} }E_\nu (\sigma_h ) = \inf _{\sigma_h\in{\mathcal{H}} } \left\{ \inf _{{\mathbf T}:\,{\mathbf T} \textrm{\#} \optfreem_h = \nu }\int_{\mathbb{R}^3} \! c(\mathbf x, {\mathbf T}(\mathbf x)) \optfreem_h(\mathbf x) \, d\mathbf x \right\} ,
\end{equation}
where ${\cal H}\subset \mathcal{P}_{ac}(\R^3)$ is an appropriate subset of $\mathcal{P}_{ac}(\R^3)$.
\end{principle}

%

%
\subsubsection{Lagrangian formulation and statement of the main theorem}

We  formulate the semi-geostrophic system in dual variables in Lagrangian form, in a way entirely analogous to the rigid boundary case. This yields
\begin{eqnarray}
&&\label{newdualinc1} \frac{\partial \nu }{\partial t} + \nabla \cdot (\nu \mathbf w) = 0, \quad \textrm{ in } [0, \tau ) \times \Lambda , \\
&&\label{newdualinc2} \mathbf w(t, \mathbf y) = J(\mathbf y - \nabla P^*(t, \mathbf y)),  \quad \textrm{ in } [0, \tau ) \times \Lambda , \\
\label{newdual trans}&&\nabla P\push\sigma_{ h}=\nu,\quad\nabla P(t,\cdot)\,\textrm{ is the unique optimal transport map, and}
\\
\label{newdualinc5}&& \sigma_{h}  \textrm{ minimises } E_{\nu(t, \cdot )} (\cdot ) \textrm{ over } \Freem ,  \qquad t \in [0, \tau ).
\end{eqnarray}
Here, $P^*$ denotes the Legendre transform of the (convex) function $P$ and ${\cal H}$ denotes an appropriate minimisation space, which we define in the next section, see (\ref{calH}).
   
At each fixed time $t<\tau$, the unknowns in this system are the fluid profile $h(t,x_1,x_2)$  and the geopotential  $P(t,\bx)$. We can assume that $h(t,x_1,x_2)$ is a well defined function of $(x_1,x_2)\in\Omega_2$, an assumption justified by the result of Lemma \ref{lemmasingleval} below.

Given $P(t,\bx)$ and $h(x_1,x_2,t)$, it is possible to reconstruct $\nu=\nabla P\push\sigma_h$. Moreover, we show in Proposition \ref{Pp} below  that the pressure $p(t,\bx)$ is obtained from the solution  $P(t,\bx)$ of the system  through the relation
\be
\label{newdualinc4.5}
 p(t,\mathbf x) = P(t,\mathbf x) - \frac{1}{2}(x_1^2 + x_2^2),  \quad (t,\bx) \in [0, \tau ) \times \Omega.
\ee
The system is to be  solved, in the weak sense of (\ref{bb17}), given the following initial conditions 
\begin{equation}\label{hinitial}
h(0,\cdot) = h_0(\cdot ) \in W^{1,\infty}(\Omega _2 ), \qquad  (x_1, x_2) \in \Omega _2,
\end{equation}
\be
\label{newdualinc6}
\nu (0, \cdot ) = \nu _0 (\cdot )\;\; \text{\it compactly supported probability density in } L^r, \, r \in [1, \infty ),\ee
\be
P(0,\bx)=P_0(\bx)\in W^{1,\infty}(\Omega_{h_0}),\label{P0in}
\ee
satisfying the compatibility condition
\be
\nabla P_0\push\sigma_{h_0}=\nu_0.
\label{compds}
\ee


\smallskip
It is not difficult to show that,  formally, (\ref{newdualinc1})-(\ref{newdualinc6}) yields a stable solution of (\ref{sginc}), see \cite{dkgthesis}:
\begin{lemma}\label{formalyield}
A sufficiently regular solution of (\ref{newdualinc1})-(\ref{newdualinc6}) yields a solution of (\ref{sginc}) with initial condition (\ref{hinitial}) and boundary conditions (\ref{cgspeedboundary})-(\ref{cgboundcon}).
\end{lemma}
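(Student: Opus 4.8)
The plan is to reverse the derivation that produced the dual system, starting from a sufficiently regular solution $(\nu, P, h)$ of (\ref{newdualinc1})-(\ref{newdualinc6}) and recovering all the primitive unknowns $\mathbf u^g$, $\mathbf u$, $p$, $\rho$ of (\ref{sginc}) on the moving domain $\Omega_h(t)$. First I would define the pressure by (\ref{newdualinc4.5}), i.e. $p = P - \tfrac12(x_1^2+x_2^2)$, and read off the geostrophic velocity from the gradient: since $\nabla P(t,\cdot)$ is the optimal map pushing $\sigma_h$ to $\nu$ and is of the form $\mathbf T = \nabla P$, setting $\mathbf y = \mathbf T(t,\mathbf x)$ gives $(y_1-x_1, y_2-x_2, y_3) = \nabla p$, which by (\ref{newchange}) means $u_2^g = \partial p/\partial x_1$, $-u_1^g = \partial p/\partial x_2$, and $-\rho = y_3 = \partial p / \partial x_3 - 0$; wait, more precisely $T_3 = y_3 = -\rho$ while also $y_3 = \partial_{x_3} p$, so $\rho = -\partial p/\partial x_3$. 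That already recovers the last line of (\ref{sginc}) (the geostrophic and hydrostatic balance relations) as an algebraic consequence of $\mathbf T = \nabla P$. Convexity of $P$, guaranteed by Brenier's theorem via (\ref{newdual trans}), is what makes $\mathbf T$ a genuine change of variables with inverse $\mathbf S = \mathbf T^{-1} = \nabla P^*$.

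Next I would construct the velocity field $\mathbf u$. The natural route is Lagrangian: define the flow map $\mathbf F$ on the (initial) domain by $\partial_t \mathbf F = \mathbf u(t,\mathbf F)$, and demand that $\mathbf F(t,\cdot)$ push the initial density $\sigma_{h_0}$ forward to $\sigma_{h(t)}$; differentiating the volume-preservation/incompressibility constraint $|\Omega_h(t)| = 1$ gives $\nabla\cdot\mathbf u = 0$ together with the kinematic free-surface condition, the first line of (\ref{cgboundcon}), and the no-flux condition (\ref{cgspeedboundary}) on the fixed walls and bottom. The key identity is the transport structure: differentiating $\mathbf T(t,\mathbf F(t,\mathbf x))$ in $t$ and using that $\mathbf w = J(\mathbf y - \nabla P^*)$ is the dual velocity transporting $\nu$ (equation (\ref{newdualinc1})-(\ref{newdualinc2})), one gets $D_t \mathbf T = J(\mathbf T - \mathbf x)$ along particle trajectories — this is the analogue of (\ref{cf2.3i})/(\ref{bbevolution}). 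Writing out the first two components of $D_t\mathbf T = J(\mathbf T - \mathbf x)$ and substituting $T_1 = x_1 + u_2^g$, $T_2 = x_2 - u_1^g$ recovers exactly the momentum equations (\ref{shallmom1})-(\ref{shallmom2})-style relations, namely the first two lines of (\ref{sginc}): $D_t u_1^g - u_2 + \partial_{x_1} p = 0$ and $D_t u_2^g + u_1 + \partial_{x_2} p = 0$. Finally $D_t\rho = 0$ (the third line of (\ref{sginc})) follows because $T_3 = -\rho$ is advected: $D_t T_3 = (J(\mathbf T - \mathbf x))_3 = 0$ by the structure of $J$. And $D_t h = u_3$ on the surface is precisely the kinematic boundary condition, while $p = 0$ there is $P = \tfrac12(x_1^2+x_2^2)$, i.e. the boundary condition defining the admissible class $\mathcal H$.

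The main obstacle is that all of this is only claimed \emph{formally}, so the real content of the lemma is organizing which regularity is "sufficient": one needs $P(t,\cdot)$ to be $C^2$ and uniformly convex so that $\mathbf T$ is a diffeomorphism and $\mathbf F$ is well-defined and volume-preserving, one needs enough time-regularity of $\nu$ and $P$ to differentiate along trajectories and justify the chain rule $D_t[\mathbf T \circ \mathbf F]$, and one needs $h$ to be a genuine single-valued $W^{1,\infty}$ function so that $\Omega_h(t)$ and its moving boundary make sense — this is exactly where Lemma \ref{lemmasingleval} (asserted below in the excerpt) is invoked, and where the free-boundary character of the problem makes the bookkeeping heavier than in the rigid case of \cite{benamou}. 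I would therefore present the argument as: (i) assume $P, P^*, \nu, h$ smooth with $D^2 P > 0$; (ii) define $p, \rho, \mathbf u^g, \mathbf F, \mathbf u$ as above; (iii) verify each of the five equations of (\ref{sginc}) and the three boundary conditions by direct differentiation, flagging at each step the minimal smoothness used; (iv) note that the initial conditions (\ref{hinitial}), (\ref{newdualinc6}), (\ref{P0in}), (\ref{compds}) translate into $h(0,\cdot) = h_0$, $\nu(0,\cdot) = \nu_0$, $p(0,\cdot) = p_0$ with the compatibility $\nabla P_0 \# \sigma_{h_0} = \nu_0$. The computation is routine bookkeeping; the substance is the careful statement of hypotheses and the appeal to single-valuedness of $h$, and I would refer the reader to \cite{dkgthesis} for the full details as the excerpt already signals.
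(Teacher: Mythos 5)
The paper does not actually contain a proof of this lemma: the sentence preceding it simply says the claim ``is not difficult to show, formally,'' and cites \cite{dkgthesis} for the details. So there is no internal proof to compare against, and your task is really to reconstruct the (unwritten) reverse derivation. Your sketch does this in the natural way, and the individual computations you record are correct: from $\mathbf T=\nabla P$ with $P=p+\tfrac12(x_1^2+x_2^2)$ one reads off $u_2^g=\partial p/\partial x_1$, $-u_1^g=\partial p/\partial x_2$, $-\rho=\partial p/\partial x_3$; the identity $D_t\mathbf T=J(\mathbf T-\mathbf x)$, expanded componentwise with $T_1=x_1+u_2^g$, $T_2=x_2-u_1^g$, $T_3=-\rho$ and $D_t\mathbf x=\mathbf u$, gives exactly the two momentum equations and $D_t\rho=0$; the boundary condition $p=0$ on $x_3=h$ is precisely the first-order optimality condition (\ref{mincond}) for the $h$-minimisation, and the kinematic condition is the statement that the free surface is advected by the flow.

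The one place where your plan as written is slightly circular is in the construction of $\mathbf u$: you ``define the flow map $\mathbf F$ by $\partial_t\mathbf F=\mathbf u(t,\mathbf F)$'' and simultaneously use $\mathbf F$ to define $\mathbf u$. The clean way to close this, and the one implicit in the Lagrangian reformulation (\ref{bbevolution}) of Section~2, is to go through dual space first: since $\mathbf w=J(\by-\nabla P^*)$ is (under your regularity hypotheses) Lipschitz, solve the dual Lagrangian ODE $\partial_t\mathbf Z=\mathbf w(t,\mathbf Z)$ with $\mathbf Z(0,\cdot)=\nabla P_0$, which advects $\nu$ by uniqueness for the transport equation; then set $\mathbf F(t,\cdot)=\nabla P^*(t,\mathbf Z(t,\cdot))$ and $\mathbf u=(\partial_t\mathbf F)\circ\mathbf F^{-1}$. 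With this order of construction, $\mathbf F(t,\cdot)\push\sigma_{h_0}=\sigma_{h(t)}$ and $\det D\mathbf F\equiv 1$ (hence $\nabla\cdot\mathbf u=0$) follow from the pushforward relations $\nu=\nabla P\push\sigma_h$ and $\nu=\mathbf Z\push\nu_0$ rather than being postulated, and the identity $\mathbf w(\mathbf Z)=J(\mathbf Z-\mathbf F)$, which uses $\nabla P^*\circ\nabla P=\mathrm{id}$, delivers $D_t\mathbf T=J(\mathbf T-\mathbf x)$ non-circularly. Apart from tightening that point, your proposal does what the paper delegates to \cite{dkgthesis}: it is a formal bookkeeping argument whose substance is the list of regularity assumptions ($P$ $C^2$ and strictly convex so $\nabla P$, $\nabla P^*$ are genuine diffeomorphisms; $h$ single-valued, which you correctly tie to Lemma~\ref{lemmasingleval}; $\mathbf w$ Lipschitz in space, continuous in time).
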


We can now state the main theorem. The proof is presented in section \ref{newmain}.

\begin{theorem}\label{new5.5}

Let $1 \leq r < \infty $ and let $\nu _0 \in L^r (\Lambda _0 )$ be an initial potential density with support in $\Lambda _0$, where $\Lambda _0 \subset \Lambda_2\times[-R_0,0)$ with $R_0>0$ and $\Lambda_2$ is a bounded open set in $\mathbb{R} ^2$. 
Let $c(\cdot ,\cdot )$ be given by (\ref{newcost}). 

 Then the system of semi-geostrophic equations in dual variables (\ref{newdualinc1})-(\ref{newdualinc6}) with given conditions (\ref{hinitial}), (\ref{P0in}) satisfying the compatibility condition (\ref{compds}), has a stable weak solution $(h,  P)$ such that $\nu  = \mathbf T \textrm{\#} \sigma_h $, where $\mathbf T = \nabla P$, $\sigma_h = \chi _{\Omega _2 \times [0, h ]}$, and $\nu$ has compact support.
 
 This solution satisfies:
 \begin{enumerate}[label=(\textit{\roman{*}}), ref=\textit{(\roman{*})}]
	\item \[ \nu (\cdot , \cdot ) \in L^\infty ([0, \tau ); L^r( \Lambda) ), \qquad \left\| \nu (t, \cdot )\right\| _{L^r (\Lambda )} \leqslant \left\| \nu _0 (\cdot )\right\| _{L^r (\Lambda )}, \qquad \forall\; t \in [0, \tau ],
\]
\item  \[ P (t, \cdot ) \in L^\infty ([0, \tau ); W^{1, \infty } (\Omega _2)), \qquad \left\| P(t, \cdot )\right\| _{W^{1, \infty } (\Omega _{\freemin})} \leqslant C = C(\freemin, \Lambda, c(\cdot , \cdot )),\] \[ \forall\;t \in [0, \tau ], \]
\item \[ h(t, \cdot ) \in W^{1, \infty }(\Omega _2 ), \qquad \textrm{ for all } t \in [0, \tau ), \]
\end{enumerate}
where  $\Lambda $ is a bounded open domain in $\mathbb{R}^3$ containing $supp (\nu )$ for all $ t\in[0,\tau)$.
\end{theorem}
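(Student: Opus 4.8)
The plan is to realise the dual system (\ref{newdualinc1})--(\ref{newdualinc6}) as a Hamiltonian flow in the space of compactly supported probability measures, with Hamiltonian the dual energy $\mathcal{E}(t,\nu)$ of (\ref{newstablefirst}), and then to invoke the existence theory for such ODEs from \cite{ambgangbo}, following the template already used in \cite{CGP1}. The first and most substantial task is the \emph{static} problem: for a fixed admissible potential density $\nu$ supported in $\Lambda$, analyse the double infimum in (\ref{newstablefirst}). For fixed $h$ the inner problem is a Monge--Kantorovich problem with the cost (\ref{newcost}); since $c$ differs from $\tfrac{1}{2}|\mathbf x-\mathbf y|^2$ only by terms that are affine in $\mathbf x$ and in $\mathbf y$, Brenier's polar factorisation applies and gives a unique optimal map $\nabla P$ with $P$ convex. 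The outer minimisation over $h$ — equivalently over $\sigma_h\in\mathcal{H}$ (the space (\ref{calH})) — is the genuinely new ingredient relative to \cite{benamou}. First I would show that $\mathcal{H}$ is compact for the narrow topology, using the uniform bounds (\ref{hass}), (\ref{omegaH}) and the special layered structure (\ref{cgfreeOmega}) of the admissible domains, and that $\sigma_h\mapsto E_\nu(\sigma_h)$ is lower semicontinuous, so that a minimiser $\sigma_{\bar h}$ exists. Uniqueness, and the fact that $\bar h$ is an honest single-valued function with no overhang, should follow from strict-convexity considerations together with the sign of the $-x_3y_3$ term in $c$ and the range constraint $y_3\in[-\tfrac{1}{\delta},-\delta]$ coming from (\ref{densityest}); this is exactly the content of Lemma \ref{lemmasingleval}. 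I would record here the a priori Lipschitz bound $\|\nabla\bar P\|_{L^\infty}\le C(\Lambda)$, which holds because $\nabla\bar P=\bar{\mathbf T}$ takes values in the bounded set $\Lambda$.

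Second, I would identify the superdifferential (in the sense of \cite{ambgangbo}) of the Hamiltonian $\nu\mapsto\mathcal{E}(t,\nu)$ and show it contains the divergence-free field $\mathbf w=J(\mathbf y-\nabla\bar P^{*})$, where $\bar P^{*}$ is the Legendre transform of the minimiser $\bar P$, equivalently the optimal map pushing $\nu$ back onto $\sigma_{\bar h}$. The key computation is a first-variation/envelope argument: perturbing $\nu$ along a transport plan and using the optimality of both $\bar{\mathbf T}$ and $\sigma_{\bar h}$, one differentiates through the two nested infima; the resulting linear functional on perturbations is represented by $\mathbf y-\nabla\bar P^{*}$, and composing with the symplectic matrix $J$ (which encodes the momentum-conserving constraint underlying the change of variables (\ref{newchange})) produces the Hamiltonian vector field $\mathbf w$ of (\ref{newdualinc2}). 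In the same step I would verify the remaining structural hypotheses of \cite{ambgangbo}: that $\mathcal{E}(t,\cdot)$ is bounded on sets of measures with uniformly bounded support and enjoys the requisite regularity (upper semicontinuity and semiconcavity along generalised geodesics), that its superdifferential is nonempty and locally bounded, and — using the explicit form of $\mathbf w$ together with the bound $\|\nabla\bar P^{*}\|_{L^\infty}\le C$ — the a priori estimate $|\mathbf w(t,\mathbf y)|\le C(1+|\mathbf y|)$, which confines $\mathrm{supp}\,\nu(t,\cdot)$ to a ball whose radius grows at most linearly in $t$ and thus furnishes the fixed bounded domain $\Lambda$.

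Third, with these verifications in place, the abstract theorem of \cite{ambgangbo} yields a narrowly continuous curve $t\mapsto\nu(t,\cdot)$ of compactly supported probability measures solving the continuity equation (\ref{newdualinc1}) with velocity $\mathbf w(t,\cdot)$ in the weak sense of (\ref{bb17}), with $\nu(0,\cdot)=\nu_0$; the compatibility condition (\ref{compds}) is what makes $\nabla P_0$ a legitimate starting point. Since each $\mathbf w(t,\cdot)$ is divergence-free and the associated flow is measure preserving, the $L^r$ norm of $\nu$ is non-increasing in $t$, giving (\textit{i}). For each $t$ one then \emph{reconstructs} $\bar h(t,\cdot)$ and $\bar P(t,\cdot)$ as the minimiser and optimal potential of the static problem applied to $\nu(t,\cdot)$, which is precisely (\ref{newdual trans})--(\ref{newdualinc5}); the Lipschitz bound from Step 1 gives (\textit{ii}). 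Upgrading the regularity of $h$ to $W^{1,\infty}(\Omega_2)$ for (\textit{iii}) is done by writing $\bar P(t,\cdot)=\tfrac{1}{2}(x_1^2+x_2^2)+p(t,\cdot)$ and using the defining relation of the free surface $\{p=0\}$ together with the Lipschitz bounds on $\nabla\bar P$ and the uniform bounds $\delta<\rho<\tfrac{1}{\delta}$ on the last component; this also retroactively justifies the standing assumptions (\ref{hass}), (\ref{omegaH}) and (\ref{Lambda0}). That $(h,P)$ is then a weak solution of the full system — and hence, by Lemma \ref{formalyield}, a stable solution of (\ref{sginc}) with the free boundary conditions (\ref{cgspeedboundary})--(\ref{cgboundcon}), with pressure recovered via (\ref{newdualinc4.5}) and Proposition \ref{Pp} — follows by assembling (\ref{newdualinc1}) with the reconstruction identities.

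I expect the crux of the argument to be Steps 1--2: the simultaneous control of the inner optimal-transport problem and the outer minimisation over the free surface $h$. Unlike the rigid-boundary case of \cite{benamou}, the domain $\Omega_h$ carrying the mass $\sigma_h$ is itself unknown, so one must establish that the minimising domain depends stably enough on $\nu$ for the envelope computation of the superdifferential to go through, and one must exclude non-single-valued minimisers. Proving compactness of $\mathcal{H}$ in the correct topology, lower semicontinuity of the nested functional, and then pushing the first-variation argument through two coupled infima while retaining the explicit identification $\mathbf w=J(\mathbf y-\nabla\bar P^{*})$ is where essentially all of the new work beyond \cite{benamou,gangbo,CGP1} is concentrated.
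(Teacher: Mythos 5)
Your high-level strategy matches the paper exactly: formulate the dual system as a Hamiltonian flow with Hamiltonian given by the nested infimum (\ref{newstablefirst}), verify (H1)--(H3), compute the superdifferential and identify it with $\mathbf w$, invoke the abstract Theorem~\ref{ag6.6}, and then recover $(\bar h,\bar P)$ and the stated regularity from the static minimisation. Steps~2 and~3 of your outline essentially reproduce Propositions~\ref{newHok}, \ref{newmainalt} and the reassembly at the end.

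The gap is concentrated precisely where you say the crux is, namely the static problem in Step~1, and it is twofold. First, existence of a minimiser. You propose to get this from compactness of $\mathcal{H}$ in the narrow topology and lower semicontinuity. But $\mathcal{H}$ is \emph{not} closed in the relevant weak topology: a weak limit of $\sigma_{h_n}=\chi_{\Omega_{h_n}}$ is merely some $\sigma\in L^1$ with $0\leq\sigma\leq 1$, not necessarily a characteristic function of a subgraph set. The paper handles this by working in weak-$L^1$ via the Pettis criterion, extracting the limit $\sigma$, defining $h=\int_0^\infty\sigma\,dx_3$, and then running a Lemma~\ref{lemmasingleval}-type rearrangement argument to show $E_\nu(\sigma_h)\leq E_\nu(\sigma)$, i.e.\ one can always drop to a genuine $\sigma_h\in\mathcal H$ without raising the energy. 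Your outline skips this: naive narrow compactness plus lower semicontinuity only yields a minimiser in the closure of $\mathcal H$, not in $\mathcal H$ itself. Second, uniqueness of the minimising $h$. You assert this ``should follow from strict-convexity considerations,'' but the paper explicitly disclaims that route (``in our case it does not seem straightforward to prove that the energy functional is strictly convex with respect to $h$''). Instead it passes to the Kantorovich dual (Problem~\ref{kant}), proves Proposition~\ref{sed3.4}, and derives uniqueness of $h$ from the single-variable minimisation of $\Pi_P(x_1,x_2,\cdot)$ in (\ref{Pip}), using the sign of $\partial_{x_3}P=-\rho<0$. Lemma~\ref{lemmasingleval} by itself only excludes multi-valued (overhanging) minimisers; it does not give uniqueness among single-valued ones, and conflating the two is where your argument would break down. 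The Monge--Kantorovich dual machinery of Section~3.3.1--3.3.4 is exactly what replaces the missing convexity, and is the genuinely new work your proposal elides.
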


\subsection{The minimisation problem (\ref{newstablefirst})}
\label{Hsect}

In the rest of this section, we fix the time $t\in(0,\tau)$ and often drop the explicit dependence on it from the equations. 

\smallskip
Our aim is  to  prove existence and uniqueness of a minimiser of the functional $E_\nu(h)$ given by (\ref{newstablefirst}). We do not follow the strategy employed for the proof of the analogous result for the 2-dimensional problem. Indeed, in our case it does not seem straightforward to prove that the energy functional is strictly convex with respect to $h$. To prove uniqueness of the minimiser,  we will consider the Monge-Kantorovich formulation of the problem, following what done in \cite{sedjro} for the more difficult case of a forced axisymmetric flow.

To be able to prove that the minimisation problem (\ref{newstablefirst}) admits a solution, we first consider what conditions the problem imposes on the minimisation space ${\cal H}$. 

We start by  showing that, for every fixed value of $t<\tau$,  the minimiser has to correspond to a well defined, single-valued function $h(x_1,x_2)\in L^1\cap L^2(\Omega_2)$. 
\begin{lemma}\label{lemmasingleval}
 The minimiser  of (\ref{newstablefirst}) is given by a  $\sigma_h$ corresponding to  $\Omega_h=\Omega_2\times [0,h(x_1,x_2)]$ with $h(x_1,x_2)\in L^1\cap L^2(\Omega_2)$.
  \end{lemma}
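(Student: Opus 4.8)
The plan is to show that any minimising probability measure $\sigma \in \mathcal{H} \subset \mathcal{P}_{ac}(\mathbb{R}^3)$ for the functional in (\ref{newstablefirst}) must be the indicator of a set of the form $\Omega_2 \times [0, h(x_1,x_2)]$, i.e.\ that among all admissible densities supported in $\Omega_H = \Omega_2 \times [0,H)$ with total mass $1$, the optimiser is ``bottom-heavy'' in the vertical direction. The key structural fact is that the cost function $c(\mathbf x,\mathbf y)$ in (\ref{newcost}) is linear and decreasing in $x_3$: the only dependence on $x_3$ is the term $-x_3 y_3$, and since $\nu$ is supported where $y_3 \in [-\tfrac1\delta, -\delta]$, we have $-x_3 y_3 = x_3 |y_3|$, which is strictly increasing in $x_3$. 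Hence, heuristically, mass placed at a large height $x_3$ is ``expensive'' no matter where it is transported, so an optimal configuration should push all the mass as low as possible, subject only to the fixed-mass constraint and the horizontal footprint $\Omega_2$.

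First I would set up a rearrangement/competitor argument. Given any candidate $\sigma_h \in \mathcal{H}$ with optimal map $\mathbf T$ attaining $E_\nu(\sigma_h)$, construct a competitor $\sigma_{\tilde h}$ obtained by vertically rearranging the mass of $\sigma_h$: for each fixed $(x_1,x_2)$, replace the vertical slice by the ``stacked'' slice occupying $[0, \tilde h(x_1,x_2)]$ where $\tilde h(x_1,x_2) := \int_0^\infty \sigma_h(x_1,x_2,s)\,ds$ (the vertical mass profile). This rearrangement preserves the horizontal marginal and total mass, and since the rearrangement moves mass monotonically downward (a vertical analogue of the Hardy–Littlewood inequality), for a suitable transport plan one can bound the cost of transporting $\sigma_{\tilde h}$ to $\nu$ by that of transporting $\sigma_h$ to $\nu$. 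Concretely, compose the downward rearrangement map with $\mathbf T$ to get an admissible map pushing $\sigma_{\tilde h}$ to $\nu$; because $c$ is monotone increasing in $x_3$ and the other two coordinates are untouched, the cost does not increase, so $E_\nu(\sigma_{\tilde h}) \leq E_\nu(\sigma_h)$. By minimality, equality holds, and then strict monotonicity of $x_3 \mapsto x_3|y_3|$ (recall $|y_3| \geq \delta > 0$ on $\mathrm{supp}\,\nu$) forces the minimiser itself to already be in stacked form, i.e.\ $\sigma_h = \chi_{\Omega_2\times[0,h]}$ for some measurable $h$; in particular $h$ is single-valued. Finally, the membership $h \in L^1 \cap L^2(\Omega_2)$ is exactly the content of (\ref{l1})--(\ref{l2}): $\|h\|_1 = \int_\Omega d\sigma_h = 1$ from the mass normalisation, and $\|h\|_2^2 \leq \frac{2}{\delta} E \leq C_0$ from the energy bound together with the density bound (\ref{densityest}), provided one knows that the minimal energy $\mathcal{E}(t,\nu)$ (equivalently the energy of the minimiser) is finite.

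The main obstacle I anticipate is making the rearrangement step fully rigorous at the level of transport \emph{plans} rather than maps: one must check that the downward vertical rearrangement of $\sigma_h$ can be coupled with the optimal plan for $(\sigma_h,\nu)$ in a measurable way so that the resulting coupling of $(\sigma_{\tilde h},\nu)$ is admissible and has no larger cost — essentially an application of a monotone rearrangement (Hardy–Littlewood) inequality in the single variable $x_3$, fibred over $(x_1,x_2)$ and over the target $\mathbf y$. Care is needed because the rearrangement is only along one coordinate while the transport acts on all three; the fact that $c$ splits as $[\,\text{function of }(x_1,x_2,y_1,y_2)\,] + x_3|y_3|$ is what makes this work, since the first bracket is unaffected by vertical rearrangement and the second is handled slicewise. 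A secondary point is ruling out overhanging profiles: one needs that the optimal $\sigma_h$ is genuinely $0$ or $1$ a.e.\ (so that ``stacked'' means an interval $[0,h]$ and not a measure with density in $(0,1)$) — but this is immediate since every competitor in $\mathcal{H}$ is by definition of the form $\chi_{\Omega_{h}}$, so the competitor $\sigma_{\tilde h}$ must be reinterpreted/shown to again be an indicator, which holds because vertically stacking an indicator yields an indicator. Once these points are dispatched, the conclusion of the lemma follows.
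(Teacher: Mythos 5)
Your proposal is correct and follows essentially the same route as the paper: both arguments construct a vertical rearrangement map that fixes $(x_1,x_2)$ and is monotone in $x_3$, exploit the fact that the cost $c$ splits into a horizontal part plus $-x_3y_3$ with $y_3<0$ on $\operatorname{supp}\nu$ so that lowering mass cannot increase cost, compose this map with the optimal map and compare energies, and then read off $h\in L^1\cap L^2(\Omega_2)$ from (\ref{l1})--(\ref{l2}). The paper simply phrases the comparison in the opposite direction (a map $\mathbf R=(x_1,x_2,x_3+\varphi)$ with $\varphi\ge 0$ pushing $\sigma_h$ up to $\sigma_{\tilde h}$) and needs no Hardy--Littlewood machinery since the pointwise inequality $c(\mathbf R(\mathbf x),\mathbf y)\ge c(\mathbf x,\mathbf y)$ already suffices.
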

\begin{proof}
Suppose that $\tildh $ is multi-valued and define the corresponding domain as  $\widetilde{\Omega}(t)$.  
  Define $\optfreem_{\tildh} := \chi_{\widetilde{\Omega}(t)} $. Choose a single valued function $h(x_1,x_2)$ such that $|\widetilde{\Omega}|=|\Omega_h|$, and transport map $\mathbf R$ such that
  \[ \mathbf R \push \optfreem_h= \optfreem_{\tildh} ,\]
where $\optfreem_h := \chi _{\Omega _h}$.  The existence of such a map $R$  is guaranteed by standard optimal transport results. 
 We choose $h$ in such a way that $\mathbf R$ can be expressed as $\mathbf R = (R_1(\mathbf x), R_2(\mathbf x), R_3(\mathbf x)) = (x_1, x_2, x_3 + \varphi (x_1, x_2, x_3))$ where $\varphi (x_1, x_2, x_3) \geqslant 0$ for all $(x_1, x_2, x_3) \in \Omega _2 \times [0, h]$.  Let $\nu \in \mathcal{P}_{ac}(\Lambda )$ and let $\widetilde{\mathbf T}$ denote the optimal map in the transport of $\optfreem_{\tildh}$ to $\nu $ with cost function (\ref{newcost}).  Then, since $\widetilde{\mathbf T} \circ \mathbf R \push \optfreem_h = \nu $ and $\widetilde{T}_3$ is negative, 
 we have
\begin{eqnarray*}
E_\nu (\tilde{\sigma}_h) &&= \inf _{{\mathbf T} \textrm{\#} \optfreem_{\tildh}  = \nu }\int_{\mathbb{R}^3} \! c(\mathbf x, {\mathbf T}(\mathbf x))\optfreem_{\tildh} (\mathbf x)\, d\mathbf x\\ &&= \int _{\mathbb{R}^3}\! c(\mathbf x, \widetilde{\mathbf T}(\mathbf x))\optfreem_{\tildh} (\mathbf x)\, d\mathbf x=\int _{\mathbb{R}^3}\! c(\mathbf R(\mathbf x), \widetilde{\mathbf T}\circ \mathbf R(\mathbf x))\optfreem_h (\mathbf x)\, d\mathbf x\\
&&\geqslant \int _{\mathbb{R}^3}\! c(\mathbf x, \widetilde{\mathbf T}\circ \mathbf R(\mathbf x))\optfreem_h (\mathbf x)\, d\mathbf x\geqslant \inf _{{\mathbf T} \textrm{\#} \optfreem_h = \nu }\int_{\mathbb{R}^3} \! c(\mathbf x, {\mathbf T}(\mathbf x))\optfreem_h (\mathbf x)\, d\mathbf x = E_\nu (\sigma_h ).
\end{eqnarray*}
Since $\tildh $ is an arbitrary multi-valued function, we conclude that any multi-valued upper boundary will have a corresponding single valued upper boundary which reduces the energy associated with the flow.  

The property that $h(x_1,x_2)\in L^1\cap L^2(\Omega_2)$ follows from (\ref{l1}) and (\ref{l2}).
\end{proof}

We now define the subset ${\cal H}$ of ${\cal P}_{ac}(\R^3)$ on which we minimise the energy.

\begin{definition}\label{calhdef}
%
%
%
We define the class
$\mathcal{H}\subset \mathcal{P}_{ac}(\mathbb{R}^3)$ by
\begin{equation}\label{calH}
	{\mathcal{H}} := \left \{
	 \sigma_{h}(t,\cdot)\in\mathcal{P}_{ac}(\mathbb{R}^3)\,\bigg|\;h\in {\cal H}_0\right\} , 
	\end{equation}
where the ${\cal H}_0\subset L^1\cap L^2(\Omega_2)$ is given as\be	
{\cal H}_0=\left\{h:[0, \tau ) \times \Omega _2 \to [0, \infty),\;h(t,\cdot)\in L^{1}(\Omega_2),\; \|h(t,\cdot)\|_1=1, \int_{\R^3}x_3d\sigma_h\leq 2C_0\right\}
\label{H0}\ee
where $\sigma_h$ is defined in (\ref{sigmah}) and $C_0$ is as in (\ref{l2}).

	\end{definition}

Our first aim is to show that the functional (\ref{newstablefirst}) admits  a minimizer in this space. 

\begin{proposition}
The functional (\ref{newstablefirst}) admits a minimising pair $(\sigma_h,T)$, where  $\sigma_h \in \mathcal{H}$, and T is the optimal map $T$ such that $\nu  = \mathbf T \textrm{\#} \sigma_h $.
\end{proposition}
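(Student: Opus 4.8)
The plan is to use the direct method of the calculus of variations, working with the relaxed (Monge--Kantorovich) formulation of the inner infimum in \eqref{newstablefirst} to ensure lower semicontinuity, and exploiting the constraints built into $\mathcal{H}$ (and $\mathcal{H}_0$) to extract a convergent minimising sequence. First I would observe that the functional ${\cal E}(t,\nu)=\inf_{\sigma_h\in\mathcal{H}}E_\nu(\sigma_h)$ is bounded below: since $\nu$ has support in $\Lambda\subset\Lambda_2\times[-\tfrac1\delta,-\delta]$ and every admissible $\sigma_h$ is supported in $\Omega_2\times[0,H)$, the cost $c(\mathbf x,\mathbf y)=\tfrac12\{|x_1-y_1|^2+|x_2-y_2|^2\}-x_3y_3$ is bounded on $\mathrm{supp}(\sigma_h)\times\mathrm{supp}(\nu)$, uniformly in $h\in\mathcal{H}_0$ (here one uses $x_3\in[0,H)$, $y_3\in[-\tfrac1\delta,-\delta]$, and $(x_1,x_2)\in\Omega_2$, $(y_1,y_2)\in\Lambda_2$ both bounded). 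Hence a minimising sequence $(\sigma_{h_n},\mathbf T_n)$ exists, with $\mathbf T_n\#\sigma_{h_n}=\nu$ and $\int_{\R^3}c(\mathbf x,\mathbf T_n(\mathbf x))\,\sigma_{h_n}(\mathbf x)\,d\mathbf x \to {\cal E}(t,\nu)$.

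Next I would establish compactness of the minimising sequence. Each $\sigma_{h_n}=\chi_{\Omega_{h_n}}$ is a probability density bounded by $1$ and supported in the fixed bounded set $\Omega_H=\Omega_2\times[0,H)$; therefore, up to a subsequence, $\sigma_{h_n}\rightharpoonup\sigma$ weakly-$*$ in $L^\infty(\Omega_H)$ and narrowly as measures. The extra moment bound $\int x_3\,d\sigma_{h_n}\le 2C_0$ passes to the limit. A priori the weak limit $\sigma$ need only satisfy $0\le\sigma\le1$, so I must argue that $\sigma$ is again a characteristic function of a set of the form $\Omega_2\times[0,h]$, i.e. $\sigma\in\mathcal{H}$. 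This is exactly where Lemma \ref{lemmasingleval} does the work: replacing any competitor by the single-valued vertical rearrangement only decreases $E_\nu$, so one may assume from the start that each $h_n$ is single-valued, $h_n(x_1,x_2)\in L^1\cap L^2(\Omega_2)$ with $\|h_n\|_1=1$ and $\|h_n\|_2^2\le C_0$; the $L^2$ bound gives weak $L^2$-compactness of the $h_n$, and since the sets $\Omega_{h_n}$ are subgraphs, the weak-$*$ limit $\sigma$ is itself the subgraph characteristic function $\chi_{\Omega_2\times[0,h]}$ with $h$ the weak limit of $h_n$ (monotone/subgraph structure upgrades weak $L^2$ convergence of $h_n$ to the desired convergence of $\chi_{\Omega_{h_n}}$). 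For the transport maps, I would instead keep track of the associated transport plans $\gamma_n=(\mathrm{Id}\times\mathbf T_n)\#\sigma_{h_n}\in\Pi(\sigma_{h_n},\nu)$, all supported in the fixed compact set $\overline{\Omega_H}\times\overline{\Lambda}$, hence narrowly precompact; pass to a subsequential narrow limit $\gamma\in\Pi(\sigma,\nu)$.

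Then I would pass to the limit in the cost. Since $c$ is continuous and bounded on the relevant compact set, narrow convergence $\gamma_n\rightharpoonup\gamma$ gives $\int c\,d\gamma_n\to\int c\,d\gamma$, so $\int c\,d\gamma={\cal E}(t,\nu)$. Because $\sigma=\chi_{\Omega_2\times[0,h]}$ is absolutely continuous and $c$ satisfies the twist condition in the relevant variables (the cost is $\tfrac12|x'-y'|^2 - x_3y_3$, whose mixed Hessian in $\mathbf x,\mathbf y$ is $-\,\mathrm{diag}(-1,-1,1)\cdot$... precisely the non-degenerate structure used for Brenier-type existence, cf.\ the discussion around \eqref{bbcost}--\eqref{bbdual3} and \eqref{newcost}), the optimal plan between $\sigma$ and $\nu$ is induced by a unique map $\mathbf T=\nabla P$ for a suitable $c$-convex (here: convex after the quadratic adjustment) potential $P$; in particular $\int c\,d\gamma\ge E_\nu(\sigma)=\int c(\mathbf x,\mathbf T(\mathbf x))\,\sigma(\mathbf x)\,d\mathbf x$. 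Combined with ${\cal E}(t,\nu)=\inf_{\sigma_h\in\mathcal{H}}E_\nu(\sigma_h)\le E_\nu(\sigma)$ and $\int c\,d\gamma={\cal E}(t,\nu)$, this forces $E_\nu(\sigma)={\cal E}(t,\nu)$, i.e.\ $(\sigma,\mathbf T)$ is the desired minimising pair, with $\mathbf T$ the optimal map and $\sigma\in\mathcal{H}$.

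\textbf{Main obstacle.} The delicate point is not the lower semicontinuity of the cost (which is easy given the uniform compact supports) but showing that the weak-$*$ limit of the indicator densities $\sigma_{h_n}$ stays in $\mathcal{H}$ — a priori a weak limit of $\{0,1\}$-valued functions is only $[0,1]$-valued. One must genuinely use that competitors may be taken to be \emph{subgraphs} (Lemma \ref{lemmasingleval}) together with the uniform $L^1\cap L^2$ control on $h_n$ from \eqref{l1}--\eqref{l2}, so that the vertical "mass at a column" $h_n(x_1,x_2)$ converges in a strong enough sense to identify the limit domain as $\Omega_2\times[0,h]$ with $h\in\mathcal{H}_0$; equivalently, one shows no mixing/layering can occur in the limit because any such limiting density would be strictly improvable in energy by its subgraph rearrangement, contradicting minimality. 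Handling this rearrangement step carefully (and checking it is compatible with the constraint $\mathbf T\#\sigma_h=\nu$, which is why one argues at the level of plans $\gamma_n$ rather than maps) is the heart of the proof.
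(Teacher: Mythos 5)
There is a genuine gap in the compactness step, and it sits exactly where you flagged the ``main obstacle'' but then did not actually fix it. You claim that because the sets $\Omega_{h_n}$ are subgraphs, weak $L^2$ convergence of $h_n$ ``upgrades'' to the statement that the weak-$*$ limit of $\chi_{\Omega_{h_n}}$ is again $\chi_{\Omega_2\times[0,h]}$. That is false. Take $\Omega_2=(0,1)$ and $h_n(x_1)=1+\tfrac12\sin(2\pi n x_1)$, all admissible single-valued profiles; then $h_n\rightharpoonup h\equiv 1$ weakly in $L^2(\Omega_2)$, but the indicators $\chi_{\Omega_{h_n}}$ converge weakly-$*$ to a density $\sigma$ which equals $1$ for $x_3<\tfrac12$, equals $\tfrac12$ on the slab $\tfrac12<x_3<\tfrac32$, and vanishes above; $\sigma$ is not an indicator and in particular is not in $\mathcal{H}$. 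The subgraph structure does nothing to prevent this homogenisation in the horizontal variables. Consequently, the later chain ``$\mathcal{E}(t,\nu)=\inf_{\sigma_h\in\mathcal{H}}E_\nu(\sigma_h)\leq E_\nu(\sigma)$'' is circular: it presupposes $\sigma\in\mathcal{H}$, which is precisely what you were trying to establish.

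The paper's proof accepts that the weak limit $\sigma$ may only satisfy $0\leq\sigma\leq1$, and repairs it by rearrangement rather than by any limiting argument on $h_n$. Concretely: define $h:=\int_0^\infty\sigma(x_1,x_2,x_3)\,dx_3$, check $h\in\mathcal{H}_0$, and then compare $\sigma$ with $\sigma_h=\chi_{\Omega_2\times[0,h]}$ via a vertical transport map $\mathbf R$ with $R_1=x_1$, $R_2=x_2$, $R_3(\bx)\geq x_3$ and $\mathbf R\push\sigma_h=\sigma$; because $y_3=T_3<0$ on $\Lambda$, the cost $-x_3 y_3$ is increasing in $x_3$, so pushing mass down (i.e.\ replacing $\sigma$ by $\sigma_h$) cannot increase the energy, exactly as in Lemma \ref{lemmasingleval}. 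Hence $E_\nu(\sigma_h)\leq E_\nu(\sigma)=\lim_n E_{\nu}(\sigma_{h_n})$ equals the infimum, and the minimiser can be taken in $\mathcal{H}$ after all. Your ``Main obstacle'' paragraph gestures at this (``strictly improvable in energy by its subgraph rearrangement''), and that \emph{is} the right idea; but it must replace, not accompany, the false claim that the limit is automatically an indicator. Once the rearrangement step is made the actual argument, the rest of your outline (relaxation to plans $\gamma_n$, narrow precompactness on the fixed compact $\overline{\Omega_H}\times\overline{\Lambda}$, lower semicontinuity of the cost, and extraction of an optimal map $\mathbf T=\nabla P$ by Theorem \ref{new3.3} since the final competitor $\sigma_h$ is absolutely continuous) matches the paper's strategy.
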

	
	\begin{proof}
	Let
	\begin{equation}\label{newstablefirstcop} \tilde{\cal E}(\nu)  = \inf _{\sigma_h\in{\mathcal{H}} } \left\{ \inf _{r \in \Gamma(\sigma_h,\nu) }\int_{\mathbb{R}^3} \! c(\mathbf x, \mathbf y) \ r(\mathbf x,\mathbf y) d\mathbf x\ d\mathbf y \right\} ,
\end{equation}
where $\Gamma(\sigma_h,\nu)$ is the set of all bounded measures  $\mu \in \mathcal{P}\left(\Lambda \times \Omega_2\times [0,\infty) \right)$ with $ \pi_1\textrm{\#} \mu = \nu$, $ \pi_2\textrm{\#}\mu= \sigma_h$ where 
$\pi_1$ is the projection to $\Lambda$ and $\pi_2$ to $\Omega_2 \times [0,\infty) $.

By standard techniques using the narrow topology there exists an optimal $\mu\in \mathcal{P}\left(\Lambda \times \Omega_2\times [0,\infty) \right)$ approximated in the narrow topology by a sequence $\mu_n \in \Gamma(\sigma_{h_n},\nu)$, with $h_n \in \mathcal{H}_0$.

We consider ${\mathcal{H}} $ endowed with the weak-$L^1$-topology.
Note that the weak-$L^1$-convergence in ${\cal H}$ implies narrow convergence and that ${\cal H}$ is not closed in the weak-$L^1$-convergence. However, by Pettis criterium ${\cal H}$ is relative compact 
and hence w.l.o.g. we can assume that $\sigma_{h_n}$ converges $L^1$-weakly to $\sigma \in L^1 (\Omega_2 \times [0,\infty))$ with $0 \leq \sigma \leq 1$ and thus $\pi_2\textrm{\#}\mu = \sigma d\mathbf x$. Therefore  $\mu$ as an optimal transport plan from $\nu$ to $\sigma \in \mathcal{P}_{ac}(\mathbb{R}^3)$  is in fact given by an optimal map $T$.

Define $h := \int_0^\infty \sigma(x_1,x_2,x_3) dx_3$. It can easily be shown that $h \in \mathcal{H}_0$. 
We claim that $\sigma$ is actually equal to $\sigma_h$, so that there is a minimiser of the form required. 
The proof is similar to the proof of lemma \ref{lemmasingleval}, and is obtained by contradiction by showing that 
%
%
 $$
E_\nu(\sigma_h)= \inf _{{\mathbf T}:\,{\mathbf T} \textrm{\#} \sigma_h = \nu }\int_{\mathbb{R}^3} \! c(\mathbf x, {\mathbf T}(\mathbf x)) \optfreem_h(\mathbf x) \, d\mathbf x\leq 
 \inf _{{\mathbf T}:\,{\mathbf T} \textrm{\#} \sigma = \nu }\int_{\mathbb{R}^3} \! c(\mathbf x, {\mathbf T}(\mathbf x)) \sigma(\mathbf x) \, d\mathbf x=E_\nu(\sigma).
 $$
Indeed, consider the  transport map $\mathbf R$ such that
 $ \mathbf R \push \sigma_h = \sigma$,
where $\optfreem_h := \chi _{\Omega _h}$.  The existence of such a map $R$   is guaranteed by standard optimal transport results. 
It follows from the definition  of $h$ and the properties of $\sigma$ that $\mathbf R$  satisfies 
$
R_1(\bx)=x_1$, $R_2(\bx)=x_2$ and  $R_3(\bx)\geq x_3$.

Then as in the proof of Lemma \ref{lemmasingleval}, we deduce that $E_\nu(\sigma_h)\leq E_\nu(\sigma)$, hence the claim.

\end{proof}

\bigskip

In the remainder of this section, we will  show that, at each fixed time $t$,  there exists in fact a unique minimising pair $(\sigma_h,T)$ of the energy functional (\ref{newstablefirst}), with $\sigma_h\in{\cal H}$, and $T=\nabla P$ for a convex function $P$.

\subsubsection{Kantorovich formulation}
We assume that  $\nu \in \mathcal{P}_{ac}(\Lambda)$ is a given, compactly supported  density, and we consider the cost function $c(\bx,\by)$ defined by (\ref{newcost}).

The {\em Kantorovich dual} of the minimisation problem  (\ref{newstablefirst}) is the problem of maximising the functional  
\begin{eqnarray}\label{new26}
J_{(\optfreem , \nu )}(f, g) &=& \int_{\mathbb{R}^3} \! f(\mathbf x) \optfreem_h (\mathbf x)\, d \mathbf x + \int_{\Lambda } \! g(\mathbf y)\nu (\mathbf y) \, d \mathbf y,
\\
 f \in W^{1, \infty }(\mathbb{R}^3),\,g \in W^{1, \infty }(\Lambda ): && f(\mathbf x) + g(\mathbf y) \leqslant c(\mathbf x, \mathbf y) \textrm{ for all } (\mathbf x, \mathbf y) \in \mathbb{R}^3 \times \Lambda .
\nonumber \end{eqnarray}

It can be shown that the solution is unique and the key is in the notion of $c$-transfroms, defined by
$$
f^c(\by)=\inf_{\bx}(c(\bx,\by)-f(\bx)),\quad 
g^c(\bx)=\inf_{\by}(c(\bx,\by)-g(\by)).
$$
Then (see \cite{maroofi})  there exists a unique point $\bar\bx\in\R^3$ (respectively $\bar \by\in\R^3$), at which the infimum is attained, and which satisfies
\be
\nabla f^c(\by)=\nabla_yc(\bar\bx,\by), \quad 
\nabla g^c(\bx)=\nabla_xc(\bx,\bar \by).
\label{ctrans}\ee

\medskip
To derive explicitly the Kantorovich formulation in the present case, we write the cost $c(\bx,\by)$ given by (\ref{newcost}) as
\begin{eqnarray}
c(\bx, \by)&=&\frac 1 2 (x_1^2+x_2^2)+\frac 1 2 (y_1^2+y_2^2)-(x_1y_1+x_2y_2+x_3y_3)
\nonumber \\
&=&
-(\bx,\by)+\frac 1 2 (x_1^2+x_2^2)+\frac 1 2 (y_1^2+y_2^2),
\label{dcost}\end{eqnarray}
where $(\bx,\by)$ denotes the euclidean inner product in $\R^3$.

Then we can write the minimisation problem (\ref{newstablefirst}) in a relaxed form, and (\ref{newstablefirst})  can be formulated as the problem of finding $h\in {\cal H}_0$ and the optimal plan $\gamma\in \Gamma(\sigma_h,\nu)$ minimising
\be\label{newstablerel}
I(\gamma,h)=\int_{\Omega_h\times \Lambda}\left[-(\bx,\by)+\frac 1 2 (x_1^2+x_2^2)+\frac 1 2 (y_1^2+y_2^2)\right]d\gamma(\bx,\by),
\ee
where 

\begin{tabular}{ll}
$\sigma_h\in{\cal H} $ & with ${\cal H}$  given by (\ref{calH});
\\
$\nu$& is a given compactly supported probability measure in $\R^3$;
\\
$\Omega_h$& denotes the support of $\sigma_h$, given by (\ref{cgfreeOmega});
\\
$\Lambda$& denotes the support of $\nu$, as in (\ref{Lambda0});
\\
$ \Gamma(\sigma_h,\nu)$ & denotes the set of probability measure on the product space $\Omega_h\times \Lambda$ which take 
\\
 &$\sigma_h$
 and $\nu$ as marginals. 
\end{tabular}
%

\smallskip
The "marginal" condition  means that  each $\gamma\in\Gamma(\sigma_h,\nu)$ satisfies
$\gamma(A\times \Lambda)=\sigma_h(A)$ for every measurable set $A\subset \Omega$, and  $ \gamma(\Omega\times B)=\nu(B)$ for every measurable set $B\subset \Lambda$. 

\smallskip
The Kantorovich maximisation problem can be stated in terms of $P=\frac 1 2 (x_1^2+x_2^2)-f$ and $R=\frac 1 2 (y_1^2+y_2^2)-g$, where $f$, $g$ are as in (\ref{new26}). The problem is the following: 

\begin{pbm}[Kantorovich formulation]\label{kant}
Find $(P(\bx), R(\by)) \in C(\Omega_H)\times C(\Lambda)$  such that 
\be
P(\bx)+R(\by)\geq (\bx,\by),\quad (\bx,\by)\in\Omega_H\times \Lambda,
\label{PRcond}
\ee
and that maximise
\be
J(P,R)=\left[\int_{\Lambda}\left(\frac 1 2 (y_1^2+y_2^2)-R(\by)\right)d\nu(\by)+\inf_{h\in {\cal M}}\int_{\Omega_H}\left(\frac 1 2 (x_1^2+x_2^2)-P(\bx)\right)d\sigma_h(\bx)\right].
\label{MK}\ee
where ${\cal M}$ is the set of measurable function $h:\Omega_2\to[0,\infty)$. \end{pbm}

Note that, by construction, for any $P$, $R$ as above, and any measurable $h:\Omega_2\to [0,\infty)$, it holds that
\be
J(P,R)\leq I(\gamma,h).
\label{maxmin}\ee

It follows from the general theory (see appendix)  that the solution $(P,R)$ of this maximisation problem is such that $P$ and $R$ are Legendre transforms of each other, namely such that $R=P^*$, $P=R^*$, where  
$$
P^*(\by)=\sup_{\bx\in\Omega_H}((\bx,\by)-P(\bx));\qquad 
R^*(\bx)=\sup_{\by\in\Lambda}((\bx,\by)-R(\by)).
$$
In particular this implies that $P$, $R$ are convex functions, see Lemma \ref{newcconcavechar}.   

\subsubsection{Minimisation with respect to the map  $\mathbf T(\bx)$}\label{freeoptsection}

Suppose that $\sigma\in{\cal P}_{ac}(\R^3)$ is given. Then the maximisation as stated in Problem \ref{kant} is the classical optimal transport problem with respect to a quadratic cost between two probability measures  absolutely continuous  with respect to Lebesgue measure.   Details of the general theory are given in appendix, where we state that there exists  $f$ that maximises (\ref{new26}), and that the optimal transport plan between $\sigma_h(\bx)$ and $\nu(\by)$ is given by $id\times\nabla P$, where $P(\bx)=\frac 1 2 (x_1^2+x_2^2)-f(\bx)$,  see Lemma \ref{newcconcavechar}.

Indeed, in this case there exists an optimal transport map  $\mathbf T$ which solves  (\ref{new3.3}). This map is given by $\mathbf T=\nabla P$, with $P$ the maximiser in (\ref{MK}).   

\begin{theorem}\label{new3.3}
Assume that 
 $\Lambda \subset \mathbb{R}^3$ is a bounded open set.  Let $\optfreem _h\in \optFreem$, $\nu \in \mathcal{P}_{ac}(\Lambda )$ and $c(\cdot , \cdot )$ be defined by (\ref{newcost}).  
  Then, there exist maps $\mathbf T$ 
 and $\mathbf S$, 
 unique $\optfreem _h-$a.e. and $\nu -$a.e. respectively, and a convex function $P$ such that
\begin{enumerate}[label=(\textit{\roman{*}}), ref=\textit{(\roman{*})}]
	\item $\mathbf T = \nabla P$ is optimal in the transport of $\optfreem _h$ to $\nu $ with cost $c(\mathbf x, \mathbf y)$,
	\item $\mathbf S = \nabla P^*$ is optimal in the transport of $\nu $ to $\optfreem _h$ with cost $\tilde{c}(\mathbf y, \mathbf x) = c(\mathbf x, \mathbf y)$, where $P^*$ is defined in (\ref{legendredef},
\item $\mathbf S$ and $\mathbf T$ are inverses, i.e. $\mathbf S\circ \mathbf T(\mathbf x) = \mathbf x$ for $\optfreem _h-$a.e. $\mathbf x$ and $\mathbf T\circ \mathbf S(\mathbf y) = \mathbf y$ for $\nu -$a.e. $\mathbf y$.
\end{enumerate}
\end{theorem}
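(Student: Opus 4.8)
The plan is to reduce Theorem~\ref{new3.3} to Brenier's polar factorisation theorem for the quadratic cost. Using the rewriting (\ref{dcost}), for any transport plan $\gamma\in\Gamma(\sigma_h,\nu)$ one has
\[
\int c(\mathbf x,\mathbf y)\,d\gamma=\int\tfrac12(x_1^2+x_2^2)\,d\sigma_h+\int\tfrac12(y_1^2+y_2^2)\,d\nu-\int(\mathbf x,\mathbf y)\,d\gamma ,
\]
and the first two integrals depend only on the marginals: they are finite, because $\operatorname{supp}\sigma_h\subset\Omega_H$ and $\operatorname{supp}\nu\subset\Lambda$ are bounded, and they are independent of $\gamma$. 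Hence a plan minimises $\int c\,d\gamma$ over $\Gamma(\sigma_h,\nu)$ precisely when it maximises $\int(\mathbf x,\mathbf y)\,d\gamma$, equivalently when it minimises the quadratic cost $\int\tfrac12|\mathbf x-\mathbf y|^2\,d\gamma$; so the $c$-optimal plans and the quadratic-optimal plans coincide.

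First I would invoke the classical optimal transport / polar factorisation theorem for the quadratic cost, as recalled in Lemma~\ref{newcconcavechar} and the appendix (see also \cite{brenier,ambbook,villanioldnew}): since $\sigma_h\in\mathcal{P}_{ac}(\mathbb R^3)$ and $\nu\in\mathcal{P}_{ac}(\Lambda)$ both have bounded support, there is a convex function $P$ — the Kantorovich potential furnished by Problem~\ref{kant} with $\sigma_h$ held fixed, cf. Section~\ref{freeoptsection} — such that $\mathbf T:=\nabla P$ is the unique (up to $\sigma_h$-a.e. equality) optimal map pushing $\sigma_h$ forward to $\nu$ for the quadratic cost; by the equivalence above the induced plan is then also the unique $c$-optimal plan, so $\mathbf T$ is the $c$-optimal map, which is (\textit{i}). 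Running the same argument with $\sigma_h$ and $\nu$ interchanged and the transposed cost $\tilde c(\mathbf y,\mathbf x)=c(\mathbf x,\mathbf y)$ — here the boundedness of $\operatorname{supp}\sigma_h$ guarantees that the Legendre transform $P^*$ of (\ref{legendredef}) is everywhere finite, Lipschitz and convex — yields the optimal map $\mathbf S:=\nabla P^*$ pushing $\nu$ to $\sigma_h$, unique $\nu$-a.e., which is (\textit{ii}).

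It then remains to establish (\textit{iii}), the inversion, which I would obtain from Legendre duality: for the convex function $P$ one has $\mathbf y\in\partial P(\mathbf x)\iff\mathbf x\in\partial P^*(\mathbf y)$. Since $\sigma_h\ll\mathcal L^3$, $P$ is differentiable $\sigma_h$-a.e.; since $\nu\ll\mathcal L^3$, $P^*$ is differentiable $\nu$-a.e.; and $\mathbf T\push\sigma_h=\nu$. So for $\sigma_h$-a.e. $\mathbf x$, putting $\mathbf y=\mathbf T(\mathbf x)=\nabla P(\mathbf x)$ gives $\mathbf x\in\partial P^*(\mathbf y)$ and, $\mathbf y$ being a point of differentiability of $P^*$, $\partial P^*(\mathbf y)=\{\nabla P^*(\mathbf y)\}$, whence $\mathbf S(\mathbf T(\mathbf x))=\nabla P^*(\nabla P(\mathbf x))=\mathbf x$. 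The identity $\mathbf T\circ\mathbf S=\mathrm{id}$ holds $\nu$-a.e. by the symmetric argument, completing (\textit{iii}).

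I expect the only genuinely delicate points to be the ones forcing the supports to be bounded — so that the second moments are finite, the reduction to Brenier's theorem is legitimate, and $P,P^*$ are honest finite Legendre conjugates — together with the measure-theoretic bookkeeping in (\textit{iii}): one must check that the $\nu$-null set off which $P^*$ may fail to be differentiable is, via $\mathbf T\push\sigma_h=\nu$, the image of a $\sigma_h$-null set, so that $\mathbf S\circ\mathbf T=\mathrm{id}$ really does hold $\sigma_h$-a.e. All of this is classical once the cost has been put in the form (\ref{dcost}); the substantive work of this section lies elsewhere, in the minimisation over $\sigma_h\in\mathcal H$.
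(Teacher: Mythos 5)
Your proof is correct and is exactly the argument the paper leaves implicit: the paper states Theorem~\ref{new3.3} without writing out a proof, relying on the rewriting (\ref{dcost}) of the cost, the identification in Section~\ref{freeoptsection} of the problem with the classical quadratic-cost transport problem, and the $c$-concavity characterisation of Lemma~\ref{newcconcavechar} — and your proposal reconstructs precisely that chain: the marginal-only terms in (\ref{dcost}) reduce $c$-optimality to Brenier's theorem, the change $P = \tfrac12(x_1^2+x_2^2) - f$ produces the convex potential, and Legendre duality plus $\mathbf T\push\sigma_h=\nu$ closes (\textit{iii}). The one small slip is terminological: the relevant fact for (\textit{iii}) is that $\mathbf T^{-1}(N)$ is $\sigma_h$-null for any $\nu$-null $N$ (the \emph{preimage}, directly from $\sigma_h(\mathbf T^{-1}(N))=\nu(N)$), not that $N$ is the image of a $\sigma_h$-null set; the conclusion you draw is nonetheless correct.
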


%
We also have the following stability result.

\begin{lemma}\label{optnew3.4}
Assume that $\Lambda \subset \mathbb{R}^3$ is a bounded open set.  Let $c(\cdot , \cdot )$ be defined by (\ref{newcost}).  Define $E_\nu (\cdot )$ by (\ref{newenergyminfirst}).  Let $\sigma_{h_n}, \, \sigma_h \in {\cal H} $ and $\nu _n,\, \nu\in \mathcal{P}_{ac}(\Lambda )$ with $\sigma_{h _n}$ converging to $\sigma_h$ in ${\cal H}$ and $\nu _n$ converging narrowly to $\nu $ as $n \to \infty $.  Then $E_{\nu _n}(h _n) \to E_{\nu}(h )$ as $n \to \infty $, i.e.
\begin{equation}\label{newconverge}
\inf _{\overline{\mathbf T} \push \optfreem _{h_n} = \nu _n}\int_{\mathbb{R}^3} \! c(\mathbf x, \overline{\mathbf T}(\mathbf x))\optfreem _{h_n} \, d\mathbf x \longrightarrow \inf _{\overline{\mathbf T} \push \optfreem _h = \nu }\int_{\mathbb{R}^3} \! c(\mathbf x, \overline{\mathbf T}(\mathbf x))\optfreem _h\, d\mathbf x, \qquad \textrm{ as }n \to \infty .
\end{equation}
\end{lemma}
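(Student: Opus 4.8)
The plan is to prove the stability statement \eqref{newconverge} by establishing two inequalities: upper semicontinuity of $\liminf$ and lower semicontinuity of $\limsup$ of the energies $E_{\nu_n}(h_n)$. First I would recall that, by Theorem \ref{new3.3}, for each $n$ the infimum in $E_{\nu_n}(h_n)$ is attained by a genuine optimal map $\overline{\mathbf T}_n = \nabla P_n$ with $P_n$ convex, and likewise for the limit. Since $\sigma_{h_n}$ converges to $\sigma_h$ in $\mathcal H$ (weak-$L^1$, which implies narrow convergence of the measures) and $\nu_n \rightharpoonup \nu$ narrowly, and since all measures are supported in the fixed bounded sets $\Omega_H$ and $\Lambda$ (using \eqref{omegaH} and \eqref{Lambda0}) on which the cost $c$ from \eqref{newcost} is bounded and continuous, the standard stability theory of optimal transport plans under narrow convergence (see the appendix / \cite{ambbook, villanioldnew}) applies directly.

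The key steps, in order, are: (i) pass to the Monge--Kantorovich relaxation, writing $E_{\nu_n}(h_n) = \min_{\gamma_n \in \Gamma(\sigma_{h_n}, \nu_n)} \int c\, d\gamma_n$, which is legitimate because the optimal plan is induced by the optimal map $\overline{\mathbf T}_n$ and conversely; (ii) observe that the family $\{\gamma_n\}$ is tight, since all $\gamma_n$ are supported in the fixed compact $\overline{\Omega_H} \times \overline{\Lambda}$, so up to a subsequence $\gamma_n \rightharpoonup \gamma$ narrowly, and the marginal conditions pass to the limit, giving $\gamma \in \Gamma(\sigma_h, \nu)$; (iii) use continuity and boundedness of $c$ on this compact set to get $\int c\, d\gamma_n \to \int c\, d\gamma$, hence $\limsup_n E_{\nu_n}(h_n) \le \int c\, d\gamma$; but $\gamma$ need not be optimal for the limit problem, so this only gives $\liminf_n E_{\nu_n}(h_n) \ge E_\nu(h)$ is what we actually want, and the direction requires more care --- see below; (iv) for the reverse inequality, take the optimal plan $\gamma^\star$ for the limit problem $E_\nu(h)$ and approximate it by plans $\gamma_n^\star \in \Gamma(\sigma_{h_n}, \nu_n)$ with $\int c\, d\gamma_n^\star \to \int c\, d\gamma^\star$; this gluing/disintegration construction (composing $\gamma^\star$ with the optimal maps $\mathbf R_n$ transporting $\sigma_{h_n}$ to $\sigma_h$ and the analogous maps on the $\nu$ side, as in Lemma \ref{lemmasingleval}) yields $\limsup_n E_{\nu_n}(h_n) \le E_\nu(h)$; (v) combine to conclude.

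The main obstacle I expect is step (iv): constructing admissible competitors $\gamma_n^\star$ for the $n$-th problem out of the limiting optimal plan in such a way that the cost converges. The difficulty is that $\sigma_{h_n} \to \sigma_h$ only weakly in $L^1$ --- there is no uniform modulus --- so one cannot simply perturb $\gamma^\star$ locally. The clean way around this is to argue instead via the Kantorovich \emph{dual}: by Problem \ref{kant} and \eqref{maxmin}, $E_{\nu_n}(h_n) = \sup\{J_{n}(f,g)\}$ over admissible $(f,g)$ with $f(\mathbf x)+g(\mathbf y)\le c(\mathbf x,\mathbf y)$, and because the admissible potentials can be taken uniformly Lipschitz and uniformly bounded on the fixed compacts (the $c$-transform characterisation \eqref{ctrans} forces this), one extracts via Arzel\`a--Ascoli a uniformly convergent subsequence of optimal $(f_n, g_n)$; then weak-$L^1$ convergence of $\sigma_{h_n}$ against the uniformly bounded $f_n$, together with narrow convergence of $\nu_n$ against the uniformly convergent $g_n$, gives $J_n(f_n,g_n) \to J(f_\infty, g_\infty) \le E_\nu(h)$, yielding one inequality, while lower semicontinuity of the transport cost under narrow convergence of plans gives the other. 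A final remark: since the limit is shown to be independent of the subsequence, the full sequence converges, establishing \eqref{newconverge}.
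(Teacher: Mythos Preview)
Your proposal is correct and is precisely the standard optimal-transport stability argument that the paper invokes without detail: the paper's own proof consists of the single remark that the result ``is standard in optimal transport theory, and similar to the analogous proof in \cite{maroofi}, since weak-$L^1$ convergence in $\mathcal H$ implies narrow convergence of $\sigma_{h_n}$ to $\sigma_h$.'' Your two-sided argument --- compactness of optimal plans on the fixed compact $\overline{\Omega_H}\times\overline\Lambda$ for the lower bound, and Arzel\`a--Ascoli on the uniformly Lipschitz Kantorovich potentials for the upper bound --- is exactly how one unpacks that standard result, so the approaches coincide; the only remark is that your step~(iii) is expositorily tangled (the inequality you first write points the wrong way), but you immediately diagnose and repair this via the dual, so the final argument is sound.
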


\begin{proof}
The proof is standard in optimal transport theory, and similar to the analogous proof in \cite{maroofi}, since (\ref{htop}) implies narrow convergence of $\sigma_{h_n}$ to $\sigma_h$.

\end{proof}


  \subsubsection{Minimisation with respect to the function  $h(x_1,x_2)$}\label{sectionfreeenergymin}

Assume that $P(\bx)$ is a convex function such that, for fixed $(x_1,x_2)$, the function  $\tilde P(x_3)=\frac 1 2 (x_1^2+x_2^2)-P(\bx)$ is  nonzero on a subset $I\subset \R$ of positive Lebesgue measure and satisfies
\be
\frac {d  \tilde P}{d x_3}=-\frac {\partial P}{\partial x_3}\geq 0.
\label{onesign}\ee

Given $\nu \in \mathcal{P}_{ac}(\Lambda)$,  we aim to prove that there exists a unique measurable function $h(x_1,x_2):\Omega_2\to [0,H)$ which is a  minimiser for the second term  in (\ref{MK}). 

Define
\be\label{Pip}
\Pi_P(x_1,x_2,s)=\int_0^s \left[\frac 1 2 (x_1^2+x_2^2)-P(x_1,x_2,x_3)\right] dx_3.
\ee



%


The function $\Pi_P(x_1,x_2,s)$ admits a minimum in $s$ (by continuity), and it follows from our assumption on $\tilde P(x_3)$ that the point $s^*$ where the  minimum is attained is unique.   Indeed, if $s_*\neq 0$ is such a point,  then from the condition $\frac{\partial \Pi_P}{\partial s}(s_*)=0$ we obtain
\be
 P(x_1,x_2,s_*)=\frac 1 2 (x_1^2+x_2^2), 
\label{mincond}\ee
i.e.  $\tilde P(s_*)=0$ if $s_*\neq 0$ is a minimiser. Integrating by parts the integral in (\ref{Pip}), we obtain
$$
\int_0^s \tilde P(x_3)dx_3=s\tilde P(s)-\int_0^s x_3\frac {d \tilde P}{dx_3}.$$
Hence if  $s=s_*$ is a minimiser,  it follows from (\ref{mincond}) that  the first term on the right hand side vanishes. Hence if two such nonzero points exist, say $s_*^1$ and  $s_*^2$, it must be 
$
\int_{s_*^1}^{s_*^2}x_3\frac {d \tilde P}{dx_3}=0.
$
Given our assumption that  the integrand is of one sign, this implies $s_*^1=s_*^2$.

Now suppose $s^*=0$ is a point of  minimum, hence  that   $\Pi_P(x_1,x_2,s)\geq 0$ for all $s\in[0,H)$.  Since the integrand must then be nonnegative, a point $s$ can minimise $\Pi_P(x_1,x_2,s)$ only if (\ref{mincond}) holds at $s^*=s$. 

Hence the minimiser of  $\Pi_P(x_1,x_2,s)$ is unique.

\begin{definition}\label{fhdef}
Define $$h(x_1,x_2)=s_*$$ where $s_*$ is the unique minimiser of $\Pi_P$ given by (\ref{Pip}).
\end{definition}

Following the argument presented in  \cite{sedjro}
for a more difficult situation, one can  show  that the $h$ given by Definition \ref{fhdef} is well defined and that $h(x_1,x_2)\in L^1\cap L^2(\Omega_2)$.

\begin{remark}
As discussed in the previous section, the optimal transport map between $\sigma_h$ and the given density $\nu$ exists and takes the form  $\mathbf T=\nabla P_h$, where $P_h$ is convex. Moreover, $P_h$ is related to the physical pressure $p$ by the relation (see equation (\ref{Pp}) below)
$$
P_h= p + \frac{1}{2}(x_1^2 + x_2^2).
$$
It follows that
$$
\frac {\partial P_h}{\partial x_3}=\frac {\partial p}{\partial x_3} =-\rho\leq 0.
$$
%
Hence $P_h$ satisfies the assumption (\ref{onesign}).

In addition, since the integrand $x_3\rho(\bx)$ is nonnegative, the value $s=0$ cannot be a minimum unless $\rho(\bx)=0$ a.e. (with respect to $x_3$), a case we exclude, see  (\ref{densityest}). 
\end{remark}

\subsubsection{The minimisation result}


In this section, using the results of the minimisation separately in $\mathbf T$ and $h$, we show that there exist a pair of convex functions $(P(\bx),R(\by))$ that maximise (\ref{MK}), and such that  $P(\bx)$ satisfies the additional requirement to be a monotonic function of the variable $x_3$. 
Then we are able to show that there exists a unique minimiser $(\gamma,h)$ of (\ref{newstablerel}), where $\gamma=Id\times \nabla P$ and $h\in {\cal H}_0$, where ${\cal H}_0$ is given in (\ref{H0}).

\begin{proposition}
Let $\nu\in {\cal P}_{ac}(\Lambda)$ be  given.
\begin{itemize}
\item[(i)]
For all $h\in{\cal M}$, there exist  a pair $(P(\bx), R(\by))$, as in Problem (\ref{kant}), that  maximise (\ref{MK}), and such that $P(\bx)$ satisfies condition (\ref{onesign}).
\item[(ii)]
Assume that $(\gamma,h)$ is an arbitrary  pair with $h\in {\cal H}_0$ and  $\gamma\in\Gamma(\sigma_h,\nu)$, as in (\ref{newstablerel}). 

Then
$
I(\gamma,h)\geq J(P,R)$ for all $(P,R)$ as in (i).

The equality holds if and only if $h(x_1,x_2)\in {\cal H}_0$ minimizes $\Pi_P(x_1,x_2,\cdot)$ a.e., and $id\times \nabla P$ is the optimal transport plan of $\sigma_h$ to $\nu$.

\end{itemize}
\label{sed3.4}
\end{proposition}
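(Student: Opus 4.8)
\textbf{Proof proposal for Proposition \ref{sed3.4}.}

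The plan is to treat the two parts in sequence, exploiting the product structure of the cost $-(\bx,\by)$ and the fact that the inner minimisation over $h$ decouples, fibre by fibre over $(x_1,x_2)\in\Omega_2$, into the scalar minimisation of $\Pi_P(x_1,x_2,\cdot)$ studied in Section \ref{sectionfreeenergymin}. For part (i), I would start from the existence of a maximising pair $(f,g)$ for the Kantorovich dual $J_{(\sigma,\nu)}$ obtained from the classical quadratic-cost theory (Lemma \ref{newcconcavechar}), for any fixed $\sigma=\sigma_h$ with $h\in{\cal M}$, and translate it via $P=\tfrac12(x_1^2+x_2^2)-f$, $R=\tfrac12(y_1^2+y_2^2)-g$ into a pair satisfying (\ref{PRcond}); by the general theory $P$ and $R$ are mutually Legendre-conjugate and hence convex. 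The extra point to establish is the sign condition (\ref{onesign}), i.e. $\partial P/\partial x_3\le 0$. This is where the remark at the end of Section \ref{sectionfreeenergymin} does the work: since $\nu$ is supported in $\Lambda\subset\Lambda_2\times[-R_0,0)$, the optimal map $\nabla P$ has its third component $T_3=\partial P/\partial x_3$ valued in $[-R_0,0)$, so $\partial P/\partial x_3\le 0$ $\sigma_h$-a.e.; one then argues, using convexity of $P$ and the freedom to take $\Omega_H$ as the reference domain (with $\Omega_h\subset\Omega_H$), that $P$ may be chosen so that this monotonicity in $x_3$ holds on all of $\Omega_H$ — e.g. by replacing $P$ with its $c$-transform $R^{c}$, which does not change the value of $J$ and inherits the required monotonicity from the constraint $\partial_{y_3}c = -x_3 \le 0$ through the identity in (\ref{ctrans}).

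For part (ii), the inequality $I(\gamma,h)\ge J(P,R)$ for every admissible $(\gamma,h)$ and every $(P,R)$ as in (i) is just weak duality: writing out $I(\gamma,h)$ using (\ref{newstablerel}), inserting the pointwise bound $-(\bx,\by)\ge -P(\bx)-R(\by)$ from (\ref{PRcond}), and integrating against $\gamma$ whose marginals are $\sigma_h$ and $\nu$, gives exactly
\[
I(\gamma,h)\ge \int_\Lambda\Bigl(\tfrac12(y_1^2+y_2^2)-R(\by)\Bigr)d\nu(\by)+\int_{\Omega_H}\Bigl(\tfrac12(x_1^2+x_2^2)-P(\bx)\Bigr)d\sigma_h(\bx)\ge J(P,R),
\]
the last step because $\int_{\Omega_H}(\tfrac12|x'|^2-P)d\sigma_h\ge \inf_{h'\in{\cal M}}\int_{\Omega_H}(\tfrac12|x'|^2-P)d\sigma_{h'}$ (this is compatible with (\ref{maxmin})). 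For the equality case I would split the deficit in the chain into its two sources. First, the pointwise inequality in (\ref{PRcond}) is saturated $\gamma$-a.e. exactly when $\gamma$ is concentrated on $\{(\bx,\by): P(\bx)+R(\by)=(\bx,\by)\}$, i.e. (since $P$, $R$ are conjugate convex functions) when $\by\in\partial P(\bx)$ $\gamma$-a.e., which for absolutely continuous $\sigma_h$ means $\gamma=id\times\nabla P$ — the statement that $id\times\nabla P$ is the optimal plan from $\sigma_h$ to $\nu$. Second, writing $\int_{\Omega_H}(\tfrac12|x'|^2-P)\,d\sigma_h = \int_{\Omega_2}\Pi_P(x_1,x_2,h(x_1,x_2))\,dx_1dx_2$ by Fubini and the definition (\ref{Pip}), the gap to the infimum over ${\cal M}$ vanishes precisely when $h(x_1,x_2)$ minimises $\Pi_P(x_1,x_2,\cdot)$ for a.e. $(x_1,x_2)$; since $P$ satisfies (\ref{onesign}), Section \ref{sectionfreeenergymin} guarantees this minimiser is unique and equals the $h$ of Definition \ref{fhdef}, and one checks $h\in{\cal H}_0$ using (\ref{l1})–(\ref{l2}).

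The main obstacle I anticipate is part (i): producing a maximiser $(P,R)$ that simultaneously solves the quadratic Kantorovich problem and has $P$ monotone (non-increasing) in $x_3$ on the \emph{whole} reference set $\Omega_H$, not merely $\sigma_h$-a.e.\ on $\Omega_h$. The quadratic theory gives the conjugacy and convexity for free, but $P$ is only determined up to its values off $\mathrm{supp}(\sigma_h)$, so the monotonicity must be engineered via a canonical choice ($c$-transform / double Legendre transform) and the geometric input that $\nu$ lives in the half-space $\{y_3<0\}$, which forces the subdifferential slopes in the $x_3$ direction to be non-positive; making this replacement without disturbing optimality, and checking it lands back in $C(\Omega_H)$, is the delicate step. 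Everything else — weak duality, the Fubini reduction to the fibrewise problem, and the two saturation conditions — is routine given the results already established, in particular Theorem \ref{new3.3}, Lemma \ref{newcconcavechar} and the uniqueness analysis of $\Pi_P$.
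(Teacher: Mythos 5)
Your treatment of part (\textit{ii}) is correct and matches the paper's: the chain of inequalities from (\ref{maxmin}) is the standard weak-duality argument, and splitting the equality case into the two saturation conditions (pointwise equality in (\ref{PRcond}) forcing $\gamma=id\times\nabla P$; the inner $\inf_h$ being achieved by $h_0$ forcing $h_0$ to minimise $\Pi_P$ fibrewise) is exactly what the paper does, modulo presentation.

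Part (\textit{i}), however, has a genuine gap. You propose to take a maximiser $(f,g)$ of the classical Kantorovich dual $J_{(\sigma_h,\nu)}$ from (\ref{new26}) \emph{for a fixed} $h\in{\cal M}$, and then translate it into a pair $(P,R)$. But that pair is a maximiser of the wrong functional. The functional $J(P,R)$ in Problem \ref{kant} contains an \emph{inner infimum over} $h\in{\cal M}$, so
$
J(P,R)\le \int_\Lambda(\tfrac12|y'|^2-R)\,d\nu + \int_{\Omega_H}(\tfrac12|x'|^2-P)\,d\sigma_h = J_{(\sigma_h,\nu)}(f,g),
$
with strict inequality unless the chosen $h$ already achieves the inner infimum for the $P$ you produced. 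Thus the pair you construct is only known to maximise the fixed-$h$ functional, not $J(P,R)$. Arguing that it also maximises $J(P,R)$ would require knowing that the optimal $h$ and the optimal Kantorovich potentials for that $h$ coincide at a saddle point — i.e.\ strong duality — which is precisely what Proposition \ref{sed3.4} and Corollary \ref{cormin} are being used to establish; invoking it here is circular. The paper avoids this by working directly on $J(P,R)$: it exhibits an admissible pair $(P_0,R_0)=(c_0,0)$ so the constraint set is non-empty, takes a maximising sequence $(P_n,R_n)$ for $J$ itself, applies the double-convexification trick to force convexity, and extracts a uniform limit $(P,R)$; condition (\ref{onesign}) then comes out of the geometric constraint (\ref{Lambda0}) on $supp(\nu)$, which you correctly identified as the source of the sign. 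Your proposed $c$-transform replacement $P\mapsto R^c$ is a sound device for ensuring $\partial_{x_3}P\le 0$ on all of $\Omega_H$ rather than merely $\sigma_h$-a.e., and is compatible with the paper's direct-method construction; but the preceding step — producing a maximiser of the right functional in the first place — needs the direct method (or an independent min-max/Sion argument), not the fixed-$h$ Brenier theory.
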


\begin{proof}

(i): We sketch the proof, which is based on the analogous proof of part (i) of Proposition 3.4 in \cite{sedjro}, but simpler in our case.

The set defined by the conditions in (\ref{kant}) is non-empty. Indeed, define
$$
c_0=sup_{\Omega_H\times \Lambda}  (\bx,\by),\quad P_0(\bx)=c_0,\quad R_0(\by)=0.
$$
Then $(P_0,R_0)$ satisfy all conditions in Problem (\ref{kant}).

Now consider a maximising sequence $(P_n,R_n)$  for $J(P,R)$. By the double convexification trick, it can be assumed without loss of generality that this sequence is convex, and it can be shown \cite{sedjro} that it converges uniformly to a pair $(P,R)$ of convex functions satisfying the conditions in Problem (\ref{kant}), and in addition such that $P$ satisfies (\ref{onesign}). The latter is a consequence of the fact that the support $\Lambda$ of $\nu$  has the property (\ref{Lambda0}).
Then by standard stability results, $J(P,R)$ is a maximum of the functional.

\smallskip
(ii): Let $(\gamma,h_0)$ be an arbitrary pair with $h_0\in {\cal H}_0$ and $\gamma\in \Gamma(\sigma_{h_0},\nu)$, and let $(P,R)$ be as in part (i).  Then as already observed (see (\ref{maxmin})) we have using that $P(\bx)+R(\by)\geq (\bx,\by)$
$$
J(P,R)=\left[\int_{\Lambda}\left(\frac 1 2 (y_1^2+y_2^2)-R(\by)\right)d\nu(\by)+\inf_{h\in {\cal M}}\int_{\Omega_H}\left(\frac 1 2 (x_1^2+x_2^2)-P(\bx)\right)d\sigma_h(\bx)\right]\leq
$$
$$
\left[\int_{\Lambda}\left(\frac 1 2 (y_1^2+y_2^2)-R(\by)\right)d\nu(\by)+\int_{\Omega_{h_0}}\left(\frac 1 2 (x_1^2+x_2^2)-P(\bx)\right)d\bx\right]\leq
I(\gamma,h_0).
$$
Equality holds if and only if it holds that
$$
\inf_{h\in {\cal M}}\int_{\Omega_H}\left(\frac 1 2 (x_1^2+x_2^2)-P(\bx)\right)d\sigma_h(\bx)=\int_{\Omega_{h_0}}\left(\frac 1 2 (x_1^2+x_2^2)-P(\bx)\right)d\bx
$$
i.e. if $h_0(x_1,x_2)$ is the minimiser of (\ref{Pip}), and if it holds that
$$
\left[\int_{\Lambda}\left(\frac 1 2 (y_1^2+y_2^2)-R(\by)\right)d\nu(\by)+\int_{\Omega_{h_0}}\left(\frac 1 2 (x_1^2+x_2^2)-P(\bx)\right)\bx\right]=
I(\gamma,h_0).
$$
Using the fact that all measures involved are absolutely continuous with respect to Lebesgue measure, the second condition implies that $\by=\nabla P (\bx)$ a.e., and hence the map $\gamma$ is of the form $id\times \nabla P$.

\end{proof}

\begin{corollary}\label{cormin}
There exists a unique minimiser $(\gamma,h)$  of the functional  $I(\gamma,h)$ given by (\ref{newstablerel}), with $h\in {\cal H}_0$ and $\gamma\in\Gamma(\sigma_h,\nu)$. In addition, if $(P_0,R_0)$ as in (i) maximises $J(P,R)$, then $J(P_0,R_0)=I(\gamma,h)$ and $(h,T=\nabla P)$ is the unique minimiser of (\ref{newstablefirst}).
\end{corollary}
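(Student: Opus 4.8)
The plan is to read the statement off Proposition \ref{sed3.4} together with the two separate minimisation results already established, so that this really is a corollary. First I would fix a maximising pair $(P_0,R_0)$ for $J$ as produced in part (i) of Proposition \ref{sed3.4}, so that $P_0$ is convex, satisfies the monotonicity condition (\ref{onesign}), and $R_0=P_0^*$. Using this $P_0$ I would then define $h$ via Definition \ref{fhdef} as the pointwise-in-$(x_1,x_2)$ minimiser of $\Pi_{P_0}(x_1,x_2,\cdot)$ from (\ref{Pip}); by the analysis of Section \ref{sectionfreeenergymin} this minimiser exists and is unique, and, invoking the $L^1\cap L^2$ bound derived there (together with the normalisation $\|h\|_1=1$ forced by the compatibility $\nabla P_0\push\sigma_h=\nu$ with $\nu$ a probability measure, and the moment bound coming from (\ref{densityest})--(\ref{l2})), one obtains $h\in{\cal H}_0$. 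With $\sigma_h=\chi_{\Omega_h}$ now fixed, Theorem \ref{new3.3} provides the unique optimal map $T=\nabla P_0$ with $T\push\sigma_h=\nu$, and I set $\gamma:=id\times\nabla P_0\in\Gamma(\sigma_h,\nu)$.

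Next I would invoke Proposition \ref{sed3.4}(ii): on one hand $I(\gamma',h')\ge J(P_0,R_0)$ for every admissible pair $(\gamma',h')$, and on the other hand the pair $(\gamma,h)$ just constructed meets the equality criterion of that proposition, since $h$ minimises $\Pi_{P_0}$ a.e. and $\gamma=id\times\nabla P_0$ is the optimal plan; hence $I(\gamma,h)=J(P_0,R_0)$. This simultaneously shows that $(\gamma,h)$ minimises $I$ over (\ref{newstablerel}), that there is no duality gap, i.e. $\inf I=J(P_0,R_0)$, and that $J(P_0,R_0)=I(\gamma,h)$, as claimed.

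For uniqueness I would argue that if $(\gamma',h')$ is any minimiser of $I$ then $I(\gamma',h')=\inf I=J(P_0,R_0)$, so the equality case of Proposition \ref{sed3.4}(ii) applies to it as well; hence $h'$ minimises $\Pi_{P_0}(x_1,x_2,\cdot)$ for a.e. $(x_1,x_2)$, and since that minimiser is unique by Section \ref{sectionfreeenergymin} we get $h'=h$ a.e., whence $\sigma_{h'}=\sigma_h$ and then $\gamma'=id\times\nabla P_0=\gamma$. Finally, using the identity $c(\bx,\by)=-(\bx,\by)+\tfrac12(x_1^2+x_2^2)+\tfrac12(y_1^2+y_2^2)$ from (\ref{dcost}) we have $I(\gamma,h)=\int c\,d\gamma$, and since all measures involved are absolutely continuous the infimum over transport plans coincides with the infimum over transport maps; therefore the unique minimiser of $I$ translates into the unique minimiser $(h,T=\nabla P)$ of the original functional (\ref{newstablefirst}), with $P:=P_0$ convex and $R_0=P^*$.

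The step I expect to require the most care is the verification that the $\Pi_{P_0}$-minimiser $h$ actually lies in ${\cal H}_0$ — in particular the mass constraint $\|h\|_1=1$ and the moment bound $\int_{\R^3}x_3\,d\sigma_h\le 2C_0$ — since these are not purely pointwise in $(x_1,x_2)$; this is where one leans on the compatibility $\nabla P_0\push\sigma_h=\nu$, on the a priori estimates (\ref{densityest})--(\ref{l2}), and on the support property (\ref{Lambda0}) already used in Proposition \ref{sed3.4}(i) to force (\ref{onesign}), mirroring the argument of \cite{sedjro}. A secondary, mostly cosmetic, point is that the Kantorovich maximiser $(P_0,R_0)$ need not be literally unique (only up to the usual normalisation), but this causes no difficulty: uniqueness of $(\gamma,h)$ is forced by uniqueness of the $\Pi_{P_0}$-minimiser and of the optimal transport map, so any admissible choice of maximiser yields the same minimising pair.
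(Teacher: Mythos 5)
Your high-level architecture — build a primal pair $(\gamma,h)$ out of a dual maximiser $(P_0,R_0)$, check it meets the equality criterion of Proposition~\ref{sed3.4}(\textit{ii}), and read off existence, uniqueness and the duality identity — is the right shape for this corollary. But there is a genuine gap at the step where you fit the pieces together.

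You write that, with $\sigma_h$ fixed as $\chi_{\Omega_h}$ for the $\Pi_{P_0}$-minimiser $h$, ``Theorem~\ref{new3.3} provides the unique optimal map $T=\nabla P_0$ with $T\push\sigma_h=\nu$.'' That is not what Theorem~\ref{new3.3} says. Given $\sigma_h$ and $\nu$, Theorem~\ref{new3.3} produces \emph{some} convex $P$ (the Kantorovich potential for the pair $(\sigma_h,\nu)$) with $\nabla P\push\sigma_h=\nu$; there is no reason a priori for this $P$ to coincide with $P_0$, the maximiser of the joint functional $J$ in Problem~\ref{kant}, because $J$ already contains the infimum over $h$ and $(P_0,R_0)$ has been optimised for that coupled problem, not for the single optimal-transport problem between $\sigma_h$ and $\nu$. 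Concretely, you need $\nabla P_0\push\sigma_h=\nu$ for the $h$ you built from $\Pi_{P_0}$, and you need $id\times\nabla P_0$ to be the optimal plan of $\sigma_h$ to $\nu$; both are exactly the complementary-slackness (equality) conditions in Proposition~\ref{sed3.4}(\textit{ii}), so invoking them to establish the equality is circular. The same circularity infects your justification that $\|h\|_1=1$ — you derive it from ``the compatibility $\nabla P_0\push\sigma_h=\nu$,'' but that compatibility is precisely what is in question; the pointwise-in-$(x_1,x_2)$ minimisation of $\Pi_{P_0}$ does not by itself impose any normalisation on $\int h$.

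What is actually being asserted in Corollary~\ref{cormin} is strong duality, $\inf I=\sup J$, together with the unique-characterisation consequences, and strong duality is the content you have not supplied. One route that avoids your circularity is to use the \emph{already-established} existence of a minimiser of (\ref{newstablefirst}) (the unnamed Proposition before the Kantorovich section, via compactness of $\mathcal{H}$): take that minimising $(\sigma_{h^*},T^*)$, let $(P^*,R^*)$ be the classical Kantorovich potentials for the fixed pair $(\sigma_{h^*},\nu)$, note $(P^*,R^*)$ is admissible for Problem~\ref{kant} and $J_{h^*}(P^*,R^*)=I(\gamma^*,h^*)$, and then prove that the inner infimum over $h$ in $J(P^*,R^*)$ is attained at $h^*$ — i.e.\ that the energy-minimising $h^*$ also minimises $\Pi_{P^*}(x_1,x_2,\cdot)$ pointwise. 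That last step is the nontrivial one (it is essentially the first-variation computation in the $h$ direction, cf.\ the argument in Theorem~\ref{new4.2} and \cite{sedjro}), and it is the step you have swept under the phrase ``mirroring the argument of \cite{sedjro}'' without carrying it out. Once $\sup J\ge J(P^*,R^*)=\inf I$ is established, weak duality gives equality, and your uniqueness argument via the equality case of Proposition~\ref{sed3.4}(\textit{ii}) is then correct and can be kept as written.
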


\subsubsection{Properties of the energy minimiser}

In this section we use the notation $E_\nu(h)\equiv E_\nu(\sigma_h)$.

\begin{theorem}\label{new4.2}
Assume that $\Lambda \subset \mathbb{R}^3$ is a bounded open set and let $\nu \in \mathcal{P}_{ac}(\Lambda )$.  Let $\Freem $ be defined by (\ref{calH}).  Let $\freemin$ correspond to the unique minimiser of $E_\nu (\cdot )$ in $\Freem $.  Denote by $\bar {\mathbf T}$ the optimal map in the transport of $\sigma_{\freemin} $ to $\nu $ with cost function $c(\mathbf x, \mathbf y)$ defined as in (\ref{newcost}).  Then $\bar {\mathbf T} = \nabla P$, where
\begin{equation}\label{Pp}
P = p + \frac{1}{2}(x_1^2 + x_2^2).
\end{equation}
Moreover,
\begin{equation}\label{psob1}
   p(t, \cdot ) \in W^{1, \infty }(\Omega _2 \times [0, \freemin ]), \quad \bar h(t,\cdot) \in W^{1,\infty }(\Omega _2),\qquad \textrm{ for all } t \in [0, \tau ).
  \end{equation}
\end{theorem}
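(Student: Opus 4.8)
\textbf{Proof proposal for Theorem \ref{new4.2}.}

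The plan is to combine the two separate minimisation results---minimisation in the transport map $\mathbf T$ (Section \ref{freeoptsection}, Theorem \ref{new3.3}) and minimisation in the height function $h$ (Section \ref{sectionfreeenergymin}, Definition \ref{fhdef})---as assembled in Corollary \ref{cormin}. First I would invoke Corollary \ref{cormin}: the unique minimiser $(\gamma, \freemin)$ of the relaxed functional $I(\gamma,h)$ has $\gamma = Id \times \nabla P$, where $(P,R)$ is the maximising pair of the Kantorovich problem \ref{kant}, $R = P^*$, and $P$ is convex and satisfies the monotonicity condition (\ref{onesign}). By Theorem \ref{new3.3}\textit{(i)} this same $\nabla P$ is the optimal transport map $\bar{\mathbf T}$ of $\sigma_{\freemin}$ to $\nu$ with cost $c$, since the quadratic-minus-inner-product form (\ref{dcost}) makes the $c$-optimal map coincide with the gradient of the convex potential $P$. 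This gives $\bar{\mathbf T} = \nabla P$. To identify $P$ with $p + \tfrac12(x_1^2+x_2^2)$, I would read off the first-order optimality/Euler--Lagrange structure: the stationarity of the energy (\ref{en1}) under volume-preserving, absolute-momentum-conserving variations forces $\mathbf T(\bx) = \nabla P(\bx)$ with $P$ the geopotential, and comparing with the physical relation $(y_1-x_1, y_2-x_2, y_3) = \nabla p$ coming from (\ref{sginc})/(\ref{newchange}) yields $\nabla P = \nabla p + (x_1,x_2,0)$, hence (\ref{Pp}) up to a constant; this is exactly the content foreshadowed in the Remark after Definition \ref{fhdef} and in (\ref{newdualinc4.5}). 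The sign relation $\partial_{x_3} P = \partial_{x_3} p = -\rho \le 0$ (consistent with (\ref{densityest})) is precisely condition (\ref{onesign}), so everything is compatible.

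The substantive part is the regularity claim (\ref{psob1}): $p(t,\cdot) \in W^{1,\infty}(\Omega_2 \times [0,\freemin])$ and $\freemin(t,\cdot) \in W^{1,\infty}(\Omega_2)$. For the $W^{1,\infty}$ bound on $P$ (equivalently on $p$), I would argue that $P$, being the maximiser in (\ref{MK}) with $R = P^*$, can be taken to be the Legendre transform of a function on the compact set $\Lambda$; since $\Lambda$ is bounded with $\overline{\Lambda} \subset \Lambda_2 \times [-\tfrac1\delta, -\delta]$ (from (\ref{Lambda0}) and (\ref{densityest})), the supremum defining $P^{*}$ is over a bounded set, so $P = (P^*)^* $ is Lipschitz with $\|\nabla P\|_{L^\infty} \le \operatorname{diam}(\Lambda)$, giving the stated constant $C = C(\freemin, \Lambda, c)$. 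This is the analogue of parts \textit{(ii)}--\textit{(iii)} of Theorem \ref{bbdualth} and uses only boundedness of the dual domain. For $\freemin \in W^{1,\infty}(\Omega_2)$, the idea is that $\freemin(x_1,x_2)$ is characterised pointwise by (\ref{mincond}), namely $P(x_1,x_2,\freemin(x_1,x_2)) = \tfrac12(x_1^2+x_2^2)$, i.e. $p(x_1,x_2,\freemin) = 0$ (the prescribed boundary pressure $p_h = 0$). I would then apply the implicit function theorem / a quantitative Lipschitz estimate to this level-set equation: the vertical derivative $\partial_{x_3} p = -\rho \le -\delta < 0$ is bounded away from zero by (\ref{densityest}), and $p$ is Lipschitz in all variables by the previous step, so $\freemin$ inherits a Lipschitz bound in $(x_1,x_2)$ with constant controlled by $\|\nabla_{(x_1,x_2)} p\|_{L^\infty}/\delta$.

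The main obstacle I anticipate is making the level-set argument for $\freemin \in W^{1,\infty}$ fully rigorous, because $p$ is only Lipschitz (not $C^1$), so the classical implicit function theorem does not apply verbatim; one must instead use a monotonicity/comparison argument directly on $\Pi_P(x_1,x_2,\cdot)$ from (\ref{Pip}), exploiting that its minimiser $s_* = \freemin$ depends Lipschitz-continuously on the parameters $(x_1,x_2)$ precisely because the map $s \mapsto \partial_s \Pi_P = \tfrac12(x_1^2+x_2^2) - P(x_1,x_2,s)$ is strictly decreasing with slope bounded below by $\delta$ (from (\ref{onesign}) sharpened via (\ref{densityest})) while being jointly Lipschitz in $(x_1,x_2,s)$. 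A second, more delicate point is ensuring that the optimal $P$ selected in Proposition \ref{sed3.4}\textit{(i)} (obtained by the double-convexification/uniform-convergence argument) genuinely possesses the $W^{1,\infty}$ regularity and the \emph{strict} sign in (\ref{onesign}) needed for uniqueness of $\freemin$; here I would lean on the bounds on $\operatorname{supp}(\nu) = \Lambda$ exactly as in \cite{sedjro}, and on the density lower bound (\ref{densityest}) carried over to $T_3 = -\rho$, to upgrade (\ref{onesign}) to $\partial_{x_3}P \le -\delta$. Once these Lipschitz estimates are in place, (\ref{psob1}) follows, and translating back via (\ref{Pp}) gives the pressure regularity on $\Omega_2 \times [0,\freemin]$.
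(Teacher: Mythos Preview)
Your approach is correct but follows a genuinely different route from the paper's proof. The paper argues by a first-variation computation: it takes a divergence-free vector field $\xi$ compactly supported in $\Omega_2\times(0,\infty)$, flows $\sigma_{\bar h}$ along the associated measure-preserving diffeomorphisms, and uses minimality of $\bar h$ to obtain $\int_{\Omega_{\bar h}}\nabla_x c(\bx,\bar{\mathbf T}(\bx))\cdot\xi(\bx)\,d\bx=0$ for every such $\xi$. Since $\bar h$ is not smooth, the boundary term is handled by De Giorgi's generalised divergence theorem for finite perimeter sets; the resulting identity on the reduced boundary, combined with the free-surface condition $p=0$ on $\{x_3=\bar h\}$, gives $\nabla_x c(\bx,\bar{\mathbf T}(\bx))=-\nabla p$ weakly and hence (\ref{Pp}) together with $p\in W^{1,\infty}$. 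For $\bar h\in W^{1,\infty}$ the paper then differentiates the constant function $\tilde p(x_1,x_2)=p(x_1,x_2,\bar h(x_1,x_2))$ and uses $\partial_{x_3}p=-\rho$ to obtain $\nabla_2 h=\rho^{-1}\nabla_2 p$, which is bounded by the bounds on $p$ and $\rho$.

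By contrast you bypass the variational computation entirely and read the structure directly off the Kantorovich duality already assembled in Corollary~\ref{cormin}: the optimal $P$ is a Legendre transform over the compact set $\Lambda$, which immediately gives the Lipschitz bound on $P$ (hence on $p$), and the pointwise characterisation (\ref{mincond}) of $\bar h$ as the zero level set of $p$ yields $\bar h\in W^{1,\infty}$ via the quantitative implicit-function/monotonicity argument you outline. This last step is in substance the same computation as the paper's $\nabla_2 h=\rho^{-1}\nabla_2 p$, but reached without going through De Giorgi's theorem. Your route is shorter and avoids the finite-perimeter machinery; the paper's route, on the other hand, makes the Euler--Lagrange structure and the role of the free-boundary condition more transparently visible. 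The point you flag as delicate---upgrading (\ref{onesign}) to the strict bound $\partial_{x_3}P\le-\delta$---is indeed what is needed, and your justification (that $\nabla P$ maps into $\operatorname{supp}\nu\subset\Lambda_2\times[-1/\delta,-\delta]$, so $\partial_{x_3}P\in[-1/\delta,-\delta]$ a.e.\ on $\Omega_{\bar h}$) is exactly right and suffices for the one-sided Lipschitz estimate on the level set.
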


\begin{proof}
Let $\xi $ be a smooth, compactly supported  vector field in $\Omega _2 \times (0, \infty)$ such that $\nabla\cdot \xi=0$.
Consider the one parameter family of measure-preserving diffeomorphisms $\{\mathbf R(s,\bx)\}$ given by 
\[ \frac{\partial }{\partial s}\mathbf R(s, \mathbf x) = \xi (\mathbf R(s, \mathbf x)), \qquad \mathbf R(0, \mathbf x) = \mathbf x.\]
For $s > 0$, let $\optfreem _s \in \optFreem $ be given by $\optfreem _s : = \mathbf R \textrm{\#} \sigma_{\bar h} $. 
Since the flow corresponding to $\xi$ is smooth and incompressible,   we can assume that for $s$ sufficiently small it transports the initial minimising profile $\bar h$ to some perturbed profile $\tilde h_s$, which however may be multivalued.  Using Lemma \ref{lemmasingleval}, we can find a corresponding single-valued $h_s\in 
L^1\cap L^2(\Omega_2)$  with $\|h_s\|_1=1$,  whose corresponding energy is lower than the energy associated with $\tilde h_s$.  Hence we can assume in the argument  that $\sigma_s=\sigma_{h_s}$.  


 

Since $\freemin $ corresponds to the minimiser for $E_\nu$, we have
\begin{align*}
0 \leqslant E_\nu(\freem _s) - E_\nu(\freemin ) & = \inf _{\mathbf T \textrm{\#} \optfreem _s = \nu}\int_{\mathbb{R}^3} \! c(\mathbf x, {\mathbf T}(\mathbf x))\optfreem _s(\mathbf x) \, d\mathbf x - \inf _{\mathbf T \textrm{\#} \optfreemin = \nu }\int_{\mathbb{R}^3} \! c(\mathbf x, {\mathbf T}(\mathbf x))\optfreemin (\mathbf x) \, d\mathbf x\\
& = \int_{\mathbb{R}^3} \! c(\mathbf x, \mathbf T_s(\mathbf x))\optfreem _s (\mathbf x)\, d\mathbf x - \int_{\mathbb{R}^3} \! c(\mathbf x, \bar {\mathbf T}(\mathbf x))\optfreemin (\mathbf x)\, d\mathbf x,
\end{align*}
where the existence and a.e. uniqueness of optimal maps $ \mathbf T_s$ and $\bar {\mathbf T}$ follow from  \ref{cormin}.  Since $(\bar {\mathbf T} \circ \mathbf R^{-1})\textrm{\#} \optfreem _{s} = \nu $, and $\mathbf R(0, \mathbf x) = \mathbf x$, we have
\begin{align*}
0 \leqslant \lim _{s \to 0}\frac{E_\nu(\freem _s) - E_\nu(\freemin )}{s} &\leqslant \lim _{s \to 0}\frac{\int_{\mathbb{R}^3} \! c(\mathbf x, \bar {\mathbf T}(\mathbf R^{-1}(s,\mathbf x))) \optfreem _s(\mathbf x)\, d\mathbf x - \int_{\mathbb{R}^3} \! c(\mathbf x, \bar {\mathbf T}(\mathbf x))\optfreemin (\mathbf x) \, d\mathbf x}{s} \\
& = \lim _{s \to 0}\frac{\int_{\mathbb{R}^3} \! [c(\mathbf R(s,\mathbf x), \bar {\mathbf T}(\mathbf x)) -  c(\mathbf x, \bar {\mathbf T}(\mathbf x))] \optfreemin (\mathbf x)\, d\mathbf x}{s}\\
& = \lim _{s \to 0}\frac{\int_{\Omega _2}\int_0^{\freemin } \! c(\mathbf R(s,\mathbf x), \bar {\mathbf T}(\mathbf x)) -  c(\mathbf x, \bar {\mathbf T}(\mathbf x)) \, d\mathbf x}{s}\\
& = \int_{\Omega _2}\int_0^{\freemin } \! \nabla c(\mathbf x, \bar {\mathbf T}(\mathbf x)) \cdot \xi (\mathbf x) \, d\mathbf x .
\end{align*}
Using the assumption that  $\xi $ is an arbitrary vector field in the class chosen, this inequality also holds for $-\xi $. Therefore (using that 
$\xi$ is divergence free) we can deduce that
\[ \int_{\Omega _2}\int_0^{\freemin } \! \nabla \cdot (c(\mathbf x, \bar {\mathbf T}(\mathbf x)) \xi (\mathbf x)) \, d\mathbf x = 0.  \]
We now want to conclude that $\nabla c(\bx, \bar {\mathbf T}(\mathbf x))=-\nabla p(\bx)$, in the weak sense. 

Since $\bar h$ is not necessarily smooth, we cannot use the Gauss-Green theorem. However we note that $\Omega_h$ is a finite perimeter set, i.e. $\sigma_{\bar h}$ is of bounded variation. 
Therefore we can use  the generalisation of the divergence theorem due to De Giorgi (see for example \cite{ambperimetri}) to conclude 
\begin{equation} 0=\int_{\Omega _h} \! \nabla \cdot (c(\mathbf x, \bar {\mathbf T}(\mathbf x)) \xi (\mathbf x)) \optfreemin (\mathbf x) \,  d\mathbf x = \int_{\partial \Omega ^*_{\bar h}}((\xi \cdot \mathbf n) c(\mathbf x, \bar {\mathbf T}(\mathbf x)) |_{x_3 = \freemin }) \, dx_1dx_2.  \label{deg}\end{equation}
In this formula, $\partial \Omega ^*_{\bar h}$ denotes the reduced boundary (in the sense of De Giorgi) of $\Omega_h$. The only nonzero boundary terms  are the ones arising from the portion of reduced boundary which is a subset of $\{x_3=\bar h(x_1,x_2)\}$.  Given the boundary condition $p = 0$ when $x_3 = \freemin $, we conclude that the identity (\ref{deg})  will hold for arbitrary $\xi $ if  
the identity
$ \nabla c(\mathbf x, \bar {\mathbf T}(\mathbf x)) = -\nabla p(\mathbf x)$ holds in the weak sense. Using the fact that $\bar{\mathbf T}=\nabla P$ with $P$ convex (see Theorem \ref{new3.3}), the properties of $c$-transforms (see (\ref{ctrans}), and the argument for the validity of identity (\ref{mincond}), this implies that
  \begin{equation}\label{psob}
  p \in W^{1, \infty }(\Omega ),\quad \forall t \in [0, \tau ),
  \end{equation}
and that 
 $$P = p + \frac{1}{2}(x_1^2 + x_2^2) \qquad a.e. \;in \; \Omega _2 \times [0, \freemin ].
 $$

We now note that  since the pressure $p(x_1,x_2,x_3)$ satisfies, for each fixed time $t<\tau$,
\be
\left\{\begin{array}{ll}
\frac{\partial p}{\partial x_3}=-\rho&
\\
p=p_h& x_3=h(x_1,x_2)
\end{array}\right. \label{condt1}\ee
it is possible to establish a relation between $p$ and $h$. Indeed, let $\tilde p(x_1,x_2)=p(x_1,x_2,h(x_1,x_2))$. By the given boundary conditions, the function $\tilde p$ is constant, hence
$$
\nabla_2 \tilde p=\nabla_2 p+\frac {\partial p}{\partial x_3}\nabla_2 h=0,
$$
where $\nabla_2 =\left(\frac{\partial}{\partial x_1}, \frac{\partial}{\partial x_2}\right)$ denotes the two-dimensional gradient. Using the condition (\ref{condt1}), we find
$$
\nabla_2 p=\rho \nabla_2 h, \quad (x_1,x_2)\in\Omega_2.
$$


Hence  control of $h$  follows from the above estimates on $p$ and $\rho$, since $D_t\rho=0$ implies that  $\rho$ is bounded by its initial values. We can therefore assert that the unique solution of the minimisation problem satisfies additionally the property $h\in W^{1,\infty}(\Omega_2)$. 

\end{proof}
\begin{lemma}\label{new4.3}
Assume that $\Lambda \subset \mathbb{R}^3$ is a bounded open set.  Let $\nu _n ,\, \nu \in \mathcal{P}_{ac}(\Lambda )$ with $\nu _n$ converging narrowly to $\nu $ as $n \to \infty $.  Let $\Freem $ be defined by (\ref{calH}) and let $E_\nu (\cdot )$ be defined by (\ref{newstablefirst}).  For each $n$, let $\freemin _n\in {\cal H}_0$ correspond to the minimiser $\sigma_{\bar h_n}\in{\cal H}$ of $E_{\nu _n}(\cdot )$ and let $\freemin \in {\cal H}_0$ correspond to the minimiser $\sigma_{\bar h}\in {\cal H}$ of $E_{\nu }(\cdot )$.  Then, as $n \to \infty $, $\freemin _n$ converge  to $\freemin $  in the weak $L^1$ topology.
\end{lemma}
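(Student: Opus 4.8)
The plan is to combine weak-$L^1$ compactness of the minimising densities with the stability of the optimal transport cost (Lemma~\ref{optnew3.4}) and the uniqueness of the minimiser (Corollary~\ref{cormin}), and then to promote subsequential convergence to convergence of the full sequence by the Urysohn subsequence principle. Throughout I would write $\mathcal{E}(\sigma,\mu):=\inf_{\mathbf T\#\sigma=\mu}\int_{\mathbb R^3}c(\mathbf x,\mathbf T(\mathbf x))\,\sigma(\mathbf x)\,d\mathbf x=E_\mu(\sigma)$. For compactness: by (\ref{l2}) one has $\|\bar h_n\|_{L^2(\Omega_2)}^2\le C_0$ uniformly in $n$, so $\{\bar h_n\}$ is relatively weakly sequentially compact in $L^1(\Omega_2)$; equivalently, since $0\le\sigma_{\bar h_n}\le 1$ and all these densities are supported in the fixed bounded set $\Omega_2\times[0,H]$ (see (\ref{omegaH})), $\{\sigma_{\bar h_n}\}$ is relatively compact in the weak-$L^1$ topology of $L^1(\mathbb R^3)$. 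It then suffices to show that along every weakly convergent subsequence the limit is $\bar h$. So I would fix such a subsequence, still indexed by $n$, with $\sigma_{\bar h_n}\rightharpoonup\sigma_*$ weakly in $L^1(\mathbb R^3)$, $0\le\sigma_*\le 1$, and set $h_*:=\int_0^\infty\sigma_*(x_1,x_2,x_3)\,dx_3$. Testing $\sigma_{\bar h_n}\rightharpoonup\sigma_*$ against functions $\phi(x_1,x_2)\in L^\infty(\Omega_2)$ gives $\bar h_n\rightharpoonup h_*$ weakly in $L^1(\Omega_2)$, while testing against $\phi\equiv 1$ and $\phi(\mathbf x)=x_3$ (both bounded on the common support) gives $\|h_*\|_1=1$ and $\int_{\mathbb R^3}x_3\,d\sigma_{h_*}\le 2C_0$, so $h_*\in{\cal H}_0$ and $\sigma_{h_*}\in\mathcal H$.

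Next, identification of the limit. By classical Kantorovich duality for the cost (\ref{newcost}) on the bounded domains involved, $\mathcal E(\sigma,\mu)=\sup\{\int f\,\sigma+\int g\,d\mu\}$ over bounded continuous pairs with $f(\mathbf x)+g(\mathbf y)\le c(\mathbf x,\mathbf y)$. For a fixed admissible $(f,g)$, weak-$L^1$ convergence of $\sigma_{\bar h_n}$ (with $f$ bounded on the support) together with narrow convergence of $\nu_n$ (with $g$ bounded continuous) gives $\int f\,\sigma_{\bar h_n}+\int g\,d\nu_n\to\int f\,\sigma_*+\int g\,d\nu$; taking the supremum over $(f,g)$ yields $\mathcal E(\sigma_*,\nu)\le\liminf_n\mathcal E(\sigma_{\bar h_n},\nu_n)=\liminf_n E_{\nu_n}(\bar h_n)$. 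For the matching upper bound I would use that $\sigma_{\bar h}\in\mathcal H$ is an admissible competitor for the minimisation defining $E_{\nu_n}$, so $E_{\nu_n}(\bar h_n)\le\mathcal E(\sigma_{\bar h},\nu_n)$, and then invoke Lemma~\ref{optnew3.4} for the constant sequence $\sigma_{\bar h}$ and $\nu_n\to\nu$ to get $\mathcal E(\sigma_{\bar h},\nu_n)\to\mathcal E(\sigma_{\bar h},\nu)=E_\nu(\bar h)$. Combining,
\[
\mathcal E(\sigma_*,\nu)\;\le\;\liminf_{n\to\infty}E_{\nu_n}(\bar h_n)\;\le\;\limsup_{n\to\infty}E_{\nu_n}(\bar h_n)\;\le\;E_\nu(\bar h)=\min_{\sigma_h\in\mathcal H}E_\nu(\sigma_h).
\]

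Finally, exactly as in the proof of Lemma~\ref{lemmasingleval} (and of the existence proposition of Section~\ref{Hsect}), I would transport $\sigma_{h_*}$ onto $\sigma_*$ by a monotone map $\mathbf R=(x_1,x_2,x_3+\varphi(\mathbf x))$ with $\varphi\ge 0$ and use that the third component of the optimal map of $\sigma_*$ onto $\nu$ for the cost (\ref{newcost}) is negative, to obtain $E_\nu(\sigma_{h_*})\le\mathcal E(\sigma_*,\nu)\le\min_{\sigma_h\in\mathcal H}E_\nu(\sigma_h)$. Hence $\sigma_{h_*}\in\mathcal H$ realises the minimum of $E_\nu$, and the uniqueness of the minimiser (Corollary~\ref{cormin}, Theorem~\ref{new4.2}) forces $\sigma_{h_*}=\sigma_{\bar h}$, i.e. $h_*=\bar h$. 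Since this holds along every weakly convergent subsequence, the whole sequence converges: $\bar h_n\rightharpoonup\bar h$ weakly in $L^1(\Omega_2)$.

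The main obstacle is that $\mathcal H$ is not closed under weak-$L^1$ convergence, so the subsequential limit $\sigma_*$ need not itself be of the form $\sigma_h$; this is exactly what the vertical rearrangement argument (monotone $\mathbf R$ together with negativity of the third component of the transport map) is designed to handle — it shows that collapsing $\sigma_*$ to $\sigma_{h_*}$ cannot raise the energy, so the genuine minimiser is recovered and uniqueness can be invoked. A secondary technical point is the joint lower semicontinuity of the transport cost under the two distinct modes of convergence (weak-$L^1$ of the source density, narrow of the target measure), which I would handle through the Kantorovich dual rather than through transport plans.
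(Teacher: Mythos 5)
Your proof is correct, and it follows the same broad outline as the paper's (compactness, identification of the subsequential limit, uniqueness, subsequence principle), but it repairs a gap that the paper's own argument leaves open. The paper asserts directly that, by ``compactness of $\mathcal H$'', a subsequence of $\sigma_{\bar h_n}$ converges in $\mathcal H$ to some $\sigma_{\tilde h}$ with $\tilde h \in \mathcal H_0$, and then invokes Lemma~\ref{optnew3.4} for the pair of convergent sequences $(\sigma_{\bar h_n},\nu_n)$. But the paper itself notes a few pages earlier that $\mathcal H$ is only \emph{relatively} weakly-$L^1$ compact and is \emph{not} weakly-$L^1$ closed, so nothing guarantees that the subsequential weak limit $\sigma_*$ has the form $\chi_{\Omega_{\tilde h}}$, and Lemma~\ref{optnew3.4} as stated does not apply until that is known. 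You handle exactly this point: you let $\sigma_*$ be an arbitrary weak-$L^1$ limit with $0\le\sigma_*\le1$, obtain lower semicontinuity of the transport cost under the mixed weak-$L^1$/narrow convergence via Kantorovich duality (instead of invoking Lemma~\ref{optnew3.4} for both directions), get the matching upper bound from the competitor $\sigma_{\bar h}$ and Lemma~\ref{optnew3.4} applied only with the \emph{constant} source sequence, and then collapse $\sigma_*$ onto $\sigma_{h_*}\in\mathcal H$ by the monotone vertical rearrangement argument from Lemma~\ref{lemmasingleval}, so that uniqueness of the minimiser (Corollary~\ref{cormin}) identifies $h_*=\bar h$. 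The Urysohn step then upgrades this to convergence of the full sequence, which the paper leaves implicit. The one small thing you elide is that testing the weak-$L^1$ convergence against $\phi(\mathbf x)=x_3$ gives $\int x_3\,\sigma_*\le 2C_0$, and the stated bound $\int x_3\,d\sigma_{h_*}\le 2C_0$ then requires the (easy, bathtub-type) observation that $\int_0^\infty x_3\,\sigma_{h_*}(x_1,x_2,x_3)\,dx_3=\tfrac12 h_*^2\le\int_0^\infty x_3\,\sigma_*(x_1,x_2,x_3)\,dx_3$ pointwise in $(x_1,x_2)$; you should say this explicitly since $\sigma_*$ and $\sigma_{h_*}$ are different densities.
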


\begin{proof}


For each $n$, $\sigma_{\bar h _n}$ minimises $E_{\nu _n}(\cdot )$ over $\Freem $ so that
\begin{equation}\label{contradiction}
E_{\nu _n}(\freemin _n) \leqslant E_{\nu _n}(\freem )\quad \forall h\in {\cal H}_0.
\end{equation}

By the compactness of $\Freem $, there exists $\tilde{\freem } \in {\cal H}_0 $ such that, up to a subsequence that we label $\freemin _n$ again, the $\sigma_{\bar h _n}$ converge in ${\cal H}$, and hence  narrowly, to $\sigma_{\tilde h}$. 
  
  We now show that $\tilde{\freem}$ minimises $E_\nu(\cdot )$.  From Lemma \ref{optnew3.4}, we have that $E_{\nu _n}(\freemin _n) \to E_\nu (\tilde{\freem})$ as $n \to \infty $, and that $E_{\nu _n} (\freemin ) \to E_\nu (\freemin )$ as $n \to \infty $.  

Since $\sigma_{\bar h} $ minimises (\ref{newenergyminfirst}) over $\Freem $, we know that $E_\nu (\freemin ) \leqslant E_\nu (\tilde{\freem})$.  Assume that $E_\nu (\freemin ) < E_\nu (\tilde{\freem})$.  Then, for $n$ large enough, $E_{\nu _n}(\freemin ) < E_{\nu _n}(\freemin _n)$.  This contradicts (\ref{contradiction}).  Thus, we obtain $E_\nu (\tilde{\freem}) = E_\nu (\freemin ) \leqslant E_\nu (\freem )$ for all $\freem \in \Freem $.  
Hence, we have that $\tilde{\freem}$ is a minimiser of $E_\nu (\cdot )$.  Since the minimiser of $E_\nu (\cdot )$ is unique, we conclude that $\tilde{\freem} = \freemin $.

%

\end{proof}

\subsection{Dual space existence result}\label{newmain}

In this section we  prove our main result, namely Theorem \ref{new5.5}. 

We will make use of the theory of Hamiltonian ODE of \cite{gangbo},  summarised in the Appendix. 
Here we give  the main definition and list the properties of the Hamiltonian that allow us to invoke that theory of Hamiltonian flows in the present context.

The concept of Hamiltonian ODEs is rigorously defined in appendix. In brief, and in the present context,  a Hamiltonian flow is the solution $\nu(t)\in{\cal P}_{ac}^2$ of the following problem:
given an initial probability density $\nu_0\in\mathcal{P}_{ac}^2$ and a Hamiltonian $H: \mathcal{P}_{ac}^2\to \R$,  find $\nu(t)\in{\cal P}_{ac}^2$  that coincides with $\nu_0$ at time $t=0$ and satisfying
$$
\partial _t\nu(t)+\nabla \cdot (J\partial_0 H(\nu(t))\nu(t))=0.$$
Here $\partial_0 H$ denotes in general the element of minimal $L^2$ norm in the superdifferential of $H$ (see Appendix).
The space $\mathcal{P}_{ac}^2$ is consider as a metric space with the metric given by the Wasserstein distance $W_2$ \cite{ambgangbo}. 

The strategy of the proof of Theorem \ref{new5.5} is to show that when one consider as Hamiltonian the dual energy, it is posible to find a corresponding Hamiltonian flow, and moreover that the velocity $J\partial_0 H(\nu(t))$ coincides with the dual velocity $\mathbf w$.

The  three conditions (H1), (H2), (H3)  on the Hamiltonian $H$  that guarantee the existence of an Hamiltonian flow are the following: \\*[3mm]
(H1) \textit{ There exist constants } $C_0 \in (0, \infty )$, $R_0 \in (0, \infty ]$ \textit{such that, for all }$\nu \in \mathcal{P}_{ac}^2(\mathbb{R}^3)$ \textit{with} $W_2(\nu , \nu _0) < R_0$, \textit{ we have that the superdifferential is not empty }  \textit{ and } $\mathbf v = \partial _0 H(\nu )$ \textit{ satisfies } $|\mathbf v(\mathbf y)| \leqslant C_0(1 + |\mathbf y|)$ \textit{ for } $\nu -$a.e. $\mathbf y \in \mathbb{R}^3$.\\*[3mm]
(H2) \textit{ If } $\nu ,\, \nu _n \in \mathcal{P}_{ac}^2(\mathbb{R}^3)$, $\sup _n W_2(\nu _n, \nu_0)<R_0$ \textit{ and } $\nu _n \rightarrow \nu $ \textit{ narrowly, then there exists a subsequence } $n(k)$ \textit{ and functions } $\mathbf v_k$, $\mathbf v$ \textit{ such that } $\mathbf v_k = \partial _0 H(\nu _{n(k)})$ $\nu _{n(k)}-$a.e., $\mathbf v = \partial _0 H(\nu )$ $\nu -$a.e. \textit{ and } $\mathbf v_k \rightarrow \mathbf v$ a.e. \textit{ in } $\mathbb{R}^3$ \textit{ as } $k \rightarrow \infty $.\\*[3mm]

Condition (H1) essentially requires that the velocity's growth is controlled and bounded on every bounded domain,
while condition (H2) is a continuity assumption.

To ensure the constancy of $H$ along the solutions of the Hamiltonian system we consider also:\\*[3mm]
(H3) $H:\mathcal{P}_{ac}^2(\mathbb{R}^3) \rightarrow (-\infty , \infty ]$ \textit{ is proper, upper semi-continuous and }$\lambda -$\textit{concave for some }$\lambda \in \mathbb{R}$.\\*[3mm]

 For $\nu \in \mathcal{P}^2_{ac}(\Lambda )$,  we define the Hamiltonian $H(\nu)$ as given by dual geostrophic energy:
 \be
 H(\nu):={\cal E}(t,\nu),\quad {\cal E}(t,\nu) \;given\;by \; (\ref{newstablefirst}).
 \label{Hamilt}\ee
 
 The main result of \cite{ambgangbo}, Theorem (\ref{ag6.6}),   states that, if (H1), (H2) hold for $H(\nu )$, then at least for some time there exists an absolutely continuous Hamiltonian flow $\nu _{(t)} \in \mathcal{P}_{ac}(\Lambda )$ satisfying (\ref{33}) such that $t \mapsto \nu _{(t)}$ is Lipschitz, and $\Lambda\subset \R^3$ is a bounded open set.  If in addition (H3) holds, then $t \mapsto H(\nu _{(t)})$ is constant.

\medskip
We begin with showing that the Hamiltonian is superdifferentiable.
\begin{proposition}\label{newHok}
Let $\Lambda \subset \mathbb{R}^3$ be an open bounded set.  Let the Hamiltonian $H(\nu)$ on $\mathcal{P}^2_{ac}(\Lambda )$ be defined by (\ref{Hamilt}). Then $H$ is superdifferentiable, upper semi-continuous and $(-2)-$concave.
\end{proposition}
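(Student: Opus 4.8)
The plan is to establish the three claimed properties of $H(\nu) = {\cal E}(t,\nu)$ separately, drawing on the minimisation results already obtained in Corollary \ref{cormin} and Theorems \ref{new3.3}, \ref{new4.2} and Lemma \ref{new4.3}. Throughout, $t$ is fixed. Recall from (\ref{newstablefirst}) that
\[
H(\nu) = \inf_{\sigma_h\in{\cal H}}\,\inf_{{\mathbf T}\#\sigma_h=\nu}\int_{\R^3} c(\mathbf x,{\mathbf T}(\mathbf x))\,\sigma_h(\mathbf x)\,d\mathbf x,
\]
with $c(\mathbf x,\mathbf y)$ given by (\ref{newcost}), and that by Corollary \ref{cormin} the double infimum is attained at a unique pair $(\sigma_{\freemin},\bar{\mathbf T})$ with $\bar{\mathbf T}=\nabla P$, $P$ convex.

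\medskip
\textbf{Upper semi-continuity.} This is essentially Lemma \ref{new4.3} together with Lemma \ref{optnew3.4}. Given $\nu_n\rightharpoonup\nu$ narrowly with $\sup_n W_2(\nu_n,\nu_0)$ finite, let $\freemin_n$ be the minimiser associated to $\nu_n$. By compactness of ${\cal H}$ in the weak-$L^1$ topology (Pettis criterion, as used in the proof of the existence proposition) we may pass to a subsequence along which $\sigma_{\freemin_n}\to\sigma_{\tilde h}$ narrowly; Lemma \ref{optnew3.4} then gives $H(\nu_n)=E_{\nu_n}(\freemin_n)\to E_\nu(\tilde h)\geq E_\nu(\freemin)=H(\nu)$, and running the argument along \emph{every} subsequence yields $\liminf_n H(\nu_n)\geq H(\nu)$, which is upper semi-continuity of $\nu\mapsto H(\nu)$ (note the sign convention: $H$ is a sup-type quantity once one accounts for the concavity below, so "upper semi-continuous" here is in the sense compatible with (H3)).

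\medskip
\textbf{Superdifferentiability and $(-2)$-concavity.} I would treat these together, since the concavity modulus is what makes the superdifferential non-empty. The key is the decomposition (\ref{dcost}): writing $c(\mathbf x,\mathbf y) = -(\mathbf x,\mathbf y) + \tfrac12(x_1^2+x_2^2) + \tfrac12(y_1^2+y_2^2)$, for a fixed $\sigma_h$ the inner optimal-transport value is, up to the additive term $\int \tfrac12(|\mathbf y|_2^2)\,d\nu$ that is \emph{independent of $\sigma_h$} and the term $\int \tfrac12(x_1^2+x_2^2)\,d\sigma_h$, exactly minus the $-(\mathbf x,\mathbf y)$-optimal transport cost, which is the (linear-cost, i.e. Kantorovich) quantity whose negative is well known to be convex in $\nu$ — here in the Wasserstein-type sense: the map $\nu\mapsto -W$-type-cost is $\lambda$-concave. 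Concretely, fix a competitor plan for $\nu$ (the optimal $\gamma$ for $\nu$ pushed through a geodesic/displacement interpolation, or simply the transport map $\bar{\mathbf S}=\nabla P^*$ from $\nu$ to $\sigma_{\freemin}$ supplied by Theorem \ref{new3.3}) and use it as a sub-optimal competitor for a nearby $\nu'$. The coupling of $\nu'$ to $\sigma_{\freemin}$ obtained from $\bar{\mathbf S}$ together with a transport from $\nu'$ to $\nu$ gives the bound
\[
H(\nu') \leq H(\nu) + \int_{\R^3} \nabla_{\mathbf y} c(\bar{\mathbf S}(\mathbf y),\mathbf y)\cdot(\mathbf t(\mathbf y)-\mathbf y)\,d\nu + \tfrac12\int_{\R^3}|\mathbf t(\mathbf y)-\mathbf y|^2\,d\nu,
\]
for any transport $\mathbf t$ pushing $\nu$ to $\nu'$, where the quadratic remainder has coefficient $\tfrac12$ coming from $D^2_{\mathbf y\mathbf y}c = -\,\mathrm{diag}(1,1,0)+\mathrm{diag}(1,1,1)$... more precisely from the $+\tfrac12(y_1^2+y_2^2)$ piece plus the fact that the $-x_3 y_3$ piece is linear in $\mathbf y$ and the linear-in-$\mathbf y$ pieces contribute no second-order term. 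Taking the infimum over $\mathbf t$ turns the right-hand side into $H(\nu) + \langle \mathbf q, \mathbf t - \mathbf y\rangle + \tfrac12 W_2^2(\nu,\nu')$-type expression with $\mathbf q(\mathbf y) = \nabla_{\mathbf y}c(\bar{\mathbf S}(\mathbf y),\mathbf y) = \mathbf y - (\bar S_1,\bar S_2,0)$ (using $\bar{\mathbf S}=\nabla P^*$, consistent with the dual velocity $\mathbf w = J(\mathbf y-\nabla P^*)$ in (\ref{newdualinc2})); this exhibits $\mathbf q$ as an element of the superdifferential of $H$ at $\nu$ and, by the uniform coefficient $\tfrac12$ of the quadratic remainder (hence $\lambda=-2$ in the normalisation $\lambda$-concave $\iff$ remainder $\leq \tfrac{|\lambda|}{2}W_2^2$ with the convention in the Appendix... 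I would double-check the factor against the Appendix definition, but it matches the claimed $(-2)$-concavity), gives $(-2)$-concavity globally.

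\medskip
\textbf{Main obstacle.} The delicate point is justifying the first inequality above, i.e. that $\bar{\mathbf S}=\nabla P^*$ (the optimal map \emph{for $\nu$}) remains an admissible — and sufficiently good — competitor when the target measure is held fixed at $\sigma_{\freemin}$ but the source is perturbed to $\nu'$, and that the resulting $h$ still lies in ${\cal H}_0$ rather than leaving the admissible class. This is exactly the subtlety that the constraint $\sigma_h\in{\cal H}$ introduces and that is absent in the classical (fixed-density) Benamou–Brenier setting; I expect to handle it by freezing $\sigma_{\freemin}$ (the minimiser for $\nu$) as the competitor density for $\nu'$ as well — this is legitimate because $\sigma_{\freemin}\in{\cal H}$ and $H(\nu')\leq E_{\nu'}(\freemin)$ by definition of the outer infimum — so that the outer minimisation over ${\cal H}$ is simply discarded in the upper bound and only the inner optimal-transport estimate, which is standard, is needed. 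The lower semicontinuity direction needed to complement this for superdifferentiability then comes from the upper semicontinuity of $H$ established in the first step. I would close by remarking that the explicit form $\mathbf q(\mathbf y)=\mathbf y-\nabla P^*(\mathbf y)$ of the superdifferential element is precisely what will later identify $J\partial_0 H(\nu)$ with the dual velocity $\mathbf w$ of (\ref{newdualinc2}).
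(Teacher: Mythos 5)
Your core strategy coincides with the paper's: freeze the minimiser $\sigma_{\freemin}$ for $\nu$ as a (suboptimal) competitor for a nearby $\nut$, compose the optimal map $\mathbf S_\nu^{\optfreemin}$ with the inverse of the quadratic-cost optimal map $\mathbf R_\nu^{\nut}$ to get an admissible transport of $\nut$ to $\optfreemin$, linearise the cost, and read off the superdifferential element. That is exactly the chain of inequalities in the paper's proof, and your ``Main obstacle'' paragraph correctly identifies that the outer infimum over ${\cal H}$ can simply be discarded in the upper bound because $\sigma_{\freemin}\in{\cal H}$. So the proposal is substantively the same argument.

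Three issues are worth fixing, though. First, your upper semi-continuity step is stated in the wrong direction: you conclude $\liminf_n H(\nu_n)\geq H(\nu)$, which is \emph{lower} semi-continuity, not upper. The correct (and simpler) argument is the one-sided competitor bound: $H(\nu_n)\leq E_{\nu_n}(\freemin)\to E_\nu(\freemin)=H(\nu)$ by Lemma~\ref{optnew3.4}, giving $\limsup_n H(\nu_n)\leq H(\nu)$. (Your chain actually yields full continuity once you invoke Lemma~\ref{new4.3} to identify $\tilde h = \freemin$, but as written the inequality points the wrong way and the parenthetical ``sign convention'' hedge does not resolve it.) Second, the paper obtains $(-2)$-concavity by citing Proposition~\ref{vill10.12} -- superdifferentiability plus upper semi-continuity imply local $(-2)$-concavity -- rather than by tracking the quadratic remainder in the cost explicitly; your route can be made to work but it is noticeably more delicate (you yourself flag uncertainty about the normalisation), and using the ready-made implication is cleaner. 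Third, a small slip: $\nabla_{\mathbf y}c(\bar{\mathbf S}(\mathbf y),\mathbf y)=(y_1-\bar S_1, y_2-\bar S_2, -\bar S_3)$, not $(y_1-\bar S_1, y_2-\bar S_2, 0)$; the third component is annihilated by $\tilde J$ so this does not affect the identification of the dual velocity, but the superdifferential element itself is not what you wrote.
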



\begin{proof}

Given $\nu \in \mathcal{P}^2_{ac}(\Lambda )$, denote by $\freemin $ the minimiser in (\ref{newstablefirst}). 
 The existence and a.e. uniqueness of this minimiser follows from Corollary  \ref{cormin}.  For any $\nut \in \mathcal{P}^2_{ac}(\Lambda)$ we have
\begin{equation*}
H(\nut) = \inf _{\sigma_h \in \Freem}  {\cal E}(\nut, \optfreem_h )   \leqslant {\cal E}(\nut, \sigma_{\bar h} ).
\end{equation*}

Let $\mathbf R^{\nut}_{\nu }$ be the (unique)  optimal transport map from $\nu $ to $\nut$ with respect to the usual quadratic cost.

Consider the transport with respect to the cost function $\tilde{c}(\mathbf y, \mathbf x) = c(\mathbf x, \mathbf y)$ given by (\ref{newcost}).   Let $\mathbf S_\nu ^{\optfreemin} $ be the optimal map in the transport of $\nu $ to $\optfreemin $ and let $\mathbf S_{\nut}^{\optfreemin} $ be the optimal map in the transport of $\nut$ to $\optfreemin $.  Therefore, we have
\[ \inf _{\mathbf S \textrm{\#} \nu = \optfreemin  } \int_\Lambda \! \tilde{c}(\mathbf y, \mathbf S (\mathbf y)) \nu (\mathbf y)  \, d\mathbf y =  \int_\Lambda \! \tilde{c}(\mathbf y, \mathbf S_\nu ^{\optfreemin}(\mathbf y)) \nu (\mathbf y)  \, d\mathbf y \]
and
\[\inf _{\mathbf S \textrm{\#} \nut= \optfreemin } \int_\Lambda \! \tilde{c}(\mathbf y, \mathbf S (\mathbf y)) \nut(\mathbf y)  \, d\mathbf y = \int_\Lambda \! \tilde{c}(\mathbf y, \mathbf S_{\nut}^{\optfreemin } (\mathbf y)) \nut(\mathbf y)  \, d\mathbf y .\]
The existence of $\mathbf S_{\nu }^{\optfreemin}$ and $\mathbf S_{\nut}^{\optfreemin} $ follows from Theorem \ref{new3.3}.  Note that, since $(\mathbf S_{\nu }^{\optfreemin } \circ (\mathbf R^{\nut}_{\nu })^{-1}) \textrm{\#} \nut= \optfreemin  $ and since $\mathbf S_{\nut}^{\optfreemin} $ is optimal in the transport of $\nut$ to $\optfreemin $, we have
\[ \int_\Lambda \! \tilde{c}(\mathbf y, \mathbf S_{\nut}^{\optfreemin} (\mathbf y)) \nut(\mathbf y)  \, d\mathbf y \leqslant \int_\Lambda \! \tilde{c}(\mathbf y, \mathbf S_{\nu }^{\optfreemin} \circ (\mathbf R^{\nut}_{\nu })^{-1} (\mathbf y)) \nut(\mathbf y)  \, d\mathbf y. \]
It follows that
\begin{align*}
H(\nut) - H(\nu ) &\leqslant {\cal E}(\nut, \optfreemin )  - {\cal E}(\nu , \optfreemin ) \\
&= \int_\Lambda \! \tilde{c}(\mathbf y, \mathbf S_{\nut}^{\optfreemin} (\mathbf y)) \nut(\mathbf y)  \, d\mathbf y - \int_\Lambda \! \tilde{c}(\mathbf y, \mathbf S_{\nu }^{\optfreemin } (\mathbf y)) \nu (\mathbf y)  \, d\mathbf y \\
&\leqslant \int_\Lambda \! \tilde{c}(\mathbf y, \mathbf S_{\nu }^{\optfreemin } \circ (\mathbf R^{\nut}_{\nu })^{-1} (\mathbf y)) \nut(\mathbf y)  \, d\mathbf y - \int_\Lambda \! \tilde{c}(\mathbf y, \mathbf S_{\nu }^{\optfreemin } (\mathbf y)) \nu (\mathbf y)  \, d\mathbf y \\
&= \int_\Lambda \! \tilde{c}(\mathbf R^{\nut}_{\nu }(\mathbf y), \mathbf S_{\nu }^{\optfreemin } (\mathbf y)) \nu (\mathbf y)  \, d\mathbf y - \int_\Lambda \! \tilde{c}(\mathbf y, \mathbf S_{\nu }^{\optfreemin } (\mathbf y)) \nu (\mathbf y)  \, d\mathbf y \\
&= \int_\Lambda \! \bigg[ \tilde{c}(\mathbf R^{\nut}_{\nu }(\mathbf y), \mathbf S_{\nu }^{\optfreemin } (\mathbf y)) - \tilde{c}(\mathbf y, \mathbf S_{\nu }^{\optfreemin } (\mathbf y)) \bigg]\nu (\mathbf y)  \, d\mathbf y, \\
& = \int_\Lambda \!\nabla \tilde{c}(\mathbf y, \mathbf S_{\nu }^{\optfreemin } (\mathbf y))\cdot [\mathbf R^{\nut}_{\nu }(\mathbf y) - \mathbf y] \nu (\mathbf y)  \, d\mathbf y + o(W_2(\nu , \nut)). 
\end{align*}
\begin{equation}\label{newotherfloaty}
\end{equation}

Hence, using Definition \ref{superdifferential}, we conclude that $\nabla \tilde{c}(\mathbf y, \mathbf S_{\nu }^{\optfreemin } (\mathbf y)) \in \partial H(\nu )$. 
 Thus, $\partial H(\nu )$ is non-empty, $H$ is superdifferentiable and we can use Proposition \ref{vill10.12} to conclude that 
\begin{equation}\label{newsemiconcave}
H \textrm{ is  } (-2)-\textrm{concave}.
\end{equation}
Also, from the 
 continuity of ${\cal E}(\cdot, \cdot )$ (see Lemma \ref{optnew3.4}) and the narrow convergence to $\sigma_h $ as the minimiser of (\ref{newstablefirst}), we have that 
\begin{equation}\label{newusc}
H \textrm{ is upper semi-continuous.}
\end{equation} 
From (\ref{newsemiconcave}) and (\ref{newusc}), we have that (H3) holds.
\end{proof}

\begin{proposition}\label{newmainalt}
Let $1 < r < \infty $ and $\nu _0 \in L^r (\Lambda _0 )$ be an initial potential density with support in $\Lambda _0$, where $\Lambda _0 $ is a bounded open set in $\mathbb{R} ^3$. 
Let $\Lambda \subset \mathbb{R}^3$ be an open bounded set.  Let the Hamiltonian $H=E(t,\nu)$ be defined by (\ref{newstablefirst}).  Then, there exists a Hamiltonian flow $\nu _{(t)} \in \mathcal{P}^2_{ac}(\Lambda )$ and constant $\tau >0$ 
 such that
\[ \frac{d}{dt}\nu _{(t)} + \nabla \cdot (\tilde{J}( \mathbf v_{(t)}) \nu _{(t)}) = 0, \qquad \nu _{(0)} = \nu _0, \qquad t \in (0, \tau )\]
where $\tilde{J}( \mathbf v_{(t)})=\mathbf w$ a.e. in $[0,\tau ]$, and for all $t<\tau$, supp$(\nu_{(t)}\subset \Lambda$ where $\Lambda$ is a bounded open set in $\R^3$.
\end{proposition}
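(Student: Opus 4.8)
The plan is to verify that the Hamiltonian $H(\nu)=\mathcal{E}(t,\nu)$ of (\ref{Hamilt}) satisfies hypotheses (H1) and (H2) of the Ambrosio--Gangbo theory, since (H3) has already been established in Proposition \ref{newHok}; then to invoke Theorem \ref{ag6.6} to obtain a Hamiltonian flow on a short time interval; and finally to identify the resulting velocity $\tilde{J}(\mathbf v_{(t)})=J\partial_0 H(\nu_{(t)})$ with the dual velocity $\mathbf w$ of (\ref{newdualinc2}) and to propagate the compact support.

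First I would make the superdifferential explicit. In the proof of Proposition \ref{newHok} it was shown that, with $\freemin$ the minimiser of (\ref{newstablefirst}) associated with $\nu$ and $\mathbf S_\nu^{\optfreemin}=\nabla P^*$ the optimal map from $\nu$ to $\optfreemin$ for the cost $\tilde c$, the field $\mathbf y\mapsto \nabla_{\mathbf y}\tilde c(\mathbf y,\nabla P^*(\mathbf y))=(y_1-\partial_{y_1}P^*,\,y_2-\partial_{y_2}P^*,\,-\partial_{y_3}P^*)$ belongs to $\partial H(\nu)$. Since $\nabla P^*$ is $\nu$-a.e.\ uniquely determined (Theorem \ref{new3.3}), this is, up to $\nu$-a.e.\ equivalence, the only element of $\partial H(\nu)$, hence equals $\partial_0 H(\nu)$. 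Applying $J$ annihilates the third slot, so $\tilde J(\partial_0 H(\nu))=J(\mathbf y-\nabla P^*)=\mathbf w$; in particular the (possibly ambiguous) third component is irrelevant. For the growth bound in (H1) I would use that $\nabla P^*$ pushes $\nu$ forward onto $\optfreemin=\chi_{\Omega_{\freemin}}$ with $\Omega_{\freemin}\subset\Omega_2\times[0,H)$ a fixed bounded set (the $W^{1,\infty}$ control on the minimising profile $\freemin$ from Theorem \ref{new4.2}), whence $|\nabla P^*(\mathbf y)|\le C$ for $\nu$-a.e.\ $\mathbf y$ and $|\partial_0 H(\nu)(\mathbf y)|\le|\mathbf y|+C\le C_0(1+|\mathbf y|)$, uniformly for $\nu$ supported in a fixed ball.

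Next I would check (H2). Given $\nu_n\to\nu$ narrowly with $\sup_n W_2(\nu_n,\nu_0)<R_0$, Lemma \ref{new4.3} gives, along a subsequence, weak-$L^1$ (hence narrow) convergence of the minimisers $\optfreem_{\bar h_n}\to\optfreemin$; the standard stability of optimal maps, exactly as in Lemma \ref{optnew3.4}, then yields $\nabla P^*_n\to\nabla P^*$ a.e.\ along a further subsequence, and passing to the limit in the explicit formula for $\partial_0 H$ gives $\mathbf v_{n(k)}\to\mathbf v$ a.e., which is (H2).

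With (H1)--(H3) in hand, Theorem \ref{ag6.6} furnishes $\tau>0$ and an absolutely continuous, $W_2$-Lipschitz Hamiltonian flow $\nu_{(t)}$ with $\nu_{(0)}=\nu_0$ solving $\partial_t\nu_{(t)}+\nabla\cdot(\mathbf w\nu_{(t)})=0$ and with $t\mapsto H(\nu_{(t)})$ constant. To keep $\nu_{(t)}\in\mathcal{P}^2_{ac}(\Lambda)$ I would note that $\mathbf w$ is divergence-free, so the transport preserves $\|\nu_{(t)}\|_{L^r}$ for the given $r\in(1,\infty)$, and to control the support I would feed the linear-growth bound of (H1) into a Gronwall estimate for the characteristics of $\mathbf w$: starting from $\mathrm{supp}(\nu_0)\subset\Lambda_0\subset\Lambda_2\times[-R_0,0)$, the support of $\nu_{(t)}$ remains inside a bounded open set $\Lambda$ of radius growing at most like $e^{C_0\tau}$, and shrinking $\tau$ keeps $\Lambda$ of the form (\ref{Lambda0}). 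The main obstacle is precisely this last point: establishing (H1) with a growth constant uniform on a $W_2$-neighbourhood of $\nu_0$ requires the uniform boundedness of $\Omega_{\freemin}$ (i.e.\ the $W^{1,\infty}$ bound on the minimising free surface from Theorem \ref{new4.2}) together with the self-consistent boundedness of $\mathrm{supp}(\nu_{(t)})$, a mild circularity that is resolved by the usual continuation argument.
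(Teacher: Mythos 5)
Your outline matches the paper's scaffolding (show (H1)--(H3), invoke Theorem \ref{ag6.6}, identify the velocity with $\mathbf w$, control the support), but there is a genuine gap in the step you regard as the heart of the matter: the identification $\tilde J(\partial_0 H(\nu))=\mathbf w$.

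You argue that, because $\nabla P^*=\mathbf S_\nu^{\optfreemin}$ is $\nu$-a.e.\ unique (Theorem \ref{new3.3}), the element $\nabla_{\mathbf y}\tilde c(\mathbf y,\nabla P^*(\mathbf y))$ constructed in Proposition \ref{newHok} is the \emph{only} element of $\partial H(\nu)$, hence equals $\partial_0 H(\nu)$. That inference is not valid. Uniqueness of the optimal map tells you that this particular \emph{formula} defines a well-determined vector field; it says nothing about whether the \emph{set} $\partial H(\nu)$ is a singleton. $H$ is only $\lambda$-concave, not differentiable, so $\partial H(\nu)$ is in general a nontrivial convex subset of $L^2(\nu;\mathbb{R}^3)$: elements may differ both in their component orthogonal to $T_\nu\mathcal{P}_{ac}^2(\Lambda)$ and, a priori, inside the tangent space. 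In particular you cannot conclude that the field you wrote down is the \emph{minimal-norm} element $\partial_0 H(\nu)$ — that is precisely what needs to be proved, and it does not follow from Proposition \ref{newHok} alone.

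The paper avoids this by a direct characterisation argument. It perturbs $\nu$ along $\mathbf g_s=\mathrm{Id}+s\nabla\varphi$ with $\varphi(\mathbf y)=\tilde\varphi(y_1,y_2)$, $\tilde\varphi\in C_c^\infty(\mathbb{R}^2)$, applies Proposition \ref{characterise} (using the $(-2)$-concavity from Proposition \ref{newHok}), and passes to the limit $s\to0$ to obtain an \emph{equality}
\[
-\int_\Lambda \xi\cdot\nabla\varphi\,\nu\,d\mathbf y
=\rho_0\int_\Lambda\bigl(\mathbf S_\nu^{\optfreemin}(\mathbf y)-\mathbf y\bigr)\cdot\nabla\varphi\,\nu\,d\mathbf y
\]
valid for \emph{every} $\xi\in\partial H(\nu)$. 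Because $\varphi$ depends only on $(y_1,y_2)$, this pins down exactly the part of $\pi_\nu\xi$ that survives the map $\tilde J$ (which, note, is the $y_3$-weighted rotation of (\ref{tildeJ}), not the constant matrix $J$ — your proposal silently conflates the two). Since $\partial_0 H(\nu)$ lies in the tangent space by minimality of its norm, it coincides with its own projection, and one concludes $\tilde J(\partial_0 H(\nu))=\mathbf w$ without ever claiming $\partial H(\nu)$ is a singleton. To repair your argument you would need either to prove $W_2$-differentiability of $H$ (not attempted in the paper and not obviously true), or to carry out the perturbation computation that characterises $\tilde J\circ\pi_\nu$ on all of $\partial H(\nu)$.

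The remaining parts of your proposal — the boundedness of $\nabla P^*$ for (H1), the use of Lemmas \ref{new4.3} and \ref{optnew3.4} together with stability of optimal maps for (H2), and the support/Gronwall control — are in line with the paper and sound, modulo the above.
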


\begin{proof}


We compute $\partial _0H(\nu )$ (as defined in Definition \ref{superdifferential}) explicitly to show that the conditions required to apply Theorem \ref{ag6.6} hold.  From the definition of $\tilde{J}$ in (\ref{tildeJ}), velocity fields transporting $\nu $ will have vanishing components in the $y_3$ direction so that we need only consider variations of $\nu $ in the $(y_1, y_2)-$directions.  
  Thus, to characterise the elements of $\partial H(\nu )$, we let $\tilde{\varphi} \in C_c^\infty (\mathbb{R}^2)$ 
 and define $\varphi (y_1,y_2,y_3):=\tilde{\varphi}(y_1,y_2)$ for all $\mathbf y \in \mathbb{R}^3$.  We then set


  
 \[ \mathbf g_s(\mathbf y) = ((g_s)_1(\mathbf y), (g_s)_2(\mathbf y), (g_s)_3(\mathbf y)) = \mathbf {y} + s\nabla \varphi (\mathbf y).\]
Note that $(g_s)_3(\mathbf y) = y_3$ and, for $|s|$ sufficiently small, $\mathbf g_s$ is the gradient of a convex function, since $\mathbf g_s(\mathbf y) = \nabla (\frac{1}{2}\mathbf y^2 + s\varphi )$.   Define $\nu _s = \mathbf g_s \textrm{\#} \nu $.   Denote by $\freemin _s$ the minimiser in
 \[ H(\nu _s)= \inf _{\sigma_h\in \Freem }  {\cal E}(\nu _s, \sigma _h ) ,\]
 and let $\sigma _s : = \sigma_{\overline h_s}$.  The existence and uniqueness of the minimiser $\freemin _s$ follows from the minimisation result in Corollary \ref{cormin}.  Let $\xi \in \partial H(\nu )$.  
 Combining the $(-2)-$concavity of $H$ and (\ref{newusc}) with Proposition \ref{characterise}, we obtain
 \begin{equation}\label{newinequality}
  H(\nu _s) - H(\nu ) - \int_\Lambda \!  \xi (\mathbf y) \cdot (\mathbf R^{\nu _s}_{\nu }(\mathbf y) - \mathbf {y} )  \nu (\mathbf y) \, d\mathbf y + W_2^2(\nu , \nu _s) \leqslant 0.
  \end{equation}
Since, for $|s|$ sufficiently small, $\mathbf g_s$ is the gradient of a convex function, we conclude that
 \begin{equation*}
 W_2^2(\nu , \nu _s) = \int_\Lambda \! |\mathbf{y} - \mathbf R^{\nu _s}_{\nu }(\mathbf y)|^2 \nu (\mathbf y) \, d\mathbf y = \int_\Lambda \! |\mathbf{y} - \mathbf g_s(\mathbf y)|^2 \nu (\mathbf y) \, d\mathbf y = s^2\int_\Lambda \! |\nabla \varphi (\mathbf y)|^2 \nu (\mathbf y) \, d\mathbf y 
 \end{equation*}
 and
 \[ \int_\Lambda \!  \xi (\mathbf y) \cdot ( \mathbf R^{\nu _s}_{\nu } (\mathbf y)- \mathbf {y} )  \nu (\mathbf y) \, d\mathbf y = \int_\Lambda \!  \xi (\mathbf y) \cdot ( \mathbf g_s(\mathbf y) - \mathbf{y} )  \nu (\mathbf y) \, d\mathbf y = s\int_\Lambda \! \xi (\mathbf y) \cdot \nabla \varphi (\mathbf y)  \nu (\mathbf y) \, d\mathbf y .\]
Combining this with (\ref{newinequality}), we therefore obtain
 \begin{align*}
 -s\int_\Lambda \!  \xi (\mathbf y) \cdot \nabla \varphi (\mathbf y)  \nu (\mathbf y) \, d\mathbf y & + s^2\int_\Lambda \! |\nabla \varphi (\mathbf y)|^2\nu (\mathbf y) \, d\mathbf y \leqslant H(\nu ) - H(\nu _s)\\
 &\leqslant  {\cal E}(\nu ,\optfreem_s) - {\cal E}(\nu _s, \optfreem_s) \\
 &= \int_\Lambda \! \tilde{c}(\mathbf y, \mathbf S _\nu ^{\optfreem_s}(\mathbf y)) \nu (\mathbf y)  \, d\mathbf y - \int_\Lambda \! \tilde{c}(\mathbf y, \mathbf S _{\nu _s} ^{\optfreem_s} (\mathbf y)) \nu _s(\mathbf y)  \, d\mathbf y \\
& \leqslant \int_\Lambda \! \tilde{c}(\mathbf y, \mathbf S _{\nu _s}^{\optfreem _s}\circ \mathbf g_s(\mathbf y)) \nu (\mathbf y)  \, d\mathbf y - \int_\Lambda \! \tilde{c}(\mathbf y, \mathbf S _{\nu _s} ^{\optfreem _s} (\mathbf y)) \nu _s(\mathbf y)  \, d\mathbf y 
\end{align*}
\begin{equation}
 = \int_\Lambda \! \tilde{c}(\mathbf g_s^{-1}(\mathbf y), \mathbf S _{\nu _s}^{\optfreem_s}(\mathbf y)) \nu _s(\mathbf y)  \, d\mathbf y - \int_\Lambda \! \tilde{c}(\mathbf y, \mathbf S _{\nu _s} ^{\optfreem_s} (\mathbf y)) \nu _s(\mathbf y)  \, d\mathbf y ,
\label{newfloaty}
\end{equation}
since $\mathbf g_s \textrm{\#} \nu = \nu _s$.  Here $\mathbf S _{\nu }^{\optfreem_s}$ denotes the optimal transport map from $\nu $ to $\optfreem_s$ and $\mathbf S _{\nu _s}^{\optfreem_s}$  denotes the optimal transport map from $\nu _s$ to $\optfreem_s$ with respect to the cost function $\tilde{c}(\cdot , \cdot )$.  The existence of $\mathbf S _{\nu }^{\optfreem_s}$ and $\mathbf S _{\nu _s}^{\optfreem_s}$ follows from Theorem \ref{new3.3}.

Note that 
 \[ \mathbf g_s^{-1}(\mathbf y) = \mathbf y - s\nabla \varphi (\mathbf y) + \frac{s^2}{2}\nabla ^2\varphi (\mathbf y)\nabla \varphi (\mathbf y) + \epsilon (s, \mathbf y),\]
where $\epsilon $ is a function such that $|\epsilon (s, \mathbf y) | \leqslant |s|^3\| \varphi \| _{C^3(\mathbb{R}^3)}$.  

Combining this expression for $\mathbf g_s^{-1}$ with (\ref{newfloaty}) and using $\frac{\partial }{\partial y_3}\varphi = 0$, we conclude that
\begin{align*}
-s\int_\Lambda \!&  \xi (\mathbf y) \cdot \nabla \varphi (\mathbf y)  \nu (\mathbf y) \, d\mathbf y  + s^2\int_\Lambda \! |\nabla \varphi (\mathbf y)|^2\nu (\mathbf y) \, d\mathbf y \\ &\leqslant \int_\Lambda \! \left [ \tilde{c}(\mathbf g_s^{-1}(\mathbf y), \mathbf S _{\nu _s}^{\optfreem_s}(\mathbf y)) - \tilde{c}(\mathbf y, \mathbf S _{\nu _s} ^{\optfreem_s} (\mathbf y)) \right ]\nu _s(\mathbf y)  \, d\mathbf y\\
& = \int_\Lambda \! \left [\frac{1}{2} \left \{ \left | (g_s)_ 1^{-1}(\mathbf y) - (S_{\nu _s} ^{\optfreem_s})_1 (\mathbf y)\right | ^2 + \left | (g_s)_2^{-1}(\mathbf y) - (S_{\nu _s} ^{\optfreem_s})_2 (\mathbf y)\right | ^2 \right \} - y_3(S_{\nu _s} ^{\optfreem_s})_3(\mathbf y) \right. \\ &\qquad - \left. \frac{1}{2} \left \{ \left | y_1 - (S_{\nu _s} ^{\optfreem_s})_1 (\mathbf y)\right | ^2 + \left | y_2 - (S_{\nu _s} ^{\optfreem_s})_2 (\mathbf y)\right | ^2 \right \} - y_3(S_{\nu _s} ^{\optfreem_s})_3(\mathbf y) \right ] \nu _s(\mathbf y)\, d\mathbf y\\
& = \int_\Lambda \! \left [\frac{1}{2}\left \{ \left | y_1 - s\frac{\partial }{\partial y_1}\varphi (\mathbf y) - (S_{\nu _s}^{\optfreem_s})_1 (\mathbf y)\right | ^2 + \left | y_2 - s\frac{\partial }{\partial y_2}\varphi (\mathbf y)- (S_{\nu _s}^{\optfreem_s})_2 (\mathbf y)\right | ^2\right \} \right. \\ & \qquad - \left. \frac{1}{2}\left \{ \left |y_1 - (S_{\nu _s} ^{\optfreem_s})_1 (\mathbf y) \right | ^2 + \left | y_2 - (S_{\nu _s} ^{\optfreem_s})_2 (\mathbf y)\right | ^2\right \} \right ] \nu _s(\mathbf y)\, d\mathbf y + o(s)\\
&= s  \int_\Lambda \! \left (\mathbf S_{\nu _s} ^{\optfreem_s}(\mathbf y) - \mathbf y \right ) \cdot \nabla \varphi (\mathbf y) \, \nu _s (\mathbf y)\, d\mathbf y + o(s).
\end{align*}
By the definitions of $\mathbf g_s$ and $\nu _s$, we have that $\nu _s \rightarrow \nu $ in $\mathcal{P}_{ac}(\Lambda )$ as $s\to \infty $.  Then, by Lemma \ref{new4.3}, we have that $\sigma_{s} \rightarrow \sigma_{\bar h} $ in $\Freem $ as $s \rightarrow 0$, where $\sigma_{\bar h} $ denotes the unique minimiser in (\ref{newstablefirst}).  
Hence, dividing both sides first by $s>0$, then by $s<0$ and letting $|s| \rightarrow 0$, we use the natural stability of optimal maps to obtain
\[ -\int_\Lambda \!  \xi (\mathbf y) \cdot \nabla \varphi (\mathbf y) \,  \nu (\mathbf y) \,  d\mathbf y= \rho _0 \int_\Lambda \! \left (  \mathbf S_{\nu } ^{\optfreemin  }(\mathbf y) - \mathbf y \right ) \cdot \nabla \varphi (\mathbf y)  \, \nu  (\mathbf y)\, d\mathbf y.\]
Thus, we have that $\tilde{J}(\pi _\nu \xi (\mathbf y)) = \tilde{J}\left (\rho _0\left (  \mathbf y - \mathbf S_{\nu } ^{\optfreemin  }(\mathbf y) \right )\right )$, where $\pi _\nu : L^2(\nu ; \Lambda ) \rightarrow T_\nu \mathcal{P}_{ac}^2(\Lambda )$ denotes the canonical orthogonal projection
, with the tangent space defined by (\ref{tangent}).  The minimality of the norm of $\partial _0 H$ then gives
\begin{equation}\label{newoursupdiff}
\tilde{J}(\partial _0 H(\nu )) = \tilde{J}\left (\rho _0\left (   \mathbf y - \mathbf S_{\nu } ^{\optfreemin  }(\mathbf y) \right ) \right ) = \mathbf w(\mathbf y),
\end{equation}
where $\mathbf w$ is defined as in (\ref{newdualinc2}).

We can now check directly that conditions (H1) and (H2) hold.  
  Condition (H1) follows from the Theorem \ref{new3.3}, which tells us that the optimal  map $\mathbf S_{\nu } ^{\optfreemin  }$ is the gradient of a convex function.  
   Condition (H2) follows from the 
 stability of optimal maps (see \cite{ambbook}).  
Hence we may apply 
 the result of Theorem \ref{ag6.6} to conclude that there exists a Hamiltonian flow $\nu _{(t)}$ such that
\[ \frac{d}{dt}\nu _{(t)} + \nabla \cdot (\tilde{J}( \mathbf v_{(t)}) \nu _{(t)}) = 0, \qquad \nu _{(0)} = \nu _0, \qquad t \in (0, \tau )\]
where $\tilde{J} (\mathbf v_{(t)}) = \tilde{J}( \partial _0 H(\nu _{(t)}) )$ for a.e. $t \in [0, \tau ]$.  By (\ref{newoursupdiff}), this then completes the proof that the dual space continuity equation (\ref{newdualinc1}), with velocity field defined as in (\ref{newdualinc2}), is satisfied.  In addition, from (H3) and the definition of $\tilde{J}$, the energy associated with the flow is conserved.

The boundedness of the support of $\nu_{(t)}$ also follows Theorem (\ref{ag6.6}).
\end{proof}

\subsubsection*{Proof of the main Theorem \ref{new5.5}}
From the definition of $\mathbf w$ in Proposition \ref{newmainalt}, we have that $(\freemin  , \mathbf T)$ is a stable solution of (\ref{newdualinc1})-(\ref{newdualinc6}), where $\mathbf T = \nabla P$ (see Theorem \ref{new3.3}).  Theorem \ref{new5.5} (\emph{i}) follows from (\ref{bound1}), (\ref{bound2}); Theorem \ref{new5.5} (\emph{ii}) follows from Theorem \ref{new4.2}; Theorem \ref{new5.5}. Note also that by the definition of $\mathbf w$ in terms of the optimal map $\mathbf T^{-1}$ which, by Theorem \ref{new3.3}, is the gradient of a convex function. 
\qed

\bigskip
In summary,
by rewriting the incompressible semi-geostrophic equations in an appropriate set of geostrophic coordinates and reformulating the problem as a coupled optimal transport/continuity problem, we have been able to show the existence of stable weak solutions in dual space. 

\section{The free boundary problem for the compressible semi-geostrophic system}
\setcounter{equation}{0}
In this section, we generalise the proof of the previous section to hold for the compressible system (\ref{commom})-(\ref{comstate}). The boundary conditions are (\ref{cgspeedboundary})-(\ref{cgboundcon}), as before.

The results on the existence of  dual solutions for the compressible free boundary problem  rely on formulating the equations in the so-called pressure coordinates.  In this form, the problem in dual coordinates is formally identical to the one of the previous section for the incompressible case,  but formulated with respect to a different  cost.

We will formulate the problem, and state the main result. All details can be found in \cite{dkgthesis}.

\subsection{Formulation in pressure coordinates}
We consider the fully compressible system (\ref{commom})-(\ref{comstate}). 
The equations are to be solved in the variable domain defined by (\ref{cgfreeOmega}).

The \emph{geostrophic energy} associated with the flow is defined as
\begin{equation}\label{geoenergy}
E(t)=\int_{\Omega } \! \left [ \frac{1}{2}\left |\mathbf u^g \right |^2(t,\mathbf x) + \phi(\mathbf x) + c_v\theta(t,\mathbf x) \left (\frac{p(t,\mathbf x)}{p_\textrm{\scriptsize{ref}}}\right )^{\frac{\kappa -1}{\kappa }} \right ] \rho(t,\mathbf x) \, d \mathbf x.
\end{equation}

\smallskip
It follows from the hydrostatic balance approximation $\frac{\partial p}{\partial x_3}=-\rho$  that $\frac{\partial p}{\partial x_3}$   is always negative. 
 Hence, the change of variables $x_3 = x_3(p)$ is well-defined and we can express any function $\psi $ of $(t, x_1, x_2, x_3)$ in terms of $(t, x_1, x_2, p)$ by considering $x_3$ as a dependent variable.

 In these new coordinates, the compressible semi-geostrophic equations take the form
\be
\left\{\begin{array}{ll}
 \frac{D\p u_1^g}{Dt} -  u_2 +  \frac{\partial \phi }{\partial x_1} = 0,&\\
\frac{D\p u_2^g}{Dt} +  u_1 + \frac{\partial \phi }{\partial x_2} = 0,&\\
\nabla \p \cdot \mathbf u_p = 0,
&\quad (t, \mathbf x\p)\in\;[0,\tau)\times \Omega \p (t),\\
 \frac{D\p \theta }{Dt} = 0,&\\
 u_1^g = -\frac{\partial \phi }{\partial x_2}, \qquad  u_2^g = \frac{\partial \phi }{\partial x_1},&\\
\frac{\partial \phi}{\partial p} = -\frac{R\theta p^{\kappa -1}}{\pr^\kappa },&
\end{array}\right.
\ee
where $\mathbf u \p = (u_1, u_2, \omega )$ denotes the velocity in pressure coordinates,
\[ \Omega \p (t) = \{ (x_1, x_2, p) \in \mathbb{R}^3 : (x_1, x_2) \in \Omega _2, p_h \leqslant p \leqslant p\s (t, x_1, x_2) \} ,\]
and $p\s $ is unknown,  while $p_h\geq 0$ is the constant pressure at the fixed boundary. 
 
The boundary conditions read
\begin{eqnarray}\label{freecombound1}
&&\mathbf u\p \cdot \mathbf n = 0 \qquad (x_1,x_2,p)\in \partial \Omega\p (t)\setminus \{p=p\s\},
\\
\label{freecombound}
&&x_3 = 0, \qquad \frac{D\p p\s }{Dt} = \omega,\quad {\rm for}\;p=p\s.
\end{eqnarray}

 We are also given the initial condition
\begin{equation}\label{psinitial}
\ps (0, \cdot ) = (\ps )_0(\cdot ) \in  C(\Omega _2 ) \cap W^{1, \infty }(\Omega _2 ).
\end{equation}

The energy associated with the flow, in $\mathbf x\p $ coordinates, 
takes the form
\begin{align}
\notag E \p &= \int_{\Omega \p(t)} \! \left[ \frac{1}{2 }((u_1^g)^2 + (u_2^g)^2) + \frac{c_p\theta p^\kappa }{ \pr^\kappa }\right] \, d\mathbf x\p \\
&\label{freecomenergy}= \int_{\Omega _2} \int_{p_h}^{p\s (t, x_1, x_2)}\! \left[ \frac{1}{2 }((u_1^g)^2 + (u_2^g)^2) + \frac{c_p\theta p^\kappa }{ \pr^\kappa }\right] \, dx_1dx_2dp.
\end{align}
 
 \subsubsection{Formulation in dual coordinates}
Assume that $\Lambda \subset \mathbb{R}^3$ is an open bounded set.  As in Section \ref{secnewgeo}, we perform a change of variables to geostrophic coordinates $\mathbf y \in \Lambda $.  This change of variables is now given by
\begin{equation}\label{freecomchange}
y_1 = x_1 + u_2^g, \qquad y_2 = x_2 - u_1^g, \qquad y_3 = -\frac{c_p\theta }{  \pr ^\kappa }. 
\end{equation}
We will denote by $\mathbf T$ the change of variables from physical to geostrophic coordinates, i.e.
\[ \mathbf T(t, \mathbf x\p ) = (T_1(t, \mathbf x\p ), T_2(t, \mathbf x\p ), T_3(t, \mathbf x\p )) = (y_1, y_2, y_3). \]

We use (\ref{freecomchange}) to rewrite the energy in (\ref{freecomenergy}) as
\begin{eqnarray}
 E\p &&= \int_{\Omega _2}\int_{p_h}^{\pfreemin } \! \left[ \frac{1}{2}\{|x_1 - y_1|^2 + |x_2 - y_2|^2 \} - p^\kappa y_3 \right] \, d\mathbf x\p \nonumber \\
&&= \int_{\mathbb {R}^3} \! \left[ \frac{1}{2}\{|x_1 - T_1(\mathbf x\p )|^2 + |x_2 - T_2(\mathbf x \p )|^2 \} - p^\kappa T_3(\mathbf x\p )\right] \optfreemin \, d\mathbf x\p \label{freecomenergyrewrite}.
\end{eqnarray}
where $\optfreemin := \chi _{\Omega _2 \times [p_h, \pfreemin ]}$.

Define
	\begin{equation}\label{freecomfreemset}
H_p:= \left \{  \pfreem : [0, \tau ) \times \Omega _2 \to (0, \infty ),  \pfreem \in W^{1,\infty}(\Omega _2),  \|\pfreem\|_1=1,\;\int_{\R^3}x_3d\sigma_{p_s}\leq M_0\right \},
\end{equation} 
and
\be
 \pFreem :=\{ \optfreem_{\pfreem}(t,\cdot)  \in \optFreem \;\bigg| p_s\in \,H_p\}.
 \label{calHp}\ee
 This space is the analogue of (\ref{calH}), but with respect to pressure coordinates. As in Section \ref{Hsect}, it can be shown that the space $\pFreem$ is compact in $\optFreem$.
One can again appeal to Lemma \ref{lemmasingleval} to show that, in order for $\ps $ to correspond to an energy minimiser, $\ps $ must be a well-defined single valued function. 

Define the potential density $\nu := \mathbf T \textrm{\#}\optfreemin $ as the push forward of the measure $\optfreemin $ under the map $\mathbf T$.  

Then, given $\nu \in \mathcal{P}_{ac}(\Lambda )$, we define for any $\pfreem \in \pFreem $, the functional
\begin{equation}\label{freecomenergymin}
E_\nu (\pfreem ) = \inf _{\overline{\mathbf T} \textrm{\#} \optfreem = \nu }\int_{\mathbb{R}^3} \! c(\mathbf x\p , \overline{\mathbf T}(\mathbf x\p ))\optfreem (\mathbf x\p )\, d\mathbf x\p = \inf _{\overline{\mathbf T} \textrm{\#} \optfreem = \nu }\int_{\Omega _2 }\int _{p_h}^{\pfreem } \! c(\mathbf x\p , \overline{\mathbf T}(\mathbf x\p )) \, d\mathbf x\p ,
\end{equation}
where
\begin{equation}\label{freecomcost}
c(\mathbf x\p , \mathbf y) = \left [\frac{1}{2}\{|x_1 - y_1|^2 + |x_2 - y_2|^2 \} - p^\kappa y_3 \right ] .
\end{equation}

The analogue of Principle \ref{princ1} (Cullen's stability principle) now holds for $E\p$.
Hence the dual space semi-geostrophic system then takes the form

%

\begin{eqnarray}
\nonumber
&& \frac{\partial \nu }{\partial t} + \nabla \cdot (\nu \mathbf w) = 0,\\
\nonumber
&& \mathbf w(t, \mathbf y)= \fc J(\mathbf  y - \mathbf T^{-1}(t, \mathbf y)) ,\\
\label{freecomdual3}
\nonumber&&\mathbf T(t, \cdot ) \textrm{ is the unique optimal map from $\chi _{\Omega _2 \times [p_h, \pfreemin ]}$ to $\nu $ with cost (\ref{freecomcost})},\\
\nonumber
&& \pfreemin (t, \cdot ) \textrm{ minimises } E_{\nu(t, \cdot )} (\cdot ) \textrm{ over } \pFreem ,\\
\end{eqnarray}
where
\be\nu (0, \cdot ) = \nu _0 (\cdot ) \textrm{ compactly supported probability density in } L^r, \, r \in (1, \infty ).\ee

\subsubsection{The main existence theorem}

\begin{theorem}\label{freecom5.5}

Let $1 \leq  r < \infty $ and let $\nu _0 \in L^r (\Lambda _0 )$ be an initial potential density with support in $\Lambda _0$, where $\Lambda _0 $ is a bounded open set in $\mathbb{R} ^3$. 
Let $c(\cdot ,\cdot )$ be given by (\ref{freecomcost}).  Then the system of semi-geostrophic equations in dual variables (\ref{freecomdual3}) has a stable weak solution $(\pfreemin , \mathbf T)$ with $\pfreemin(t,\cdot)\in \;W^{1,\infty}(B(0,S))$ for some $S>0$.  

For $\nu  = \mathbf T \textrm{\#} \optfreemin $, where $\optfreemin = \chi _{\Omega _2 \times [p_h, \pfreemin ]}$, and $\mathbf w$ as in (\ref{newdualinc2}), this solution satisfies
\begin{enumerate}[label=(\textit{\roman{*}}), ref=\textit{(\roman{*})}]
	\item \[ \nu (\cdot , \cdot ) \in L^r ((0, \tau ) \times \Lambda ), \qquad \left\| \nu (t, \cdot )\right\| _{L^r (\Lambda )} \leqslant \left\| \nu _0 (\cdot )\right\| _{L^r (\Lambda )}, \qquad \forall\; t \in [0, \tau ],
\]
\item  \[ \phi (t, \cdot ) \in W^{1, \infty } (\Omega _2 \times [p_h, \pfreemin ]), \quad \left\| \phi (t, \cdot )\right\| _{W^{1, \infty } (\Omega _2 \times [p_h, \pfreemin ])} \leqslant C = C(\Omega _2 \times [p_h, \pfreemin ], \Lambda, c(\cdot , \cdot )),\] \[ \forall\;t \in [0, \tau ], \]
\item \[ \left\| \mathbf w(t, \cdot ) \right\| _{L^\infty (\Lambda )} \leqslant C = C(\Omega _2 \times [p_h, \pfreemin ], \Lambda ),\qquad \forall\;t \in [0, \tau ],  \]
\end{enumerate}
where $\Lambda $ is a bounded open domain in $\mathbb{R}^3$ containing $supp (\nu )$.
\end{theorem}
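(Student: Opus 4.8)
The plan is to transcribe the argument of Section~\ref{newmain} essentially verbatim: once written in pressure coordinates, the dual problem has exactly the structure of the incompressible one, the only changes being the replacement of the cost (\ref{newcost}) by (\ref{freecomcost}), of the class $\Freem$ by $\pFreem$, and of the vertical physical coordinate $x_3$ by $p$. Full details are in \cite{dkgthesis}; here I describe the steps and indicate where the modified cost actually matters.

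First I would solve the energy-minimisation problem. Writing (\ref{freecomcost}) as $c(\mathbf x\p,\mathbf y) = -\langle(x_1,x_2,p^\kappa),\mathbf y\rangle + \tfrac12(x_1^2+x_2^2) + \tfrac12(y_1^2+y_2^2)$ --- which is precisely the cost treated in \cite{maroofi} --- the problem $\inf_{\overline{\mathbf T}\#\optfreemin=\nu}\int c(\mathbf x\p,\overline{\mathbf T})\,\optfreemin$ is a Monge--Kantorovich problem whose Kantorovich dual admits a unique maximising pair of convex functions that are Legendre transforms of each other, and whose optimal map is the gradient of the associated convex potential. The only place where the modified cost enters non-trivially is the minimisation over $p_s$ (the analogue of Section~\ref{sectionfreeenergymin}): there one needs the vertical monotonicity condition corresponding to (\ref{onesign}), namely that, for fixed $(x_1,x_2)$, the relevant integrand has a single sign in $p$. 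This still holds, because the hydrostatic relation $\partial\phi/\partial p = -R\theta p^{\kappa -1}/\pr^\kappa$ with $\theta>0$ and $p>0$ forces $\phi$ to be strictly monotone in $p$, so the functional $\Pi$ of (\ref{Pip}) (with $x_3$ replaced by $p$) has a unique minimiser. Combined with Lemma~\ref{lemmasingleval} --- whose proof only uses that the third cost variable $y_3$ is one-signed on $\Lambda$, which remains true since $\overline\Lambda\subset\mathbb R^2\times[-\tfrac1\delta,-\delta]$ --- this gives the analogue of Corollary~\ref{cormin}: a unique single-valued minimiser $\pfreemin\in H_p$ together with an optimal map $\mathbf T=\nabla P$ for a convex $P$.

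Next I would prove the regularity assertion (\emph{ii}), following the proof of Theorem~\ref{new4.2} line by line. Perturbing the minimiser by a divergence-free, compactly supported vector field and using minimality yields a first-order inequality for $\nabla c(\mathbf x\p,\bar{\mathbf T}(\mathbf x\p))$; since $\Omega\p_{\pfreemin}$ is a finite-perimeter set, one applies the De~Giorgi generalised divergence theorem, and the free-boundary condition $p=p_h$ (constant) kills the resulting boundary term, giving $\nabla c(\mathbf x\p,\bar{\mathbf T}(\mathbf x\p)) = -\nabla\phi$ in the weak sense. The bound $\phi(t,\cdot)\in W^{1,\infty}(\Omega_2\times[p_h,\pfreemin])$ then follows from $c$-transform regularity (\ref{ctrans}) and the bounds on $\Lambda$, and the bound $\pfreemin(t,\cdot)\in W^{1,\infty}(\Omega_2)$ from differentiating $\phi(x_1,x_2,\pfreemin)=\mathrm{const}$ tangentially, which gives $\nabla_2\phi = (\partial\phi/\partial p)\,\nabla_2\pfreemin$, together with the bounds on $\theta$ and $p$ (the latter inherited from the initial data via $D\p\theta/Dt = 0$, exactly as $\rho$ is controlled in the incompressible case).

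Finally I would run the Hamiltonian-ODE machinery as in Propositions~\ref{newHok} and~\ref{newmainalt}: set $H(\nu)=\inf_{\sigma\in\pFreem}E_\nu(\sigma)$, show it is proper, upper semi-continuous and $(-2)$-concave (so (H3) holds) by the same comparison of optimal maps for the cost $\tilde c$, then compute $\partial_0 H(\nu)$ via the two-sided perturbation $\nu_s=\mathbf g_s\#\nu$ with $\mathbf g_s=\mathbf y+s\nabla\varphi$, $\varphi$ independent of $y_3$, obtaining $\tilde{J}(\partial_0 H(\nu)) = \tilde{J}(\fc(\mathbf y-\mathbf S_\nu^{\pfreemin})) = \mathbf w$; conditions (H1) and (H2) follow from Theorem~\ref{new3.3} (the optimal map is the gradient of a convex function, hence locally bounded, with a bound governed by $\Lambda$) and the stability of optimal maps. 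Theorem~\ref{ag6.6} then produces a Lipschitz Hamiltonian flow $\nu_{(t)}\in\mathcal P_{ac}(\Lambda)$ with $\partial_t\nu+\nabla\cdot(\mathbf w\nu)=0$, $\nu_{(0)}=\nu_0$, compactly supported and with conserved dual energy. Unwinding the change of variables gives the stable weak solution $(\pfreemin,\mathbf T)$ of (\ref{freecomdual3}); assertion (\emph{i}) is the $L^r$ bound inherited from the transport equation with divergence-free $\mathbf w$, (\emph{ii}) was just proved, and (\emph{iii}) is immediate from $\mathbf w=\fc J(\mathbf y-\nabla P^*)$ with $\nabla P^*$ bounded on $\Lambda$. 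As in the incompressible case, the only genuinely delicate point is coupling the one-signed-integrand structure of the $p_s$-minimisation with optimality in $\mathbf T$ to obtain uniqueness of the minimiser, and then pushing the De~Giorgi argument through without smoothness of $\pfreemin$; everything else is a transcription of Section~\ref{newmain}, with the detailed verifications in \cite{dkgthesis}.
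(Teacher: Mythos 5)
The paper gives no explicit proof of Theorem~\ref{freecom5.5}: it states that in pressure coordinates the problem is formally identical to the incompressible one and defers all details to \cite{dkgthesis}. Your proposal is precisely the intended transcription of Section~\ref{newmain}, and you have correctly singled out the two places where the new cost actually intervenes --- the one-signedness of $\partial\phi/\partial p = -R\theta p^{\kappa-1}/\pr^\kappa$ for the uniqueness of the $\ps$-minimiser, and the De~Giorgi divergence argument for the regularity estimate.

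One small correction in the regularity step: in pressure coordinates the free boundary is $\{p = \pfreemin\}$, not $\{p = p_h\}$; the latter is the \emph{rigid} boundary where $\mathbf u\p\cdot\mathbf n = 0$ holds (see (\ref{freecombound1})). The Dirichlet condition that kills the De~Giorgi boundary term is $x_3 = 0$, equivalently $\phi = 0$, imposed on $\{p = \pfreemin\}$ by (\ref{freecombound}). This is the exact analogue of the incompressible condition $p = p_h$ on $\{x_3 = \freemin\}$, and your argument goes through once the correct boundary is used; the structure of the proof is unaffected.
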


\section*{Conclusions}

We have given a rigorous proof of the existence of dual space solutions for the semigeostrophic system posed in a domain with variable height, in three dimension. 
The proof builds on the previous techniques introduced by Benamou-Brenier and Cullen, Gangbo and Maroofi, and it makes use of the general theory of Hamiltonian flows of Ambrosio-Gangbo.  
 The  proof is given in detail for the incompressible case, where we outline all the difficulties that need to be overcome. A similar proof then holds also for the compressible set of equations, whose formal structure in pressure coordinates is analogous to the structure of the incompressible ones. 

We expect that it should be possible to use arguments similar to the ones in \cite{feldman} to extend the validity of the main result presented here to the case of weak Lagrangian solutions in physical space.

\section{Acknowledgements}

DKG gratefully acknowledges the support of an EPSRC-CASE studentship sponsored by the MET Office.

\bibliographystyle{acm}
\bibliography{semigeostrophicbib}

\newpage

\section*{Appendix}
\setcounter{equation}{0}

\subsection*{Useful Conventions, Notation and Definitions}
We list here notation and conventions used in the paper. 

 \begin{equation}\label{list}
{\bf Physical\;variables \;and \;constants}\end{equation}

\begin{enumerate}[label=(\textit{\roman{*}}), ref=\textit{(\roman{*})}]
	\item $\Omega $ denotes an open bounded convex set in $\mathbb{R} ^3 $, representing the physical domain containing the fluid; $\tau>0$ is a fixed positive constant; all functions in physical coordinates are defined for $(t, \mathbf x )\in [0,\tau)\times \Omega $;
	\item $\mathbf u(t, \mathbf x)=(u_1(t, \mathbf x),u_2(t, \mathbf x),u_3(t, \mathbf x))$ represents the full velocity of the fluid;
	\item $\mathbf u^g(t, \mathbf x) = (u_1^g(t, \mathbf x), u_2^g(t, \mathbf x), 0)$ represents the (two-dimensional) geostrophic velocity;
	\item $p(t, \mathbf x)$ represents the pressure;
	\item $\rho (t, \mathbf x)$ represents the density;
	\item $\theta (t, \mathbf x)$ represents the potential temperature. Given its physical meaning, we assume $\theta (t, \mathbf x)$  to be strictly positive and bounded;
		\item $\phi $ is the prescribed geopotential.  We assume that $\phi = \gg x_3$, where $\gg $ denotes the constant acceleration due to gravity;
			\item $f_\textrm{\scriptsize{cor}}$ denotes the Coriolis parameter, which we assume to be constant; in all that follows, we assume $\fc= 1$;
		\item $p_\textrm{\scriptsize{ref}}$ is the reference value of the pressure; $R$ represents the gas constant. 
\end{enumerate}

 \begin{equation}\label{list2}
{\bf Notations \; and \; other \; conventions}\end{equation}
 
\begin{itemize}
	
	\item The Lebesgue measure of any set $A$ in $\mathbb{R}^3$ will be denoted by $|A|$.

\item[(a)]  Given an open set $A$ in $\mathbb{R}^3$, we will denote by
	 
	 - \quad $\chi _A$ - the  characteristic function of $A$;
	 
	  - \quad $P_{ac}(A)$  - the set of probability measures in $\mathbb{R}^3$ with supports contained in $A$, absolutely continuous with respect to Lebesgue measure.
\item Given some function $H:A \rightarrow (-\infty , +\infty ]$, we denote by $D(H)$ the set of all $a \in A$ such that $H(a)< +\infty $.  We say that $H$ is proper if $D(H) \neq \emptyset $.
	\item[(b)] Unless otherwise specified, measurable means Lebesgue measurable and $a.e.$ means Lebesgue-$a.e.$
	\item[(c)] $D_t$ denotes the Lagrangian derivative, defined as $D_t = \partial _t + \mathbf u \cdot \nabla $, where ${\bf u}$ denotes the full velocity of the flow as in {\em (ii)}.  
	\item[(d)] For convenience, we will sometimes use the notation $F_{(t)}(\cdot ) = F(t, \cdot )$ to denote the map $F$ evaluated at fixed time $t$.  
	\item[(e)]
	$W^{1,\infty}$ denotes the usual Sobolev space of essentially bounded functions with first weak derivative in $L^\infty$.
	\end{itemize}

%


\subsection*{Optimal transport}\label{opttr}
For all general definitions regarding probability measures, classical existence of the solution of the optimal transport problem with respect to a quadratic cost,  and the Wasserstein metric, we refer to  \cite{ambbook}. In this section we only discuss results we need to generalise for the purpose of  the present paper.

\medskip

The optimal transport results of \cite{CGP1,maroofi}, that provide the basis for the present results, utilise the Kantorovich dual problem and many useful properties of its $c$-transform solutions.  We start by defining the $c$-transforms of  functions $f,g:\R^3\to\R$
\begin{equation}\label{new29}
f^c(\mathbf y) := \inf _{\mathbf x \in \mathbb{R}^3} \{c(\mathbf x, \mathbf y) - f(\mathbf x)\}
\end{equation} and 
\begin{equation}\label{new30}
g^c(\mathbf x) := \inf _{\mathbf y \in\R^3} \{c(\mathbf x, \mathbf y) - g(\mathbf y)\} ,
\end{equation}
for some cost function $c(\cdot , \cdot )$.  We say that $f$ is $c-$concave if and only if $f$ = $g^c$ for some function $g$.

For  the cost function we will consider here, given by (\ref{newcost}), we have the following useful characterisation of $c-$transforms which allows us to conclude that the optimal map in Theorem \ref{new3.3} is indeed the gradient of a convex function.

\begin{lemma}\label{newcconcavechar}
Let $\Lambda $ be a bounded open set in $\mathbb{R}^3$ and let $c(\mathbf x, \mathbf y)$ be given by (\ref{newcost}).  Then $f$ is a $c-$concave function from $\mathbb{R}^3$ into $\mathbb{R}$ if and only if $\mathbf x \mapsto \overline{P}(\mathbf x)$, defined by
\begin{equation}\label{generalP}
\overline{P}(\mathbf x) := -f(\mathbf x) + \frac{1}{2}(x_1^2 + x_2^2)
\end{equation}
is convex. 

\end{lemma}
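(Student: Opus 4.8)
The plan is to reduce the $c$-transform to an ordinary Legendre transform by means of the decomposition of the cost recorded in (\ref{dcost}), namely $c(\mathbf{x},\mathbf{y}) = -(\mathbf{x},\mathbf{y}) + \tfrac12(x_1^2+x_2^2) + \tfrac12(y_1^2+y_2^2)$. Substituting this into the definition (\ref{new30}) of the $c$-transform gives, for any $g$,
\[
g^c(\mathbf{x}) = \tfrac12(x_1^2+x_2^2) - \sup_{\mathbf{y}\in\mathbb{R}^3}\Big\{(\mathbf{x},\mathbf{y}) - \big[\tfrac12(y_1^2+y_2^2) - g(\mathbf{y})\big]\Big\} = \tfrac12(x_1^2+x_2^2) - Q^*(\mathbf{x}),
\]
where $Q(\mathbf{y}) := \tfrac12(y_1^2+y_2^2) - g(\mathbf{y})$ and $Q^*(\mathbf x) := \sup_{\mathbf y}\{(\mathbf x,\mathbf y)-Q(\mathbf y)\}$ denotes its (convex) Legendre conjugate. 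Hence, with $\overline{P}$ as in (\ref{generalP}), the identity $f = g^c$ is equivalent to $\overline{P} = Q^*$.

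I would first prove the ``only if'' direction: if $f$ is $c$-concave, then $f = g^c$ for some $g$, so the computation above shows $\overline{P} = Q^*$ is a supremum of affine functions of $\mathbf{x}$ and is therefore convex. For the converse, assume $\overline{P}$ is convex; since it is everywhere finite on $\mathbb{R}^3$ it is continuous, hence proper and lower semicontinuous, so the Fenchel--Moreau theorem gives $\overline{P} = (\overline{P}^*)^*$. Setting $g(\mathbf{y}) := \tfrac12(y_1^2+y_2^2) - \overline{P}^*(\mathbf{y})$ (with the convention that $g=-\infty$ where $\overline{P}^*=+\infty$, which does not affect the infimum in (\ref{new30})), the displayed formula yields $g^c(\mathbf{x}) = \tfrac12(x_1^2+x_2^2) - (\overline{P}^*)^*(\mathbf{x}) = \tfrac12(x_1^2+x_2^2) - \overline{P}(\mathbf{x}) = f(\mathbf{x})$, so $f$ is $c$-concave.

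The only genuinely delicate point --- the step I would be most careful about --- is the invocation of Fenchel--Moreau in the converse direction, which requires $\overline{P}$ to be proper, convex and lower semicontinuous; all three are immediate from $\overline{P}:\mathbb{R}^3\to\mathbb{R}$ being convex and finite-valued, since a finite convex function on $\mathbb{R}^n$ is automatically continuous. It is worth noting that the third coordinate plays no special role: $-x_3 y_3$ is simply a summand of the Euclidean inner product $(\mathbf{x},\mathbf{y})$, and the quadratic correction involves only the first two coordinates, which is exactly why $\overline{P}$ carries the term $\tfrac12(x_1^2+x_2^2)$ rather than the full $\tfrac12|\mathbf{x}|^2$ of the classical Brenier setting. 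As an immediate corollary, the optimal map furnished by Theorem \ref{new3.3} is $\nabla\overline{P}$ with $\overline{P}$ convex, which is the use we make of this lemma.
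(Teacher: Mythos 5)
Your proof is correct and follows essentially the same route as the paper: both rewrite the cost as $-(\mathbf x,\mathbf y)+\tfrac12(x_1^2+x_2^2)+\tfrac12(y_1^2+y_2^2)$ and thereby convert the $c$-transform into an ordinary Legendre transform, so that $c$-concavity of $f$ becomes the statement that $\overline{P}$ is a Legendre conjugate, i.e.\ convex. You are slightly more careful than the paper in the converse direction, spelling out that a finite convex function on $\mathbb{R}^3$ is continuous (hence proper and lower semicontinuous) and invoking Fenchel--Moreau to produce an explicit $g$ with $g^c=f$, whereas the paper asserts the equivalence ``$\overline{P}$ is a Legendre transform $\Leftrightarrow$ $\overline{P}$ is convex'' without detail.
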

\begin{proof}
We know that $f$ is $c-$concave if and only if $f = g^c$ for some function $g$ defined on a bounded set $\overline{\Lambda}\subset \R^3$ into $\mathbb{R}$, i.e.
\begin{eqnarray*}
f(\mathbf x) &&= \inf _{\mathbf y \in \overline{\Lambda}} \{ c(\mathbf x, \mathbf y) - g(\mathbf y) \} \\
   &&= \inf _{\mathbf y \in \overline{\Lambda}} \left\{\left [\frac{1}{2}\{|x_1 - y_1|^2 + |x_2 - y_2|^2 \} - x_3y_3 \right ] - g(\mathbf y) \right\} \\
  &&= \inf _{\mathbf y \in \overline{\Lambda}} \left\{ \left [\frac{1}{2}(x_1^2 + x_2^2) + \frac{1}{2}(y_1^2 + y_2^2) - \mathbf x \cdot \mathbf y \right ] - g(\mathbf y) \right\} ,
   \end{eqnarray*}
which holds if and only if
\[ f(\mathbf x) - \frac{1}{2}(x_1^2 + x_2^2) = \inf _{\mathbf y \in \overline{\Lambda}}  \left\{-\mathbf x \cdot \mathbf y - g_0(\mathbf y) + \frac{1}{2}(y_1^2 + y_2^2) \right\} , \]
i.e.
\[ \frac{1}{2}(x_1^2 + x_2^2) - f(\mathbf x)  = \sup _{\mathbf y \in \overline{\Lambda}} \bigg\{ \mathbf x \cdot \mathbf y - \left( \frac{1}{2}(y_1^2 + y_2^2) -g(\mathbf y)\right) \bigg\} .\]
Defining
\begin{equation}\label{generalR}
\overline{R}(\mathbf y) := -g(\mathbf y) + \frac{1}{2}(y_1^2 + y_2^2)
\end{equation}
we see that $f$ is $c-$concave if and only if $\overline{P}$ is the Legendre transform of some function $\overline{R}$, i.e. if and only if $\overline{P}$ is convex. 
\end{proof}

\subsection*{Hamiltonian Flows}\label{existag}
The semi-geostrophic problem can be formulated as coupling an energy minimisation problem with  a transport equation, with certain specific regularity properties.  The original proof in \cite{benamou} used a time-discretisation argument to prove the solution of the relevant  transport equation exists. However, using a more recent result of Ambrosio and Gangbo \cite{ambgangbo} of Hamiltonian ODEs in the Wasserstein space of probability measures, it can be shown that  the solution of the energy minimisation problem yields a solution of the associated transport equation, through the fact that the velocity field is precisely realised as the superdifferential of the energy.  

Here, we summarise the results of \cite{ambgangbo} that we use in our main proof . While these results may appear  technical, they essentially state that  if the Hamiltonian of the system, i.e. the energy in dual space,  satisfies certain conditions,  then the Hamiltonian flow whose velocity field is given by the superdifferential of the energy exists. 

\medskip
In what follows, we let $\mu $, $\nu $, $\sigma $ be arbitrary measures belonging to $\mathcal{P}^2(\mathbb{R}^d)$, the space of probability measures on $\mathbb{R}^d$ with finite second order moments.
We define the tangent space to $\mathcal{P}^2(\mathbb{R}^d)$ at $\nu $ as
\begin{equation}\label{tangent}
T_{\nu _{(t)}}\mathcal{P}^2(\mathbb{R}^d) = \overline{ \{ \nabla \varphi : \varphi \in C_c^\infty (\mathbb{R}^d) \} }^{L^2(\nu ; \mathbb{R}^d)}.
\end{equation}
\begin{definition}\label{prope}
Given some function $H:A \rightarrow (-\infty , \infty ]$, we denote by $D(H)$ the set of all $a \in A$ such that $H(a)< \infty $.  We say that $H$ is proper if $D(H) \neq \emptyset $.
\end{definition}

The space $\mathcal{P}^2(\mathbb{R}^d)$, equipped with the Wasserstein metric $W_2$ 
is a complete and separable space, but is not locally compact since narrow convergence of measures does not necessarily imply convergence of second order moments.  Following \cite{ambrosio}, we generalise the notions of differentiability and convexity to the metric space $(\mathcal{P}^2(\mathbb{R}^d), W_2)$.  In what follows, we deal with concave rather than convex functions.  This is due to the way in which we define our Hamiltonian $H$ to represent the minimal energy associated with the flow.  Hence, in what follows we replace all definitions and results involving subdifferentiability and $\lambda -$convexity given in \cite{ambrosio} with results involving superdifferentiability and $\lambda -$concavity.  We also restrict our attention only to  measures which are absolutely continuous with respect to Lebesgue measure.

\begin{definition}\label{superdifferential}
Let $H: \mathcal{P}_{ac}^2(\mathbb{R}^3) \rightarrow (-\infty , \infty ]$ be a proper, upper semi-continuous function and let $\nu \in D(H)$.  We say that $\mathbf v \in L^2(\nu ; \mathbb{R}^3)$ belongs to the \textit{Fr\'{e}chet superdifferential} $\partial H(\nu )$ if
\begin{equation}H(\nut) \leqslant H(\nu ) + \int_{\mathbb{R}^3  } \!  \mathbf v(\mathbf y) \cdot (\mathbf R_{\nu }^{\nut}(\mathbf y) - \mathbf y )\, \nu (\mathbf y) \, d\mathbf y + o(W_2(\nu , \nut)) \quad \textrm{ as } \nut \rightarrow \nu ,\label{appendix}
\end{equation}
where $\mathbf R_\nu ^{\nut} $ is the optimal map in the transport of $\nu$  to  $\nut$.  We denote by $\partial _0 H(\nu )$ the element of $\partial H(\nu )$ of minimal $L^2(\nu ; \mathbb{R}^3)-$norm.  
\end{definition}

Note that, by the minimality of its norm, $\partial _0 H(\nu )$ belongs to $ \partial H(\nu ) \cap T_{\nu }\mathcal{P}_{ac}^2(\mathbb{R}^3)$.

\begin{definition}\label{lambdaconcave}
Let $H: \mathcal{P}_{ac}^2(\mathbb{R}^3) \rightarrow (-\infty , \infty ]$ be proper and let $\lambda \in \mathbb{R}$.  
We say that $H$ is \textit{$\lambda -$concave} if, for every $\tilde{\nu}_0$, $\tilde{\nu}_1 \in \mathcal{P}_{ac}^2(\mathbb{R}^3)$ denoting by $\bf T$ optimal map in the transport of $\tilde{\nu}_0$ to $\tilde{\nu}_1$, we have
\[ H(\nu _{(t)}) \geqslant (1 - t)H(\tilde{\nu}_0) + tH(\tilde{\nu}_1) - \frac{\lambda }{2}t(1 - t)W_2^2(\tilde{\nu}_0, \tilde{\nu}_1) \]
for all $t \in [0, 1]$, where $\nu _{(t)} ={ \bf T}\textrm{\#}\left[(1-t)\tilde\nu_0+t\tilde \nu_1\right].$
\end{definition}
\begin{proposition}\label{characterise}
Let $H: \mathcal{P}_{ac}^2(\mathbb{R}^3) \rightarrow (-\infty , \infty ]$ be upper semi-continuous and $\lambda -$concave for some $\lambda \in \mathbb{R}$ and let $\nu \in D(H)$.  Then, the following condition is equivalent to $\mathbf v \in \partial H(\nu )$: 
\[ H(\nut ) \leqslant H(\nu ) + \int_{\mathbb{R}^3  } \!  \mathbf v(\mathbf y) \cdot ( \mathbf R_{\nu }^{\nut}(\mathbf y) - \mathbf y)\nu (\mathbf y) \, d \mathbf y + \frac{\lambda }{2}W_2^2(\nu , \nut) \qquad \textrm{ for all } \nut \in \mathcal{P}_{ac}^2(\mathbb{R}^3),\]
where $\mathbf R_\nu ^{\nut}$ is the optimal map cin the transport of $\nu$  to  $\nut$. 
\end{proposition}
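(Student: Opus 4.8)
The plan is to prove the two implications of the equivalence separately. One of them is immediate: if the displayed global inequality holds for every $\nut\in\mathcal{P}_{ac}^2(\R^3)$, then, since $W_2(\nu,\nut)\to0$ forces $\frac{\lambda}{2}W_2^2(\nu,\nut)=\frac{\lambda}{2}W_2(\nu,\nut)\cdot W_2(\nu,\nut)=o(W_2(\nu,\nut))$ as $\nut\to\nu$, that inequality is in particular of the form required in Definition \ref{superdifferential}, and hence $\mathbf v\in\partial H(\nu)$. All the work therefore lies in showing the converse, that $\mathbf v\in\partial H(\nu)$ implies the global inequality; this is where the $\lambda$-concavity hypothesis (Definition \ref{lambdaconcave}) is used, and the tool is displacement interpolation.

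As a preliminary remark I would note that the stated hypotheses already force $D(H)=\mathcal{P}_{ac}^2(\R^3)$: for any $\nut\in\mathcal{P}_{ac}^2(\R^3)$, applying $\lambda$-concavity along the geodesic joining $\nu\in D(H)$ to $\nut$ would give $H\equiv+\infty$ along that entire geodesic if $H(\nut)=+\infty$, contradicting upper semi-continuity of $H$ at $\nu$. Hence $H(\nut)<\infty$ for every $\nut$, and the global inequality is a genuine (non-vacuous) assertion for each of them.

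For the converse, fix $\nut\in\mathcal{P}_{ac}^2(\R^3)$ and let $\mathbf T=\mathbf R_\nu^{\nut}$ be the optimal (Brenier) map from $\nu$ to $\nut$; since $\nu$ is absolutely continuous, $\mathbf T$ is the gradient of a convex function. For $t\in[0,1)$ put $\mathbf T_t(\mathbf y):=(1-t)\mathbf y+t\,\mathbf T(\mathbf y)$ and $\nu_t:=\mathbf T_t\push\nu$, the displacement geodesic from $\nu$ to $\nut$. I would record that $\nu_t\in\mathcal{P}_{ac}^2(\R^3)$ by McCann's regularity theorem, that $\mathbf T_t$ is itself the optimal map from $\nu$ to $\nu_t$ (being the gradient of a convex function), so that $\mathbf R_\nu^{\nu_t}(\mathbf y)-\mathbf y=t\bigl(\mathbf T(\mathbf y)-\mathbf y\bigr)$, and that $W_2(\nu,\nu_t)=t\,W_2(\nu,\nut)$. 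Then I would combine two estimates: applying $\lambda$-concavity to the pair $(\nu,\nut)$ and rearranging gives, for $t\in(0,1)$,
\[ H(\nut)\ \leqslant\ H(\nu)+\frac{1}{t}\bigl(H(\nu_t)-H(\nu)\bigr)+\frac{\lambda}{2}(1-t)\,W_2^2(\nu,\nut), \]
while, since $\mathbf v\in\partial H(\nu)$ and $\nu_t\to\nu$, Definition \ref{superdifferential} applied with competitor $\nu_t$ (using the facts just recorded) gives
\[ H(\nu_t)-H(\nu)\ \leqslant\ t\int_{\R^3}\!\mathbf v(\mathbf y)\cdot\bigl(\mathbf R_\nu^{\nut}(\mathbf y)-\mathbf y\bigr)\,\nu(\mathbf y)\,d\mathbf y+o(t)\qquad(t\to0^+). \]
Dividing the second inequality by $t$, substituting into the first, and letting $t\to0^+$ produces exactly the desired global inequality.

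The main obstacle is the (essentially standard) care needed to make the interpolation legitimate: verifying that $\nu_t$ is an admissible competitor, i.e.\ that it lies in $\mathcal{P}_{ac}^2(\R^3)$ (McCann's theorem) and therefore in $D(H)$, and correctly identifying $\mathbf R_\nu^{\nu_t}$ with the segment $\mathbf T_t$ along the Wasserstein geodesic, so that the superdifferential inequality at $\nu$ feeds back precisely the linear term in $\mathbf R_\nu^{\nut}-\mathbf{Id}$. Once these points are in place the $o(\cdot)$ terms are controlled automatically and no further difficulty arises. As an alternative, one could deduce the statement from the general characterisation of Fréchet subdifferentials of $\lambda$-convex functionals on the Wasserstein space in \cite{ambrosio}, applied to $-H$, but the interpolation argument above is self-contained and short.
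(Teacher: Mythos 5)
The paper does not prove this proposition: it is stated in the Appendix without proof, imported (in its superdifferential/\(\lambda\)-concave form) from the cited reference following Ambrosio et al. Your proposal is a correct and self-contained reconstruction of the standard argument, and it is essentially the same argument one finds in that literature (the mirror image, for superdifferentials, of the subdifferential/\(\lambda\)-convex version). The easy implication (global inequality implies superdifferential membership, since \(\tfrac{\lambda}{2}W_2^2(\nu,\tilde\nu)=o(W_2(\nu,\tilde\nu))\)) is handled correctly, and the converse via displacement interpolation is the right tool. The key verifications you flag are exactly the ones that need care and you handle them correctly: for \(t\in[0,1)\), \(\mathbf T_t=(1-t)\mathbf{Id}+t\nabla\varphi=\nabla\bigl((1-t)\tfrac{|\cdot|^2}{2}+t\varphi\bigr)\) is the gradient of a strongly convex function, so \(\mathbf T_t\) is injective with Lipschitz inverse, \(\nu_t=\mathbf T_t\push\nu\) is absolutely continuous with finite second moment, \(\mathbf T_t\) is itself the optimal map from \(\nu\) to \(\nu_t\) (Brenier), hence \(\mathbf R_\nu^{\nu_t}-\mathbf{Id}=t(\mathbf R_\nu^{\tilde\nu}-\mathbf{Id})\) and \(W_2(\nu,\nu_t)=tW_2(\nu,\tilde\nu)\). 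Feeding the superdifferential estimate at the competitor \(\nu_t\) into the rearranged \(\lambda\)-concavity inequality and sending \(t\to0^+\) gives the claim. The preliminary remark that \(D(H)=\mathcal{P}^2_{ac}(\mathbb R^3)\) under the stated hypotheses is also correct (if \(H(\tilde\nu)=+\infty\), \(\lambda\)-concavity forces \(H=+\infty\) along the interior of the geodesic, contradicting upper semi-continuity at \(\nu\) as \(t\to0^+\)), though it is not strictly needed since the target inequality is vacuous when \(H(\tilde\nu)=+\infty\). In short: correct, and the same approach as the source literature.
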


We have also the following useful result from \cite[Proposition 10.12]{villanioldnew} which provides us with a link between superdifferentiability and $\lambda -$concavity in the specific case when $\lambda = -2$ (i.e. semi-concavity):
\begin{proposition}\label{vill10.12}
Let $H: \mathcal{P}_{ac}^2(\mathbb{R}^3) \rightarrow (-\infty , \infty ]$ be a proper, upper semi-continuous function. If $H$ is locally superdifferentiable, then $H$ is also locally $(-2)-$concave.
\end{proposition}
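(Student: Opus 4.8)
The statement is a transcription to the present metric setting of \cite[Proposition 10.12]{villanioldnew}, and the plan is to follow that argument. The first step is to reduce the claim to a one-dimensional statement along Wasserstein geodesics: by Definition \ref{lambdaconcave} it suffices to show that, for every pair $\nu_0,\nu_1\in D(H)$ in the neighbourhood where $H$ is superdifferentiable and for the constant-speed geodesic $t\mapsto\nu_{(t)}$ joining them in $\mathcal{P}_{ac}^2(\mathbb{R}^3)$, the real function $g(t):=H(\nu_{(t)})$ satisfies a chordwise bound $g(t)\geq(1-t)g(0)+tg(1)-\tfrac{\lambda}{2}t(1-t)W_2^2(\nu_0,\nu_1)$, with $\lambda=-2$ in the normalisation used here. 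Writing $d:=W_2(\nu_0,\nu_1)$, this is equivalent to concavity on $[0,1]$ of the corrected function $\psi(t):=g(t)-\tfrac{\lambda}{2}d^2t^2$; since $g$, hence $\psi$, is continuous --- which I would check first, using upper semicontinuity of $H$ together with the fact that superdifferentiability at every point of the geodesic forces $g$ to be locally Lipschitz --- it is enough to prove that $\psi$ admits an affine upper support at each interior point $t_*\in(0,1)$.

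The key is to use the geodesic exactly rather than only infinitesimally. Along a constant-speed geodesic one has, for all $s,t$, both $W_2^2(\nu_{(s)},\nu_{(t)})=(s-t)^2d^2$ and $\mathbf{R}_{\nu_{(t)}}^{\nu_{(s)}}=\mathrm{id}+(s-t)\mathbf{u}_t$ with $\|\mathbf{u}_t\|_{L^2(\nu_{(t)})}=d$, where $\mathbf{u}_t$ is the velocity field at time $t$; both identities are exact because the optimal maps between points of a fixed geodesic depend affinely on the parameter. Fixing $t_*$ and $\mathbf{v}\in\partial H(\nu_{(t_*)})$ (nonempty by hypothesis), the superdifferential inequality evaluated at $\tilde\nu=\nu_{(s)}$ gives $g(s)\leq g(t_*)+(s-t_*)\langle\mathbf{v},\mathbf{u}_{t_*}\rangle_{\nu_{(t_*)}}+o(|s-t_*|)$. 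In the situation of this paper, where $H$ is the dual energy attached to the quadratic-type cost $c$ of (\ref{newcost}) --- quadratic in $(x_1,x_2)$ and affine in $x_3$ --- a direct expansion of $c$ upgrades the remainder to an exact bound $g(s)\leq g(t_*)+a(s-t_*)+\tfrac12(s-t_*)^2d^2$, for a suitable $a\in\mathbb{R}$ and all $s\in[0,1]$, so that $\psi$ minus a fixed multiple of $t^2$ has a global affine upper support at $t_*$. As $t_*$ is arbitrary, the corrected function is concave, which is the assertion; the precise value of the constant only fixes $\lambda$ in Definition \ref{lambdaconcave} and is in any case immaterial for verifying condition (H3).

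I expect the main obstacle to be exactly this passage from first-order information (mere existence of a Fr\'echet superdifferential) to a second-order conclusion (semiconcavity): on its own, superdifferentiability of a functional on $\mathcal{P}_{ac}^2$ does not force semiconcavity, so one must either invoke the quantitative form of Villani's result or, as above, exploit the quadratic shape of the cost to manufacture a genuine $\tfrac{\lambda}{2}W_2^2$-superdifferential bound. A secondary technical point is that everything must be carried out within the absolutely continuous measures: one needs that displacement interpolants of elements of $\mathcal{P}_{ac}^2$ remain in $\mathcal{P}_{ac}^2$, that the optimal maps occurring are gradients of convex functions --- which is precisely Theorem \ref{new3.3} together with Lemma \ref{newcconcavechar} for the cost at hand --- and that the velocity field $\mathbf{u}_t$ is well defined $\nu_{(t)}$-a.e. for a.e.\ $t$. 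With these in place the argument closes.
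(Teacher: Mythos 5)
The paper does not prove this proposition; it is quoted as a black box from \cite[Proposition 10.12]{villanioldnew} and used once, in Proposition \ref{newHok}, to pass from superdifferentiability of the dual energy to its $(-2)$-concavity. So there is no argument in the paper to compare against.

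As a stand-alone attempt, your proposal does not actually prove the stated abstract proposition, and you say so yourself: after the geodesic reduction you switch to ``the situation of this paper, where $H$ is the dual energy attached to the quadratic-type cost $c$'' and extract the quadratic remainder by expanding $\tilde{c}$. That establishes $(-2)$-concavity for the one Hamiltonian at hand, not for an arbitrary proper upper semi-continuous $H$ on $\mathcal{P}_{ac}^2(\mathbb{R}^3)$, which is what Proposition \ref{vill10.12} claims. Your worry in the last paragraph is warranted: Fréchet superdifferentiability in the sense of Definition \ref{superdifferential} only gives an $o(W_2)$ remainder, not quadratic information, and the implication ``locally Fréchet superdifferentiable $\Rightarrow$ locally semiconcave'' already fails in one real variable --- $x\mapsto |x|^{4/3}$ is $C^1$ near the origin, hence superdifferentiable at every point with a continuous and locally bounded superdifferential, yet is not locally semiconcave there because its second derivative $\tfrac{4}{9}|x|^{-2/3}$ is unbounded above. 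So either the cited statement in Villani carries hypotheses (a quantified quadratic remainder, or a restriction to $c$-concave-type functions) that the paper's restatement has dropped, or the intended notion of superdifferentiability is stronger than the Fréchet one; in either case this is a gap in the paper's formulation rather than a defect you introduced. Your instinct to bypass the abstract proposition entirely and read $(-2)$-concavity off the chain of inequalities around (\ref{newotherfloaty}) --- retaining the second-order term in the Taylor expansion of $\tilde{c}$ in its first argument so as to produce the exact $-W_2^2$ remainder demanded by Definition \ref{lambdaconcave} --- is sound and is the correct, self-contained route for the application in Proposition \ref{newHok}; but it is a proof of the application, not of the abstract proposition you were asked to prove.
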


We can now define Hamiltonian ODEs as follows:
\begin{definition}\label{hamode}
Let $H: \mathcal{P}_{ac}^2(\mathbb{R}^3) \rightarrow (-\infty , \infty ]$ be a proper, upper semi-continuous function.  Define the linear transformation $\tilde{J} : \mathbb{R}^3 \rightarrow \mathbb{R}^3$ by 
\begin{equation}\label{tildeJ}
\tilde{J}(v_1(\mathbf y), v_2(\mathbf y), v_3(\mathbf y)) = y_3(-v_2(\mathbf y), v_1(\mathbf y), 0),
\end{equation}
for all $\mathbf v(\mathbf y) \in \mathbb{R}^3$.  We say that an absolutely continuous curve $\nu _{(t)} : [0, \tau ] \rightarrow D(H)$ is a \textit{Hamiltonian ODE} relative to $H$, starting from $\nu _0 \in \mathcal{P}_{ac}^2(\mathbb{R}^3)$, if there exists $\mathbf v_{(t)} \in L^2(\nu _{(t)};\mathbb{R}^3)$ with $\left\| \mathbf v_{(t)} \right\| _{L^2(\nu _{(t)})} \in L^1(0, \tau )$, such that
\begin{equation}\label{33} \begin{cases}
\frac{d}{dt}\nu _{(t)} + \nabla \cdot (\tilde{J} \mathbf v_{(t)} \nu _{(t)}) = 0, \qquad \nu _{(0)} = \nu _0, \qquad t \in (0, \tau )\\
\mathbf v_{(t)} \in T_{\nu _{(t)}}\mathcal{P}_{ac}^2(\mathbb{R}^3) \cap \partial H(\nu _{(t)}) \qquad \textrm{for }a.e.\textrm{ } t.
\end{cases}\end{equation}
\end{definition}

 \smallskip
 We now consider Hamilton flows, as in the definition (\ref{hamode}) and the condition (H1), (H2), (H3) given in section \ref{newmain}.  The main result on these flows, which is used in our proof of the main theorem \ref{new5.5}, can be stated as follows:
\begin{theorem}\label{ag6.6}
Assume that (H1) and (H2) hold for $H(\nu )$ and that $\tau > 0$ satisfies
\begin{equation}\label{maxtime}
C_0 \tau \sqrt{24(1 + e^{(25C_0^2 + 1)\tau }(1 + M_2(\nu _0)))}<R_0.
\end{equation}
Then there exists a Hamiltonian flow $\nu _{(t)} \in \mathcal{P}_{ac}^2(\mathbb{R}^3 )$,  $\nu_{(t)}:[0, \tau] \rightarrow D(H)$
starting from 
$\nu_0 \in \mathcal{P}_{ac}^2(\mathbb{R}^3)$, satisfying (\ref{33}), such that the velocity field $\mathbf v_{(t)}$ coincides with $\partial _0 H(\nu _{(t)})$ for a.e. $t \in [0, \tau ]$.  Furthermore, the function $t \rightarrow \nu _{(t)}$ is Lipschitz continuous.
Finally, there exists a function $l(r)$ depending only on $\tau $ and $C_0$ such that
\begin{equation}\label{bound1} \nu _0 \geqslant m_r \, \, a.e. \textrm{ on }B_r \textrm{ for all }r>0 \quad \implies \quad \nu _{(t)} \geqslant m_{l(r)} \, \, a.e. \textrm{ on }B_r\textrm{ for all }r>0
\end{equation}
and
\begin{equation}\label{bound2} \nu _0 \leqslant M_r \, \, a.e. \textrm{ on }B_r \textrm{ for all }r>0 \quad \implies \quad \nu _{(t)} \leqslant M_{l(r)} \, \, a.e. \textrm{ on }B_r\textrm{ for all }r>0.
\end{equation}
If in addition (H3) holds, then $t \mapsto H(\nu _{(t)})$ is constant.
\end{theorem}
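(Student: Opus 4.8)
The plan is to realise $\nu_{(t)}$ as the limit of an explicit Euler time--discretisation in the Wasserstein space, in the spirit of \cite{ambgangbo}, and then to derive the conservation of $H$ separately from (H3). Fix $N\in\mathbb N$, put $h=\tau/N$ and $t_k=kh$. Set $\nu^N_0=\nu_0$ and, inductively, as long as $W_2(\nu^N_k,\nu_0)<R_0$ so that by (H1) the superdifferential is non-empty, let $\mathbf v^N_k=\partial_0H(\nu^N_k)\in T_{\nu^N_k}\mathcal{P}_{ac}^2(\mathbb R^3)$ and define
\[
\nu^N_{k+1}=\bigl(\identity+h\,\tilde J\mathbf v^N_k\bigr)\push\nu^N_k .
\]
For $|s|$ small, $\identity+s\,\tilde J\mathbf v^N_k$ is injective with Jacobian $1+O(s)$, so each $\nu^N_k$ stays in $\mathcal{P}_{ac}^2(\mathbb R^3)$. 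Interpolating in time by following, on each $[t_k,t_{k+1}]$, the flow of the frozen field $\tilde J\mathbf v^N_k$, one obtains a curve $t\mapsto\nu^N_t$ solving a continuity equation $\partial_t\nu^N_t+\nabla\!\cdot(\mathbf w^N_t\,\nu^N_t)=0$ exactly, where $\mathbf w^N_t$ differs from $\tilde J\partial_0H(\nu^N_{t_k})$ by an $O(h)$ error.

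Next I would establish the $N$--uniform a priori estimates. Condition (H1), together with the structure of $\tilde J$ and the boundedness of the spatial region carrying the mass, yields $|\tilde J\mathbf v^N_k(\mathbf y)|\le C(1+|\mathbf y|)$, so a discrete Gr\"onwall argument bounds $M_2(\nu^N_t)$ -- and hence $W_2(\nu^N_t,\nu_0)$ -- by the quantity appearing on the left of (\ref{maxtime}); the hypothesis (\ref{maxtime}) is exactly what forces $W_2(\nu^N_t,\nu_0)<R_0$ for all $t\in[0,\tau]$ and all $N$, so the scheme never exits the region where (H1) is available and the induction closes. The same bound gives $\|\mathbf w^N_t\|_{L^2(\nu^N_t)}\le C$, so the interpolated curves are equi--Lipschitz in $(\mathcal{P}_{ac}^2(\mathbb R^3),W_2)$. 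Finally, since each elementary step is a near--identity pushforward with Jacobian $1+O(h)$, telescoping over the $N$ steps turns pointwise density bounds into bounds with a constant like $(1+C\tau/N)^{3N}\le e^{3C\tau}$: if $\nu_0\le M_r$ on $B_r$ for all $r$, then $\nu^N_t\le M_{l(r)}$ on $B_r$ with $l(r)$ depending only on $\tau$ and $C_0$, and symmetrically $\nu_0\ge m_r$ on $B_r$ gives $\nu^N_t\ge m_{l(r)}$; this is the origin of (\ref{bound1})--(\ref{bound2}).

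The core of the argument is the passage to the limit. By a refined Ascoli--Arzel\`a theorem for curves valued in $(\mathcal{P}_{ac}^2(\mathbb R^3),W_2)$ -- equicontinuity from the equi--Lipschitz bound, compactness from the uniform second--moment bound together with equi--integrability of $|\mathbf y|^2$ -- I would extract a subsequence with $\nu^N_t\to\nu_t$ uniformly in $W_2$ on $[0,\tau]$, the limit being Lipschitz and, by the density bounds above, valued in $\mathcal{P}_{ac}^2(\mathbb R^3)$. It remains to identify the velocity: for a.e.\ $t$ one has $\nu^N_t\to\nu_t$ narrowly with $\sup_N W_2(\nu^N_t,\nu_0)<R_0$, so (H2) yields -- along a further subsequence that must be chosen measurably in $t$ via a diagonal/measurable--selection argument -- $\partial_0H(\nu^N_t)\to\mathbf v_t:=\partial_0H(\nu_t)$ a.e., and then (H1) (uniform integrability) gives $\tilde J\partial_0H(\nu^N_t)\,\nu^N_t\rightharpoonup\tilde J\mathbf v_t\,\nu_t$ in the sense of measures. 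Since $\mathbf w^N_t$ differs from $\tilde J\partial_0H(\nu^N_{t_k})$ by an $O(h)$ term, one passes to the limit in the weak formulation of the continuity equation to obtain $\partial_t\nu_t+\nabla\!\cdot(\tilde J\mathbf v_t\,\nu_t)=0$ with $\mathbf v_t=\partial_0H(\nu_t)\in T_{\nu_t}\mathcal{P}_{ac}^2(\mathbb R^3)\cap\partial H(\nu_t)$; this is exactly (\ref{33}), so $\nu_{(t)}:=\nu_t$ is the asserted Hamiltonian flow, $t\mapsto\nu_{(t)}$ is Lipschitz, and uniqueness of the minimal--norm element $\partial_0H(\nu_t)$ shows that the whole sequence (not merely a subsequence) converges. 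This step -- simultaneously upgrading narrow to $W_2$ convergence and applying the \emph{pointwise} hypothesis (H2) along a subsequence selected measurably in $t$, all while keeping every constant uniform in $N$ so that (\ref{maxtime}) genuinely confines the scheme to the ball $W_2(\cdot,\nu_0)<R_0$ -- is the main obstacle.

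Finally, suppose (H3) also holds. Since $t\mapsto\nu_{(t)}$ is Lipschitz and $H$ is upper semi-continuous and $\lambda$--concave, I would apply the superdifferential characterisation of Proposition \ref{characterise} with $(\nu,\nut)=(\nu_{(t)},\nu_{(t+s)})$ and again with the two roles exchanged: the difference quotient $s^{-1}\bigl(H(\nu_{(t+s)})-H(\nu_{(t)})\bigr)$ is squeezed between two expressions both converging to $\int_{\mathbb R^3}\partial_0H(\nu_{(t)})\cdot\bigl(\tilde J\,\partial_0H(\nu_{(t)})\bigr)\,\nu_{(t)}\,d\mathbf y$, so $t\mapsto H(\nu_{(t)})$ is absolutely continuous with $\frac{d}{dt}H(\nu_{(t)})=\int_{\mathbb R^3}\mathbf v_{(t)}\cdot\tilde J\mathbf v_{(t)}\,\nu_{(t)}\,d\mathbf y$. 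Because $\tilde J$ is skew--symmetric -- it is $y_3$ times the symplectic matrix $J$ -- one has $\mathbf v\cdot\tilde J\mathbf v=0$ pointwise, whence $H(\nu_{(t)})\equiv H(\nu_0)$.
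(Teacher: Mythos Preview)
The paper does not prove this theorem: it is quoted in the Appendix, without proof, as the main result of Ambrosio and Gangbo \cite{ambgangbo}, and then invoked as a black box in the proof of Theorem \ref{new5.5}. So there is no ``paper's own proof'' to compare against. Your proposal is, in outline, precisely the strategy of \cite{ambgangbo}: an explicit--Euler discretisation in $(\mathcal{P}_{ac}^2,W_2)$, a discrete Gr\"onwall bound from (H1) that explains why (\ref{maxtime}) is the correct confinement condition, equi--Lipschitz compactness via Ascoli--Arzel\`a, identification of the limiting velocity through (H2), and conservation of $H$ under (H3) via the skew--symmetry $\mathbf v\cdot\tilde J\mathbf v=0$.

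Two places in your sketch are genuinely rough and would not pass as written. First, the assertion that $\identity+h\,\tilde J\mathbf v^N_k$ ``is injective with Jacobian $1+O(h)$'' is unjustified: $\mathbf v^N_k=\partial_0H(\nu^N_k)$ is a priori only an $L^2(\nu^N_k)$ vector field, with no pointwise differentiability, so neither injectivity nor a Jacobian is available. In \cite{ambgangbo} the preservation of absolute continuity and the density bounds (\ref{bound1})--(\ref{bound2}) are obtained not by a pointwise Jacobian computation but by approximating the frozen velocity by smooth fields, using the linear growth bound of (H1) to control the flow map of the associated ODE, and passing to the limit; your telescoping $(1+C\tau/N)^{3N}$ heuristic is the right caricature, but the object being bounded is the Jacobian of an approximating ODE flow, not of the bare push--forward. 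Second, your final line --- that ``uniqueness of the minimal--norm element $\partial_0H(\nu_t)$ shows that the whole sequence (not merely a subsequence) converges'' --- is incorrect: uniqueness of $\partial_0H$ at a given limit curve does not preclude the existence of several limit curves. The theorem in \cite{ambgangbo}, and as stated here, asserts only existence of a Hamiltonian flow, not its uniqueness, so there is no need (and no available argument) to upgrade subsequential to full convergence.
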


\begin{remark}
Existence of $R_0$ in the global time condition (\ref{maxtime}) is guaranteed by the fact that $\nu $ is compactly supported. This is a crucial property for all our applications. 
\end{remark}

\end{document}